\DeclareFontFamily{U}{wncy}{}
\DeclareFontShape{U}{wncy}{m}{n}{<->wncyr10}{}
\DeclareSymbolFont{mcy}{U}{wncy}{m}{n}
\DeclareMathSymbol{\sha}{\mathord}{mcy}{"58}
\theoremstyle{plain}
\newtheorem{theoremintro}{Theorem}
\newtheorem{theorem}{Theorem}[section]
\newtheorem{proposition}[theorem]{Proposition}
\newtheorem{lemma}[theorem]{Lemma}
\theoremstyle{definition}
\newtheorem{definition}[theorem]{Definition}
\theoremstyle{remark}
\newtheorem{remark}[theorem]{Remark}
\numberwithin{equation}{section}
\DeclareMathOperator{\Pic}{Pic}
\DeclareMathOperator{\Gal}{Gal}
\DeclareMathOperator{\Hom}{Hom}
\DeclareMathOperator{\Coker}{Coker}
\DeclareMathOperator{\Bl}{Bl}
\DeclareMathOperator{\Cl}{Cl}
\DeclareMathOperator{\Br}{Br}
\DeclareMathOperator{\Proj}{Proj}
\DeclareMathOperator{\Sing}{Sing}
\DeclareMathOperator{\Tr}{Tr}
\DeclareMathOperator{\irr}{irr}
\DeclareMathOperator{\eff}{eff}
\DeclareMathOperator{\Stab}{Stab}
\DeclareMathOperator{\Id}{Id}
\DeclareMathOperator{\tors}{tors}
\DeclareMathOperator{\ann}{ann}
\DeclareMathOperator{\res}{res}
\DeclareMathOperator{\Mat}{Mat}
\def\F{\mathbf{F}}
\def\P{\mathbf{P}}
\def\I{\mathbf{I}}
\def\E{\mathbf{E}}
\def\Z{\mathbf{Z}}
\def\A{\mathbf{A}}
\def\B{\mathbf{B}}
\def\D{\mathbf{D}}
\def\Q{\mathbf{Q}}
\def\C{\mathbf{C}}
\def\W{\mathrm{W}}
\def\R{\mathrm{R}}
\def\FFF{\mathcal{F}}
\def\O{\mathcal{O}}
\def\Ccal{\mathcal{C}}
\def\Dcal{\mathcal{D}}
\def\N{\mathcal{N}}
\def\S{\mathfrak{S}}
\def\RR{\mathscr{R}}
\def\CC{\mathscr{C}}
\title{Classification of singular del Pezzo surfaces over finite fields}
\author[R. Blache]{Régis Blache}
\address{LAMIA, Université des Antilles}
\email{regis.blache@univ-antilles.fr}
\author[E. Hallouin]{Emmanuel Hallouin}
\address{Institut de Math\'ematiques de Toulouse, UMR 5219}
\email{hallouin@univ-tlse2.fr}
\date{\today} 
\subjclass[2020]{Primary 11G25, 14J26; Secondary 14G10, 11E12}
\keywords{del Pezzo surfaces over finite fields, zeta functions, quadratic modules}
\begin{document}

\begin{abstract}
In this article, we consider weak del Pezzo surfaces defined over a finite field, and their associated, singular, anticanonical models. 

We first define arithmetic types for such surfaces, by considering the Frobenius actions on their Picard groups; this extends the classification of Swinnerton-Dyer and Manin for ordinary del Pezzo surfaces. We also show that some invariants of the surfaces only depend on the above type.

Then we study an inverse Galois problem for singular del Pezzo surfaces having degree $3\leq d\leq 6$: we describe which types can occur over a given finite field (of odd characteristic when $3\leq d\leq 4$).
\end{abstract}

\maketitle

\tableofcontents

\section*{Introduction}

In this article, we study certain del Pezzo surfaces defined over a finite field. Recall that a smooth projective surface $X$ is a \emph{weak del Pezzo surface} when its anticanonical divisor $-K_X$ is big and nef; its \emph{degree} is the self-intersection number $d:=K_X^{\cdot2}$. If moreover $-K_X$ is ample, we call $X$ an \emph{ordinary} del Pezzo surface. When a weak del Pezzo surface $X$ is not ordinary, it contains absolutely irreducible curves with self intersection $-2$, that we shall call $(-2)$-curves in the following. 

The anticanonical model of a weak, non ordinary del Pezzo surface $X$ is a singular surface, that we denote by $X_s$, and call a \emph{singular} del Pezzo surface. Note that $X_s$ has Du Val singularities, and is Gorenstein; for these reasons such surfaces are sometimes called \emph{Du Val del Pezzo} or \emph{Gorenstein del Pezzo} in the literature. 

The study of complex singular cubic surfaces (degree $3$ del Pezzo surfaces over $\C$) dates back to the nineteenth century \cite{cay,sch}. It has been generalized to del Pezzo surfaces along the twentieth century \cite{duv}. In particular we know all types of singularities (sometimes called the Dynkin types) that can occur in characteristic zero \cite[Chapter 8]{dolga}. Over an algebraically closed field of positive characteristic, some new types occur, but only in characteristic $2$ \cite{kn2}.

The interest on del Pezzo surfaces over finite fields is more recent. If $X$ is an ordinary del Pezzo surface defined over the finite field $\F_q$, where $q=p^m$ is a power of a prime, the Frobenius action $\sigma^\ast$ on $\Pic(X\otimes\overline{\F}_q)$ must preserve the anticanonical class and the intersection product. The group of automorphisms of $\Pic(X\otimes\overline{\F}_q)$ with these properties is a (finite) Weyl group depending on the degree of the surface. Thus the image of the Galois group $\Gal(\overline{\F}_q/\F_q)$ is a cyclic subgroup generated by the image of the Frobenius morphism $\sigma$; its conjugacy class is the \emph{arithmetic type} of $X$. Swinnerton-Dyer \cite{swi} and Manin \cite{manin} construct tables of conjugacy classes in these Weyl groups in order to classify ordinary del Pezzo surfaces over finite fields (the table for degree $3$ has been corrected in \cite{bfl}).

Many invariants of a del Pezzo surface only depend on its arithmetic type; this is the case for its zeta function \cite[Theorem 27.1, Corollary 27.1.2]{manin}

\begin{equation}
\label{zeta}
Z(X,T)^{-1}=(1-T)(1-q^2T)\det\left(\I-qT\sigma^\ast |\Pic(X\otimes\overline{k})\right)
\end{equation}

The first aim of this paper is to extend the classification of Swinnerton-Dyer and Manin to weak del Pezzo surfaces, by defining their \emph{arithmetic type}, and to give an expression for their zeta functions. 

We begin by classifying the possible singularities over the algebraic closure. Following \cite {ct,der}, we define a \emph{geometric type}. This is the configuration of negative curves on the surface $X\otimes \overline{k}$ (corresponding to lines and singularities on the anticanonical model), up to the action of the Weyl group. It is finer than the Dynkin type. The types are orbits of the Weyl action on the root bases in the lattice $\E_{9-d}$ corresponding to the degree $d$.

Coming back to the surface $X$, the Galois action must preserve the set $\R_{\irr}$ of $(-2)$-curves. Thus the Frobenius maps to an element of its stabilizer $\Stab(\R_{\irr})$. We define the \emph{arithmetic type} of $X$ as the conjugacy class of the Frobenius action in this last group.  

Here again, many properties of a weak del Pezzo surface only depend on its arithmetic type; this is true for its zeta function from (\ref{zeta}), and we show that it remains true for its anticanonical model $X_s$ (note that its geometric Picard lattice is the orthogonal of $\R_{\irr}$ in the geometric Picard group of $X$).

\begin{theoremintro}
\label{zetasing}
We have the equality
\[
Z(X_s,T)^{-1}=(1-T)(1-q^2T)\det\left(\I-qT\sigma^\ast|\Pic(X_s\otimes\overline{k})\right)
\]
\end{theoremintro}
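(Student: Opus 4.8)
The plan is to compare $X$ with $X_s$ through the birational morphism $\pi\colon X\to X_s$ contracting the $(-2)$-curves (it is defined over $\F_q$ since $X$ and $K_X$ are), and to deduce $Z(X_s,T)$ from the known $Z(X,T)$ of $(\ref{zeta})$ by controlling the fibres over the singular points. Put $S:=\Sing(X_s\otimes\overline{k})$, the Frobenius-stable finite set of singular points, and let $E:=\pi^{-1}(S)$ be the reduced exceptional locus, whose irreducible components over $\overline{k}$ are exactly the $(-2)$-curves. Since $\pi$ induces an isomorphism $X\setminus E\xrightarrow{\sim}X_s\setminus S$, counting points over each finite extension gives, for all $n\geq1$,
\[
\#X_s(\F_{q^n})=\#X(\F_{q^n})-\bigl(\#E(\F_{q^n})-\#S(\F_{q^n})\bigr).
\]

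Next I would evaluate the right-hand side. From $(\ref{zeta})$ one reads off $\#X(\F_{q^n})=1+q^{2n}+q^{n}\Tr\bigl((\sigma^{\ast})^{n}\mid\Pic(X\otimes\overline{k})\bigr)$. For the correction term, over $\overline{k}$ the scheme $E$ is a disjoint union of connected pieces $E_P$, one over each $P\in S$, and each $E_P$ is the exceptional fibre of a Du Val singularity, hence a transverse configuration of $\mathbf{P}^1$'s whose dual graph is an $ADE$ Dynkin diagram --- in particular a tree. From the normalisation sequence of such a curve one obtains $H^{1}(E\otimes\overline{k},\Q_\ell)=0$ (the total dual graph is a forest), while $H^{0}(E\otimes\overline{k},\Q_\ell)$ is the permutation module on $\{E_P\}_{P\in S}$ and $H^{2}(E\otimes\overline{k},\Q_\ell)$ is the permutation module on the components of $E$, with a Tate twist $\Q_\ell(-1)$. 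As $\pi$ is defined over $\F_q$, Frobenius permutes the $E_P$ as it permutes $S$, and it permutes the components of $E$ as it permutes the $(-2)$-curves; since the latter are linearly independent (their intersection matrix is block-diagonal with blocks equal to minus the relevant Cartan matrices, hence nondegenerate), they form a $\Z$-basis of the lattice $\R_{\irr}$, so $H^{2}(E\otimes\overline{k},\Q_\ell)\cong\R_{\irr}\otimes\Q_\ell(-1)$ as Galois modules. The Grothendieck--Lefschetz trace formula applied to $E$ and to $S$ then yields
\[
\#E(\F_{q^n})-\#S(\F_{q^n})=\Tr\bigl((\sigma^{\ast})^{n}\mid H^{2}(E\otimes\overline{k},\Q_\ell)\bigr)=q^{n}\Tr\bigl((\sigma^{\ast})^{n}\mid\R_{\irr}\bigr).
\]

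Substituting, and using that $\R_{\irr}\otimes\Q$ is negative definite, hence a nondegenerate Frobenius-stable subspace of $\Pic(X\otimes\overline{k})\otimes\Q$, so that $\Pic(X\otimes\overline{k})\otimes\Q=(\R_{\irr}\otimes\Q)\oplus(\R_{\irr}^{\perp}\otimes\Q)$ as Galois modules, I would get
\[
\#X_s(\F_{q^n})=1+q^{2n}+q^{n}\Tr\bigl((\sigma^{\ast})^{n}\mid\R_{\irr}^{\perp}\bigr)=1+q^{2n}+q^{n}\Tr\bigl((\sigma^{\ast})^{n}\mid\Pic(X_s\otimes\overline{k})\bigr),
\]
the last step because the geometric Picard lattice of $X_s$ is $\R_{\irr}^{\perp}$ inside $\Pic(X\otimes\overline{k})$. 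Forming $Z(X_s,T)=\exp\bigl(\sum_{n\geq1}\#X_s(\F_{q^n})\,T^{n}/n\bigr)$ turns the summands $1$, $q^{2n}$ and $q^{n}\Tr((\sigma^{\ast})^{n}\mid\Pic(X_s\otimes\overline{k}))$ into the factors $(1-T)^{-1}$, $(1-q^{2}T)^{-1}$ and $\det\bigl(\I-qT\sigma^{\ast}\mid\Pic(X_s\otimes\overline{k})\bigr)^{-1}$ respectively, which gives the claimed identity.

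I expect the only genuine work to be the middle step: identifying $H^{\ast}(E\otimes\overline{k},\Q_\ell)$ with its combinatorial model \emph{as a Galois module}. This needs the standard description of Du Val exceptional fibres as trees of $(-2)$-curves (so that $H^{1}$ vanishes and $H^{2}$ detects only the components), the linear independence of the $(-2)$-curves (so that the permutation module on them is literally $\R_{\irr}$), and the compatibility of everything with the $\F_q$-structure, namely that $\pi$, $S$ and $E$ are defined over $\F_q$ and Frobenius acts throughout via its action on the finite set of $(-2)$-curves. The remainder is the Grothendieck--Lefschetz trace formula together with the formal identity $\exp\bigl(\sum_{n}\Tr(A^{n})T^{n}/n\bigr)=\det(\I-TA)^{-1}$.
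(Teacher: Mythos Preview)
Your proof is correct and follows the same overall architecture as the paper's: excision along the contraction $X\to X_s$ to reduce to computing $\#E(\F_{q^n})-\#S(\F_{q^n})$, then identifying this correction with $q^n\Tr\bigl((\sigma^\ast)^n\mid\RR\bigr)$, and finally splitting off $\RR$ from $\Pic(X\otimes\overline{k})$ to leave $\Pic(X_s\otimes\overline{k})$.

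The execution of the two middle steps differs, however. For the correction term, the paper gives an elementary hands-on count (its Lemma preceding the theorem): for each Frobenius-stable connected component $A_x$ of the exceptional locus it shows, by inclusion--exclusion on the tree of $\P^1$'s and the observation that the Frobenius-fixed subgraph of a Dynkin tree is connected, that $\#A_x(\F_{q^n})=1+N_{nx}q^n$ with $N_{nx}$ the number of components defined over $\F_{q^n}$. You instead invoke the Grothendieck--Lefschetz trace formula together with the cohomological identification $H^1(E\otimes\overline{k},\Q_\ell)=0$, $H^2\cong\RR\otimes\Q_\ell(-1)$ coming from the normalisation sequence and the forest structure of the dual graph. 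Both routes use the same combinatorial input (the dual graph is a forest); yours packages it cohomologically, the paper's stays at the level of point-counting.

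For the final splitting, the paper tensors the exact sequence $0\to\Pic(X_s\otimes\overline{k})\to\Pic(X\otimes\overline{k})\to\RR^\vee$ with $\C$ and then argues that $\sigma^\ast$ on $\RR^\vee$ and on $\RR$ have the same characteristic polynomial (via the permutation-matrix description). Your use of the orthogonal decomposition $\Pic(X\otimes\overline{k})\otimes\Q=(\RR\otimes\Q)\oplus(\RR^\perp\otimes\Q)$, available because the intersection form is negative definite on $\RR$, is cleaner and bypasses that dualisation step. One notational point: what you write as $\R_{\irr}$ in the trace formulas is the \emph{lattice} generated by the $(-2)$-curves, which the paper denotes $\RR$; $\R_{\irr}$ itself is only the finite set of $(-2)$-curves.
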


Our second aim is to construct such surfaces. When the finite field is small, not all are constructible \cite{trepa}. For instance, over the field $\F_2$, there are no split ordinary del Pezzo surfaces of degree $d\leq 4$ since there are at most four points in general position in $\P^2(\F_2)$.

This is the inverse Galois problem for singular del Pezzo surfaces over finite fields: in the ordinary case, it asks for which conjugacy classes of the Weyl group can arise as the conjugacy class of the Frobenius action. It has been solved for ordinary degree four surfaces in \cite[Theorem 1.4]{trepa}, and for ordinary degree three and two surfaces in \cite{lougtrep}. There is also a weaker version of this problem, asking for which integers can arise as the trace of the Frobenius action. It has been solved for ordinary del Pezzo surfaces, see \cite{bfl}.

It should be easier for non ordinary del Pezzo surfaces since when we consider them as blowups of the projective plane, we relax the condition of blowing up points in general position to almost general position. But when the degree is at most four and the base field is not algebraically closed, some of the surfaces are no longer birational to the projective plane, and we have to provides other constructions.

In this paper we solve the inverse Galois problem for singular del Pezzo surfaces of degree at least $5$ over any finite field, and for surfaces of degree $3$ or $4$ when the characteristic is odd. We refer to Appendix \ref{gt} for the tables of arithmetic types for each degree $3\leq d \leq 6$.

\begin{theoremintro}
\label{alltypes} 
There exists a weak, non ordinary del Pezzo surface of degree $d$ and any arithmetic type $\mathfrak{T}$ over the finite field $\F_q$ in the following cases
\begin{itemize}
	\item[1.] \label{alltypesdeg5} we have $d\geq 5$;
	\item[2.] \label{alltypesdeg4} we have $d=4$ and $\F_q$ has odd characteristic.
	\item[3.] \label{alltypesdeg3} we have $d=3$, $\F_q$ has odd characteristic and $(q,\mathfrak{T})\notin \{(3,1),(3,12)\}$.
\end{itemize}

There does not exist any surface of degree $3$ and arithmétic type $1$ or $12$ over $\F_3$.

\end{theoremintro}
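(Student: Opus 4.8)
The plan is to prove the existence statements by explicit geometric constructions, treated separately for each degree, and to settle the two non-existence statements by a direct combinatorial analysis of point configurations over $\F_3$. In every case one reads off from the tables of Appendix~\ref{gt} the geometric type --- the configuration of negative curves --- and the conjugacy class of $\sigma^\ast$ attached to the arithmetic type $\mathfrak{T}$, and one must produce a surface carrying exactly that datum.

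\textbf{Degrees $d\ge 5$.} Here a weak, non-ordinary del Pezzo surface of degree $d$ is either the Hirzebruch surface $\Sigma_2$ (when $d=8$) or the blow-up of $\P^2$ at $9-d\in\{2,3,4\}$ points in almost general position (when $5\le d\le 7$), and since $\Br(\F_q)=0$ no nontrivial twist of the minimal model occurs. So for each type it suffices to exhibit over the prescribed $\F_q$ a Galois-stable configuration of points of $\P^2$ --- with the orbit sizes, collinearities and infinitely near relations prescribed by the geometric type --- lying in almost general position and with $\sigma^\ast$ inducing the required permutation of the negative curves. Since almost general position is far less restrictive than general position, neither the number of points of $\P^2(\F_q)$ nor that of the exceptional lines above infinitely near points is ever an obstruction here, even over $\F_2$; one checks this type by type from the tables.

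\textbf{Degrees $d=3,4$ (odd characteristic).} For $d=4$ we realize $X$ as the base locus of a pencil of quadrics in $\P^4$; in odd characteristic such a pencil is encoded by a quadratic module over $\F_q$, roughly a binary form whose roots record the degenerate members of the pencil together with the associated quadratic forms and their Witt invariants, and the arithmetic type of $X$ corresponds to the Galois action on this datum. The problem thus becomes the realization of every admissible quadratic module by an explicit pencil. For $d=3$ we blow up $\P^2$ at a Galois-stable sextuple in almost general position whenever the type permits one, and for the remaining types, where $X$ need not be $\F_q$-rational, we provide ad hoc models --- conic bundle presentations, linear projections, explicit cubic equations --- and verify the Frobenius action. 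In both degrees the real content is completeness: confirming that every entry of the tables is attained, and, for $d=4$, arranging the quadratic-module dictionary so that characteristic $2$ is genuinely the only case that escapes it. I expect this bookkeeping, rather than any single construction, to be the principal difficulty.

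\textbf{Non-existence over $\F_3$.} For these two degree $3$ types one shows by a finite case analysis that no admissible model exists over $\F_3$. Take type $1$: since $\sigma^\ast$ acts trivially on the geometric Picard group (type $1$ is the split class), any surface realizing it is a blow-up of $\P^2$ over $\F_3$, because contracting six pairwise disjoint $\F_3$-rational $(-1)$-curves yields a Severi-Brauer surface with a rational point, hence $\P^2$ (as $\Br(\F_3)=0$); thus one needs six points of $\P^2(\F_3)$ in almost general position realizing the unique geometric type with a single $A_1$ singularity. But in every presentation those six points contain either five points in general position or six points on a smooth conic, and neither occurs over $\F_3$, since a plane arc over $\F_3$ has at most $q+1=4$ points while a smooth conic over $\F_3$ has exactly $4$ points. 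Type $12$ is excluded by the same kind of finite check over $\F_3$, again governed by the scarcity of points on lines and conics. This proves the last assertion.
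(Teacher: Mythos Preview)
Your outline has the right overall shape, and for $d=4$ your sketch of the quadratic-module approach matches the paper's. But there is a genuine gap in your treatment of $d\ge 5$, and your non-existence argument for type $12$ is not actually carried out.

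\textbf{The gap for $d\ge 5$.} You assert that, since $\Br(\F_q)=0$, every such surface arises over $\F_q$ as a blow-up of $\P^2$ at a Galois-stable configuration of points, so that exhibiting the right configuration suffices. This is false. If $X\to\P^2$ is a blow-up morphism defined over $\F_q$, then Frobenius fixes the hyperplane class $E_0$ and permutes the $E_i$, so $\sigma^\ast$ lands in the subgroup $\S_{9-d}\subset W(\E_{9-d})$. Some arithmetic types are not $W$-conjugate to any element of $\S_{9-d}$: for degree $6$, geometric type $\A_1(4)$, arithmetic type $2$, the Frobenius class is $s_{E_0-E_1-E_2-E_3}$, which lies in the nontrivial coset of $\S_3$ inside $W(\A_2\times\A_1)\cong\S_3\times\Z/2\Z$ and so cannot be conjugated into $\S_3$. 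Hence no $\F_q$-morphism $X\to\P^2$ exists for this type, and no configuration of points in $\P^2$ will produce it. The paper handles these cases by first blowing up $\P^2$ to a degree $5$ surface and then contracting a suitable $\F_q$-rational exceptional curve (see the constructions of degree $6$ types $2$ and $8$ in Section~\ref{sec3}).

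\textbf{Non-existence over $\F_3$.} Your arc-bound argument for type $1$ works once one checks that every blow-down presentation of a split $\A_1$ cubic forces five $\F_3$-points in general position or six on a smooth conic. But ``the same kind of finite check'' is not a proof for type $12$: the split $2\A_1$ admits several inequivalent blow-down configurations (two infinitely-near pairs; an infinitely-near pair and a collinear triple; two collinear triples sharing a point; etc.), and some of these are not excluded by arc bounds alone---one must, for instance, verify that the secant lines through the chosen points already cover $\P^2(\F_3)$. The paper's approach is different and uniform for both types: contract an $\F_3$-rational exceptional curve disjoint from the $(-2)$-curves to land on a degree $4$ surface (of type $1$, respectively of type $11$ or $16$) carrying a rational point off every negative curve, and then read off from the explicit point count that the number of such points vanishes at $q=3$.
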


The types corresponding to the degrees $d\geq 5$ are not very difficult to construct. It turns out that these surfaces are birational to $\P^2$, and it is sufficient to blowup the projective plane at well chosen points, and to contract exceptional curves. 

In the case of degree four, we no longer consider a blowup model. We exploit an idea which is present in \cite{BBFL,fly,Skoro_Del4}. The anticanonical model of a del Pezzo surface a degree four is the base locus of a pencil of quadrics in $\P^4$. To such a pencil we associate a quadratic $\F_q|T]$-module (equivalently, a Frobenius algebra). Now fixing a geometric type, then an arithmetic type for a del Pezzo surface, gives a precise description of the quadratic module. We construct such modules with prescribed arithmetic properties, and their existence is sufficient to prove the second assertion of the above theorem. Note that these quadratic modules are rather different in even characteristic, since there a bilinear form over an odd dimensionnal vector space must be degenerate. This is why we do not consider that case here. However, nice normal forms have been described in this case \cite{dd2}, that should help solving this problem.

Note that we have used the mathematical software {\tt magma} to construct explicit models (ie a couple of quadrics in $\P^4$) for all types of degree four singular del Pezzo surfaces over a given finite field. The code is freely available on the second author's webpage.

Finally, we construct degree three surfaces in different ways. Our main construction is by blowing up a point -- not lying on any negative curve -- on a well chosen degree four del Pezzo surface. We count the numbers of such points for each arithmetic type belonging to degree four; this allows us to show the existence of many degree three surfaces with given arithmetic type, but also -- when there is no such point -- the non existence result stated in the last sentence of the above Theorem. We also use other types of blow up (of the projective plane, or a degree four surface at a point lying on one or more exceptional curve) in order to construct the surfaces belonging to the remaining arithmetic types.

The paper is organised as follows: in section \ref{sec1}, we describe weak del Pezzo surfaces, and recall their principal properties. This allows us to introduce the classification and to define the different types (geometric, then arithmetic) that we use in the rest of the article. Then we turn to the description of singular del Pezzo surfaces; we briefly describe their different groups of divisors, and we show Theorem \ref{zetasing} in section \ref{sec2}. The next section is devoted to the proof of the first assertion in Theorem \ref{alltypes}. The fourth section is more technical: we treat the degree four del Pezzo surfaces over finite fields of odd characteristic; to such a variety, we associate a quadratic $\F_q|T]$-module. Then most of the work is devoted to describing the link between the arithmetic properties of the module and the arithmetic type of the surface; this allows us to prove the second assertion of Theorem \ref{alltypes}. The last section mainly builds on the preceding one: we blow up the degree four surfaces at well chosen points in order to construct degree three surfaces. We also use some direct constructions; this allows us to prove the last two assertions of Theorem \ref{alltypes}. 

We give the description of the different arithmetic types for degree $d\geq 3$ in the Appendix.

\section{Weak del Pezzo surfaces}
\label{sec1}

We first define the smooth surfaces we shall consider in this article

\begin{definition}
A smooth projective surface $X$ defined over a field $k$ is a \emph{weak del Pezzo surface} when its anticanonical divisor $-K_X$ is
\begin{itemize}
	\item[(i)] big, i.e. $K_X^{\cdot 2}>0$;
	\item[(ii)] and nef, i.e. for any effective divisor $D$ on $X$, $(-K_X)\cdot D\geq 0$.
\end{itemize}

It is an \emph{ordinary del Pezzo surface} when $-K_X$ is ample. Its \emph{degree} is $d:=K_X^{\cdot 2}$.
\end{definition}

Note that since $-K_X$ is nef, the adjunction formula ensures that for any absolutely irreducible curve $C$ on $X$, we have $C\cdot C\geq C\cdot(C+K_X)=2p_a(C)-2\geq -2$. Moreover, if for such a curve this inequality is an equality, then we have $C\cdot K_X=0$, and from the Nakai-Moishezon criterion the anticanonical divisor is not ample: the surface $X$ is a not an ordinary del Pezzo surface.

It is well known \cite{dem,ct} that the geometry of del Pezzo surfaces depends to a large extent of its absolutely irreducible curves with negative self-intersection, the so-called \emph{negative curves}. These are the generalization of the celebrated $27$ lines on an smooth cubic hypersurface in $\P^3$.

\begin{definition}
Let $X$ denote a weak del Pezzo surface over the field $k$.

An element $D$ in the geometric Picard group $\Pic(X\otimes\overline{k})$ is an \emph{exceptional divisor} when $D^{\cdot 2}=D\cdot K_X=-1$.

An absolutely irreducible curve $C$ on $X$ whose class is an exceptional divisor is an \emph{exceptional curve}.

An element $D$ in $\Pic(X\otimes\overline{k})$ is a \emph{root} when it satisfies $D^{\cdot 2}=-2$ and $D\cdot K_X=0$. We denote by $\R(X)$ the set of roots.

When $C$ is a curve on $X$ whose class is a root, we say that $C$ (or its class) is an \emph{effective root}. If moreover $C$ is absolutely irreducible, then it is a \emph{$(-2)$-curve}.

We denote by $A$ the union of the $(-2)$-curves on $X$; this is a closed subscheme of $X$. We denote by $U$ the complementary of $A$ on $X$.
\end{definition}

\begin{remark}
First remark that the negative curves which are absolutely irreducible are isomorphic to $\P^1$ from the adjunction formula.

The sets of exceptional divisors and roots are finite and depend up to isomorphism only on the degree of the surface \cite[II. Tables 2 et 3]{dem}.

Note also (contrary to the case of ordinary del Pezzo surfaces) that all exceptional divisors need not correspond to exceptional curves; see Lemma \ref{cbeexc} for a numerical criterion.

Finally, the sets of exceptional and $(-2)$-curves will be crucial in this article since they determine (up to an isomorphism) the geometric type.
\end{remark}

We first give a construction of weak del Pezzo surfaces as blow-ups of the projective plane.

\subsection{A blow-up model}

We assume $k$ algebraically closed in this section.

\begin{definition}

We denote by $X(\Sigma)$ the surface obtained from the projective plane by successively blowing up the points in $\Sigma:=\{p_1,\ldots,p_{r}\}$
\[
\pi : X(\Sigma)\stackrel{\Bl_{p_{r}}}{\longrightarrow} X_{r}\rightarrow \ldots \rightarrow X_2 \stackrel{\Bl_{p_1}}{\longrightarrow} X_1=\P^2
\]
where each $p_i$, $1\leq i\leq r$ is a closed point in the surface $X_{i}$.

For $1\leq i\leq r$, we denote by $E_i$ the total transform in $X(\Sigma)$ of the exceptional divisor of the blowing-up $\Bl_{p_{i}} : X_{i+1}\rightarrow X_i$, and we write $p_{i}\prec p_{i+1}$ when $p_{i+1}$ is \emph{infinitely near to} $p_i$, ie when it lies on the exceptional divisor of $\Bl_{p_{i}}$ in $X_{i+1}$.
\end{definition}

The Picard lattice of the surface $X(\Sigma)$ is the group generated by $E_0:=\pi^\ast L$, the total transform of the class $L$ of a line in $\P^2$, and the $E_i$, $1\leq i\leq r$
\[
\Pic(X(\Sigma))=\Z^{r+1}=\Z E_0+\Z E_1+\cdots+\Z E_r
\]
endowed with the intersection product given by $E_0^{\cdot 2}=1$, $E_i^{\cdot 2}=-1$ for $1\leq i\leq r$ and $E_i\cdot E_j=0$ for $i\neq j$. The canonical class is given by
\[
K_{X(\Sigma)} = -3E_0+E_1+\cdots+E_r
\]
From \cite[III. Theorem 1]{dem}, the surface $X(\Sigma)$ is a weak del Pezzo surface of degree $d=9-r$ if, and only if $r\leq 8$, and the points in $\Sigma$ are in \emph{almost general position}: at each stage, the point $p_i$ does not lie on a $(-2)$-curve on $X_i$.

The converse statement is almost true; actually we have the following description of weak del Pezzo surfaces \cite[Proposition 0.4]{ct}

\begin{proposition}
\label{class1}
Let $X$ denote a weak del Pezzo surface of degree $d$ over an algebraically closed field $k$. Then we have $1\leq d\leq 9$, and if we set $r=9-d$, we must have one of the following
\begin{itemize}
  \item[(i)] $r=1$ and $X\simeq \P^1\times\P^1$;
	\item[(ii)] $r=1$ and $X\simeq F_2$ the Hirzebruch surface;
	\item[(iii)] $0\leq r\leq 8$, and $X\simeq X(\Sigma)$, where $\Sigma:=\{p_1,\ldots,p_r\}$ consists of points in almost general position.
\end{itemize}
\end{proposition}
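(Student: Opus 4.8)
The plan is to show first that $1\leq d\leq 9$ (equivalently $0\leq r\leq 8$), and then to produce the birational description. For the bound on the degree, I would use the fact that $-K_X$ is big and nef: by Riemann--Roch and Kawamata--Viehweg vanishing (or the Nakai--Moishezon-type estimates available for weak del Pezzo surfaces), $h^0(X,-K_X)=d+1$, and the linear system $|-K_X|$ together with $K_X\cdot K_X=d>0$ forces $d\leq 9$; if $d\geq 9$ one shows $X\simeq\P^2$. The case $d=8$ splits into $X\simeq\P^1\times\P^1$, $X\simeq F_2$, or $X\simeq\Bl_p\P^2$ according to whether $X$ is minimal and, if so, which ruled surface it is; this is the source of cases (i) and (ii). So the real content is case (iii): every weak del Pezzo surface of degree $d=9-r$ with $r\geq 1$ that is not one of the two exceptions is obtained from $\P^2$ by blowing up $r$ points in almost general position.

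For case (iii), the strategy is the classical one: find a point on $X$ whose blow-down (contraction of a $(-1)$-curve) lands on a smaller weak del Pezzo surface, and induct on $r$. Concretely, I would argue that on a weak del Pezzo surface that is not $\P^2$, $\P^1\times\P^1$, or $F_2$, there exists a $(-1)$-curve $E$ (an exceptional curve in the sense defined above); this uses the structure of the effective cone and the fact that $-K_X$ is big, so it cannot be nef "too positively" unless $X$ is minimal of the excluded kinds. Contracting $E$ via Castelnuovo's criterion gives a smooth surface $X'$ with $K_{X'}^2 = d+1$; one then checks $-K_{X'}$ is again big and nef (nefness is the delicate point — one must verify no new negative-intersection phenomenon with $-K_{X'}$ is introduced by the contraction, which follows because curves on $X'$ pull back to curves on $X$ meeting $E$ in a controlled way). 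By induction $X'\simeq X(\Sigma')$ for some $\Sigma'$ of $r-1$ points in almost general position (or $X'$ is $\P^2$ if $r-1=0$), and adding back the point $p_r$ that $E$ contracts to gives $X\simeq X(\Sigma)$. Finally one verifies that the resulting configuration $\Sigma=\{p_1,\ldots,p_r\}$ is in almost general position: this is exactly the condition that no $p_i$ lies on a $(-2)$-curve of $X_i$, which in turn is forced by nefness of $-K_X$ (a point in special position would create an effective divisor with negative anticanonical degree, or a curve of self-intersection $\leq -3$, contradicting the adjunction bound recorded just before the statement).

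The main obstacle I anticipate is twofold. First, guaranteeing the existence of a contractible $(-1)$-curve on every non-minimal weak del Pezzo surface: unlike the ordinary case, exceptional \emph{divisors} need not be classes of exceptional \emph{curves} (as the remark after the definition warns, cf. Lemma \ref{cbeexc}), so one cannot simply pick any exceptional class; one must show at least one exceptional class is effective and irreducible, which requires a careful analysis of how the $(-2)$-curves sit inside the effective cone. Second, and relatedly, propagating "almost general position" through the induction: after contracting $E$ one must rule out that the image point lies on a $(-2)$-curve of the smaller surface, and conversely that blowing up does not destroy the weak del Pezzo property — both hinge on the same adjunction inequality $C^2\geq -2$ for irreducible curves together with nefness of $-K_X$. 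Since the statement is quoted from \cite[Proposition 0.4]{ct}, in the paper itself this is presumably invoked rather than reproved, but the argument above is the shape a self-contained proof would take.
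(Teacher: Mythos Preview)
The paper does not give a proof of this proposition; it simply cites \cite[Proposition~0.4]{ct} (and implicitly the underlying work of Demazure~\cite{dem}). Your outline is the standard argument and is essentially correct, and you yourself note that the paper invokes the result rather than reproving it.

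One comment on your anticipated obstacles: you are overcomplicating the existence of a contractible $(-1)$-curve. The clean route is not via the effective cone or via distinguishing exceptional divisors from exceptional curves, but via rationality. A weak del Pezzo surface has $q=p_g=0$ (Kodaira/Kawamata--Viehweg vanishing for the big and nef $-K_X$), so Castelnuovo's criterion gives that $X$ is rational. The minimal rational surfaces over an algebraically closed field are classified: they are $\P^2$ and the Hirzebruch surfaces $F_n$, $n\neq 1$. Nefness of $-K_X$ then forces $n\leq 2$ (the negative section $S$ of $F_n$ satisfies $-K_{F_n}\cdot S=2-n$), so the only minimal weak del Pezzo surfaces are $\P^2$, $\P^1\times\P^1=F_0$, and $F_2$. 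Any other weak del Pezzo surface is non-minimal and hence carries a $(-1)$-curve by definition; there is no need to locate it inside the lattice of exceptional classes or to invoke Lemma~\ref{cbeexc}. The rest of your sketch---checking that $-K_{X'}$ remains big and nef after contraction (your pullback computation is fine), handling the induction step when $X'\in\{\P^1\times\P^1,F_2\}$ via an elementary transformation back to a blow-up of $\P^2$, and verifying almost general position from the absence of curves of self-intersection $\leq -3$---is correct.
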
 

\subsection{Roots, exceptional curves and geometric types}

In this section, we consider a weak del Pezzo surface $X$ of degree $d\leq 7$ over an algebraically closed field $k$, and we set $r:=9-d$ as above. Note that the description of del Pezzo surfaces of degrees $8$ and $9$ follows immediately from the preceding result.

There exists some $\Sigma:=\{p_1,\ldots,p_r\}$ consisting of points in almost general position such that $X\simeq X(\Sigma)$; this choice allows us to identify $\Pic(X)\simeq\Z^{r+1}$ as in the preceding section.

\begin{definition}
Recall that $\R(X)$ is the set of roots in $\Pic(X)$
\[
\R(X) := \{D\in \Pic(X),~D^{\cdot2}=-2,~D.K_X=0\}
\] 

We denote by $\R_{\eff}(X)\subset \R(X)$ (\emph{resp.} $\R_{\irr}(X)\subset \R(X)$) the subset of effective roots (\emph{resp.} of $(-2)$-curves) in $\Pic(X)$.

Let $\RR(X)$ denote the \emph{root module}; it is the sub-$\Z$-module of $\Pic(X)$ generated by $\R_{\irr}(X)$.
\end{definition}

It is well known \cite{dem} that the set $\R(X)$ is a root system in the orthogonal $(K_X)^\perp \otimes \Q$ of the canonical divisor $K_X$. Under the identification of $\Pic(X)$ and $\Z^{r+1}$, it is sent on the root system $\R_d$ with basis $\{E_0-E_1-E_2-E_3,E_1-E_2,\cdots,E_{r-1}-E_r\}$. We denote by $\E_{9-d}$ the intersection graph of this basis. It is the Dynkin diagram associated to the degree $d$, namely

\[
\begin{array}{c||c|c|c|c|c|c}
{\rm Degree~ } d & 6 & 5 & 4 & 3 & 2 & 1 \\
\hline
{\rm Dynkin~ diagram~ } \E_{9-d} & \A_2\times\A_1 & \A_4 & \D_5 & \E_6 & \E_7 & \E_8 \\

\end{array}
\]

The group of automorphisms of the Picard group $\Pic(X)$ preserving the canonical divisor and the intersection product is isomorphic to the Weyl group associated to $\E_{9-d}$, which we denote by $\W(\E_{9-d})$. It is generated by the reflections through the hyperplanes orthogonal to the roots, i.e. the $s_\alpha : x\mapsto x+(x\cdot\alpha) \alpha$, $\alpha\in\R_d$. 

The following result \cite[III Théorème 2]{dem} is fundamental for the geometric classification of weak del Pezzo surfaces

\begin{proposition}
\label{freeroots}
Let $X$ denote a weak del Pezzo surface of degree $d\leq 6$. Then the set $\R_{\eff}(X)\cup(-\R_{\eff}(X))$ is a closed and symmetric part of $\R(X)$.

It is a root system in the space $\RR(X)\otimes\Q$, of which the set $\R_{\irr}(X)$ forms a basis (and we call it a \emph{root basis}). 

As a consequence, the free $\Z$-module generated by $\R_{\irr}(X)$ is equal to $\RR(X)$.
\end{proposition}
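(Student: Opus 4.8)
The plan is to identify $\R_{\eff}(X)$ as the set of effective roots, realize them inside the ambient root system $\R(X) = \R_d$ sitting in $(K_X)^\perp\otimes\Q$, and show that $\R_{\eff}(X)\cup(-\R_{\eff}(X))$ is closed and symmetric. The symmetry is automatic by construction. For closedness, the key point is: if $\alpha,\beta\in\R_{\eff}(X)$ and $\alpha+\beta\in\R(X)$, then $\alpha+\beta$ is again effective. Here I would argue that effectivity is detected numerically: a root $\gamma$ is effective if and only if $\gamma\cdot H \geq 0$ for an ample class $H$ (or, equivalently, $\gamma$ lies in the effective cone), and since $\alpha,\beta$ are effective divisor classes, so is their sum; the only thing to check is that $\alpha+\beta$, being a root, is then an \emph{actual} root of the sub-system rather than something decomposable. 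Riemann–Roch on the weak del Pezzo surface $X$ gives $h^0(\gamma) - h^1(\gamma) + h^0(K_X - \gamma) = 1$ for a root $\gamma$; since $-K_X$ is nef and $\gamma\cdot K_X = 0$, one shows $h^0(K_X-\gamma)=0$, hence $h^0(\gamma)\geq 1$, i.e. every root or its negative is effective. This already shows $\R_{\eff}(X)\cup(-\R_{\eff}(X)) = \R(X)$ is the whole thing — wait, that would be too strong; the correct statement is that it is a \emph{sub}-root-system, so I must be more careful about which roots are effective.

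Let me restructure: the right approach is to use that $\R_{\eff}(X)$ is exactly the set of roots $\gamma$ with $h^0(X,\gamma) > 0$, and by the Riemann–Roch argument above, for every root $\gamma$ either $\gamma$ or $-\gamma$ is effective (but not necessarily both). So $\R_{\eff}(X)\cup(-\R_{\eff}(X))$ could still be proper if... actually it cannot, by this argument it is all of $\R(X)$. The subtlety that Proposition \ref{freeroots} is really capturing is that $\RR(X)$ — generated by the \emph{irreducible} $(-2)$-curves — may be a proper sublattice, and $\R_{\irr}(X)$ forms a basis of the root system it spans. So the heart of the proof is the second paragraph of the statement: that $\R_{\irr}(X)$ is a root basis of a root system in $\RR(X)\otimes\Q$. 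First I would show every effective root decomposes as a non-negative integer combination of $(-2)$-curves: if $\gamma$ is an effective root that is not irreducible, write its effective representative as $\sum n_i C_i$ with $C_i$ the $(-2)$-curves appearing; since $-K_X$ is nef and $\gamma\cdot K_X = 0$, each component has $C_i\cdot K_X = 0$, forcing each $C_i$ to be a $(-2)$-curve (any positive-genus or positive-self-intersection component would contribute positively). Then $\R_{\irr}(X)$ generates $\RR(X)$ as claimed, and spans the same $\Q$-vector space as $\R_{\eff}(X)\cup(-\R_{\eff}(X))$.

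It remains to see $\R_{\irr}(X)$ is a \emph{basis} (simple roots) of this sub-root-system, equivalently that it is linearly independent and that every element of the system is a non-negative or non-positive integer combination of it. Linear independence and the sign condition both follow from the observation that distinct $(-2)$-curves $C, C'$ satisfy $C\cdot C' \geq 0$ (two distinct irreducible curves meet non-negatively), so the Gram matrix of $\R_{\irr}(X)$ has $-2$ on the diagonal and non-negative entries off-diagonal — this is precisely the shape of a Cartan matrix of a (possibly reducible) root system of ADE type, and such a set inside a root system is automatically a set of simple roots for the subsystem it generates (standard structure theory: a set of roots with pairwise non-acute angles spanning a subsystem is a base for that subsystem). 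The main obstacle I anticipate is the bookkeeping in the Riemann–Roch / nef argument showing that effective roots are supported on $(-2)$-curves with the right multiplicities, and invoking cleanly the lattice-theoretic fact that a "non-acute" spanning set of roots is a base — I would cite the relevant structure theory (e.g. Bourbaki, or the argument already implicit in \cite{dem}) rather than reprove it. Everything else is a direct consequence of $-K_X$ being nef and the adjunction inequality $C\cdot C \geq -2$ recalled after the definition.
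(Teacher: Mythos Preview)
The paper does not give its own proof of this proposition: it simply cites Demazure \cite[III Th\'eor\`eme 2]{dem}. So there is no ``paper's proof'' to compare against; I will just assess the proposal on its merits.

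Your Riemann--Roch computation is wrong, and this is what sends the first half of the argument in circles. For a root $\gamma$ one has
\[
\chi(\gamma)=\chi(\O_X)+\tfrac12(\gamma^2-\gamma\cdot K_X)=1+\tfrac12(-2-0)=0,
\]
not $1$. So $h^0(\gamma)=h^1(\gamma)$ (after Serre duality kills $h^2$), and you \emph{cannot} conclude $h^0(\gamma)\geq 1$. Indeed the conclusion ``every root or its negative is effective'' is false: on an ordinary del Pezzo surface there are no $(-2)$-curves at all, so $\R_{\eff}(X)=\emptyset$ while $\R(X)$ is nonempty. You notice something is off but then repeat the same claim a few lines later before abandoning it.

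The recovered argument in your second paragraph is essentially the right one, but two points need tightening. First, to conclude that every irreducible component $C_i$ of an effective root is a $(-2)$-curve, the equality $C_i\cdot K_X=0$ alone only gives $C_i^2\geq -2$ via adjunction; you need the extra fact that $K_X^\perp$ is negative definite (this is exactly the statement that the root lattice $\E_{9-d}$ is of ADE type), which forces $C_i^2<0$, hence $C_i^2=-2$. Second, you never actually return to closedness: the case $\alpha\in\R_{\eff}$, $\beta\in -\R_{\eff}$, $\alpha+\beta\in\R(X)$ is not covered by ``sum of effective is effective''. The clean way to finish is the one you gesture at: once $\R_{\irr}(X)$ is shown to be a set of roots with pairwise non-negative inner products inside a negative-definite space, it is a base of the sub-root-system it generates, and then $\R_{\eff}(X)$ is exactly the set of positive roots of that subsystem; closedness and symmetry of $\R_{\eff}(X)\cup(-\R_{\eff}(X))$ are then immediate from standard root-system facts.
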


An immediate consequence is that since we have $\RR(X)\subset K_X^\perp$, and this last module has rank $r=9-d$, there are at most $r$ $(-2)$-curves on $X$. Their intersection graph has a strong geometric significance. It is sometimes called the Dynkin type of $X$.

We are ready to define the first, geometric part of our classification

\begin{definition}
The \emph{geometric type} of $X$ is the orbit of the image of its set of $(-2)$-curves $\R_{\irr}(X)$ under the action of $\W(\E_{9-d})$ on the set of bases for closed and symmetric parts of $\R_d$.

When $X$ is ordinary, we say it has \emph{ordinary geometric type}.
\end{definition}

Note that two isomorphisms between the lattices $\Pic(X)$ and $\Z^{r+1}$ differ by an element of the Weyl group. This is why we define the type as an orbit under the action of this group: this makes it independent of the choice of such an isomorphism, and of the blown-up points. Note also that the geometric type above is equivalent to the type from \cite[Definition 3]{der}.

The possible orbits can be deduced recursively from a theorem by Borel and de Siebenthal that classifies the closed symmetric parts of maximal rank in a root system up to the action of the Weyl group \cite[p 29]{merindol}, \cite[p 404]{dolga}. They are given, degree by degree, in \cite[Chapters 8 and 9]{dolga}. We recall them in Appendix \ref{gt} as a column in the table of arithmetic types.

For each orbit, one can choose a root basis in such a way that its intersection graph is a subgraph of the Dynkin diagram $\E_{9-d}$ when $d\geq 5$, and of the extended (or affine) Dynkin diagram $\widetilde{\E}_{9-d}$ when $3\leq d\leq 4$; this is no longer true when $d\in\{1,2\}$. 

\begin{remark}
The geometric type we have just defined is a finer invariant than the Dynkin type. For instance, if $d=6$ there are two orbits of Dynkin type $\A_1$, depending on whether the $(-2)$-curve lies on the $\A_1$ or the $\A_2$ component of the root system: surfaces in the corresponding geometric types differ by the number of exceptional curves they contain. There are also two orbits for each of the Dynkin types $2\A_1$ and $\A_3$ when $d=4$.
\end{remark} 

Note that for $d\geq 3$, the possible geometric types of del Pezzo surfaces are in bijection with the orbits given by Borel-de Siebenthal theorem. This is no longer true when $d\leq 2$: for instance, the orbits corresponding to certain types for $d=1$ or $d=2$ only occur as geometric types of del Pezzo surfaces in characteristic $2$ \cite[Remark 1.5]{kn2}.

A convenient way to represent the geometric type is to consider a new graph, containing the intersection graph of the $(-2)$-curves as a subgraph \cite{ct}. 

\begin{definition}
\label{negacurves}
We define the \emph{graph of negative curves} associated to the surface $X$ as the graph whose vertices are circles corresponding to $(-2)$-curves, and points corresponding to exceptional curves. Two vertices corresponding to curves $C$ et $C'$ are joined by $n$ edges if we have $C\cdot C'=n$.
\end{definition}

Since the Weyl group preserves the intersection product, two surfaces sharing the same geometric type have the same graph of negative curves. The converse is true \cite[Remark 4]{der}; actually the geometric type of a weak del Pezzo surface is completely determined by its degree, its Dynkin type and the number of its exceptional curves. As a consequence, this is the way we will encode it in the tables of the Appendix.

Here is a criterion for an exceptional divisor to be irreducible \cite[Corollaire au Théorème III.2]{dem}

\begin{lemma}
\label{cbeexc}
Let $D$ denote an exceptional divisor of $X$. Then $D$ is the class of an exceptional curve if, and only if we have $D\cdot R\geq 0$ for any effective root (or $(-2)$-curve) $R$.
\end{lemma}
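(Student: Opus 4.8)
The plan is to show both implications via the structure of effective divisors on a weak del Pezzo surface. For the easy direction, suppose $D$ is the class of an absolutely irreducible curve $C$; then for any $(-2)$-curve $R$ distinct from $C$ we have $C\cdot R\geq 0$ because two distinct irreducible curves meet nonnegatively, and $C=R$ is impossible since $C^{\cdot 2}=-1\neq -2$. Since every effective root is a nonnegative combination of $(-2)$-curves by Proposition \ref{freeroots} (the $(-2)$-curves form a root basis of $\R_{\eff}(X)\cup(-\R_{\eff}(X))$, so the positive roots are nonnegative integral combinations of them), the inequality $D\cdot R\geq 0$ propagates to every effective root. This gives the ``only if'' part.

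For the converse, assume $D\cdot R\geq 0$ for every $(-2)$-curve $R$; I would show $D$ is effective and then that it is irreducible. First, by Riemann--Roch on the surface $X$, using $D^{\cdot 2}=D\cdot K_X=-1$, one computes $\chi(\O_X(D))=\chi(\O_X)+\tfrac12(D^{\cdot 2}-D\cdot K_X)=1+0=1$, and since $h^2(\O_X(D))=h^0(\O_X(K_X-D))=0$ (as $(K_X-D)\cdot(-K_X)=-K_X^{\cdot2}-1<0$ while $-K_X$ is nef, so $K_X-D$ cannot be effective), we get $h^0(\O_X(D))\geq 1$, i.e. $D$ is effective. Write $D=\sum a_i C_i$ with $C_i$ the irreducible components and $a_i>0$. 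Pairing with the nef divisor $-K_X$ gives $1=-D\cdot K_X=\sum a_i(-C_i\cdot K_X)$ with each summand $\geq 0$; hence exactly one component, say $C_1$, has $-C_1\cdot K_X=1$ and $a_1=1$, and all other components $C_i$ satisfy $C_i\cdot K_X=0$, so by the adjunction remark in the text each such $C_i$ is a $(-2)$-curve. Thus $D=C_1+N$ where $N=\sum_{i\geq 2}a_iC_i$ is an effective root (a nonnegative combination of $(-2)$-curves, lying in $K_X^\perp$), and $C_1$ is an irreducible curve with $C_1\cdot K_X=-1$.

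It remains to rule out $N\neq 0$; this is the main obstacle. Intersect $D=C_1+N$ with $N$: on one hand $D\cdot N=\sum_{i\geq 2}a_i(D\cdot C_i)\geq 0$ by hypothesis; on the other hand $D\cdot N=C_1\cdot N+N^{\cdot 2}$. Here $N^{\cdot 2}<0$ because $N$ is a nonzero element of the negative-definite lattice $\RR(X)\otimes\Q$ (the root module sits inside $K_X^\perp$, on which the intersection form is negative definite since $-K_X$ is big and nef). Also $C_1\cdot N=C_1\cdot(D-C_1)=D\cdot C_1 - C_1^{\cdot2}=D\cdot C_1+1$. I would argue that $C_1$ cannot be one of the $C_i$ appearing in $N$ (it has the wrong self-intersection, $-1$ versus $-2$, and the wrong intersection with $K_X$), so $C_1$ and the components of $N$ are distinct irreducible curves, giving $C_1\cdot N\geq 0$ and hence $D\cdot C_1\geq -1$; combined with $D^{\cdot2}=-1=D\cdot C_1+D\cdot N$ and $D\cdot N\geq 0$ we get $D\cdot C_1\leq -1$, so $D\cdot C_1=-1$ and $D\cdot N=0$. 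Then $0=D\cdot N=C_1\cdot N+N^{\cdot2}$ forces $C_1\cdot N=-N^{\cdot2}>0$. Finally $D\cdot C_i\geq 0$ for all $i\geq2$ together with $D\cdot N=\sum_{i\geq2}a_i(D\cdot C_i)=0$ forces $D\cdot C_i=0$ for each $i\geq 2$; but $D\cdot C_i = C_1\cdot C_i + N\cdot C_i$, and summing $a_i(D\cdot C_i)=0$ against these yields $C_1\cdot N = -N^{\cdot 2}$ consistently, so I instead extract a contradiction from $D=C_1+N$ being the class of an effective divisor whose ``moving part'' must be irreducible: since $h^0(\O_X(D))=1$ the linear system $|D|$ is a single divisor, and $N$, being supported on $(-2)$-curves contracted by the anticanonical map, would have to appear in the fixed part; computing $h^0(\O_X(D-N))=h^0(\O_X(C_1))$ and comparing (again by Riemann--Roch, $\chi(\O_X(C_1))=1+\tfrac12(C_1^{\cdot2}-C_1\cdot K_X)=1$, with $h^0=1$) shows $h^0(\O_X(D))=h^0(\O_X(C_1))$, i.e. $N$ is in the fixed part of $|D|$, which is absurd unless $N=0$ because $D$ itself is the unique effective divisor in its class and $C_1\leq D$ componentwise forces $N$ to be the zero divisor when the two classes have the same $h^0$ only if $N$ is rigid and attached — here I would simply note that $N\neq 0$ would make $D$ reducible, contradicting nothing yet, so the cleanest finish is: $C_1\cdot(-K_X)=1$ and $C_1^{\cdot2}=-1$ already show $C_1$ is an exceptional divisor, hence $C_1\in\Pic(X)$ is a primitive exceptional class; then $D-C_1=N\in K_X^\perp$ has $N^{\cdot2}=D^{\cdot2}-2D\cdot C_1+C_1^{\cdot2}=-1-2(-1)+(-1)=0$, and a norm-zero vector in the negative-definite lattice $K_X^\perp$ is zero. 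This last computation, using $D\cdot C_1=-1$ established above, is the real endgame, and pinning down $D\cdot C_1=-1$ rigorously (rather than merely $\geq -1$) is where care is needed — it follows because $D\cdot C_1\geq 0$ is impossible: if $D\cdot C_1\geq 0$ then $D\cdot N=-1-D\cdot C_1\leq -1<0$, contradicting $D\cdot N\geq0$.
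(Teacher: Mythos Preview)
The paper does not prove this lemma; it simply cites \cite[Corollaire au Th\'eor\`eme III.2]{dem}. Your strategy (effectivity via Riemann--Roch, decomposing the effective representative against $-K_X$, then killing the $(-2)$-part via negative definiteness of $K_X^\perp$) is the standard one and is essentially sound, but the write-up has two genuine gaps.

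First, you assert that each component $C_i$ with $C_i\cdot K_X=0$ is a $(-2)$-curve ``by the adjunction remark in the text''. That remark only yields $C_i^{\cdot2}\geq -2$; to get $C_i^{\cdot2}<0$ (hence $=-2$ by parity) you must also invoke that $K_X^\perp$ is negative definite, a fact you use later but not here. Without this you are not entitled to apply the hypothesis $D\cdot R\geq 0$ to the $C_i$ when forming $D\cdot N\geq 0$.

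Second, the line $C_1\cdot N = D\cdot C_1 - C_1^{\cdot2} = D\cdot C_1+1$ uses $C_1^{\cdot2}=-1$ before it has been established, and the ``cleanest finish'' repeats this. What you actually know at that stage is only $C_1^{\cdot2}=2p_a(C_1)-1\geq -1$ from adjunction. The clean way to close is purely numerical: with $a=C_1^{\cdot2}\geq -1$, $b=C_1\cdot N\geq 0$ (distinct irreducible curves), $c=N^{\cdot2}\leq 0$, and $b+c=D\cdot N\geq 0$, the identity $a+2b+c=D^{\cdot2}=-1$ gives $a=-1-b-(b+c)\leq -1$, forcing $a=-1$, $b=0$, $c=0$, and hence $N=0$ by negative definiteness. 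Your final paragraph is reaching for exactly this, but the abandoned fixed-part digression and the premature assertion of $C_1^{\cdot2}=-1$ make the logic look circular; those should be cut and replaced by the three-line computation above.
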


We end with \cite[Lemme IV.2]{dem}, that will be useful we we decribe the fibers of the desingularization of a songular del Pezzo surface

\begin{lemma}
Each connected component $B$ of $A$ is the support of a unique fundamental cycle, which is the least effective root $C$ such that for any irreducible component $D$ of $B$ we have $C\cdot D\leq 0$.
\end{lemma}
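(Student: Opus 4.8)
The plan is to realise the fundamental cycle of $B$ as the highest root of an irreducible sub‑root‑system attached to $B$, and to read off existence, uniqueness and minimality from standard root‑system combinatorics. For degrees $d\geq 7$ the surface carries at most one $(-2)$‑curve, so $B$ is a single curve $D$ with $D\cdot D=-2<0$ and the statement is immediate; assume from now on $d\leq 6$, so that Proposition \ref{freeroots} applies.

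First I would set up the root‑theoretic dictionary. By Proposition \ref{freeroots} the set $\Phi:=\R_{\eff}(X)\cup(-\R_{\eff}(X))$ is a root system in $\RR(X)\otimes\Q$ with base $\R_{\irr}(X)=\{D_1,\dots,D_t\}$ (the set of all $(-2)$‑curves), and its connected components correspond to the connected components of $A$. If $B$ has irreducible components $D_1,\dots,D_s$, then $\Phi_B:=\Phi\cap(\Z D_1+\cdots+\Z D_s)$ is, by standard facts on root systems, an irreducible sub‑root‑system with base $D_1,\dots,D_s$, whose positive roots are exactly the effective roots supported on $B$. Since $\Phi$ has type $\E_{9-d}$, a disjoint union of $\A$‑$\D$‑$\E$ diagrams, $\Phi_B$ is of type $\A_s$, $\D_s$ or $\E_s$. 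Negating the intersection product gives a positive definite form $\langle x,y\rangle:=-x\cdot y$ on $\RR(X)\otimes\Q$ with $\langle D_i,D_i\rangle=2$ and $\langle D_i,D_j\rangle=-D_i\cdot D_j\leq 0$, so that $D_1,\dots,D_s$ is a simple system in the usual normalisation.

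Next, let $Z_B=\sum_{i=1}^{s}n_iD_i$ be the highest root of $\Phi_B$. For an irreducible root system all marks $n_i$ of the highest root satisfy $n_i\geq 1$, hence $\operatorname{supp}(Z_B)=B$; and $Z_B$ is a dominant weight, i.e. $\langle Z_B,D_i\rangle\geq 0$ for every $i$, which translates into $Z_B\cdot D_i\leq 0$ for every component $D_i$ of $B$. Thus $Z_B$ is an effective root, supported exactly on $B$, pairing non‑positively with each component of $B$, which proves existence.

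For uniqueness and minimality, let $C$ be any effective root with $C\cdot D\leq 0$ for every component $D$ of $B$. By Proposition \ref{freeroots}, $C=\sum_{D\in\R_{\irr}(X)}m_DD$ with all $m_D\geq 0$; writing $C=\sum_jC^{(j)}$ according to the connected components $B^{(j)}$ of $A$, each $C^{(j)}$ lies in a negative definite even lattice, so $C^{(j)\cdot 2}\leq -2$ whenever $C^{(j)}\neq 0$, and $-2=C^{\cdot 2}=\sum_jC^{(j)\cdot 2}$ forces $C$ to be supported on a single connected component. If that component is not $B$ then $C$ is orthogonal to $B$ and pairs trivially with it, so it does not interfere with minimality along $B$; otherwise the inequalities $C\cdot D_i\leq 0$ together with the connectedness of $B$ force $\operatorname{supp}(C)=B$ (any component of $B$ adjacent to, but not in, $\operatorname{supp}(C)$ would pair strictly positively with $C$), and then $C$ is a dominant positive root of $\Phi_B$. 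Since in an irreducible root system the highest root is the unique dominant root — a short computation expanding $\langle\theta_B-C,\theta_B-C\rangle$ and using that all roots have equal length — we conclude $C=Z_B$. Hence $Z_B$ is the unique effective root supported on $B$ satisfying the condition, in particular the least one, and $B\mapsto Z_B$ is a bijection between the connected components of $A$ and the fundamental cycles. The only points needing genuine care are the sign normalisation and the uniqueness of the dominant root; everything else is bookkeeping with Proposition \ref{freeroots}. A more geometric alternative would replace the highest‑root input by Artin's construction: build $Z_B$ by Laufer's algorithm ($Z_0=D_1$; while some $Z_k\cdot D_i>0$, set $Z_{k+1}=Z_k+D_i$), prove termination from negative definiteness, and then check independence of the choices and minimality directly.
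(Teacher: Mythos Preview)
Your proposal is correct and follows precisely the route the paper indicates: the lemma is quoted from \cite[Lemme IV.2]{dem} without proof, and the paper then remarks that the fundamental cycle ``is the highest root of the root system associated to this component'' \cite[Section 8.2.7]{dolga}, which is exactly the identification you carry out in detail. Your treatment of existence (highest root has full support and is dominant), and of uniqueness via the fact that in a simply-laced irreducible root system the highest root is the unique dominant positive root, is sound; the alternative Laufer-algorithm sketch you mention at the end is the more classical surface-theoretic proof and would also work.
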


The fundamental cycle depends only on the corresponding connected component of the Dynkin type. Actually it is the highest root of the root system associated to this component \cite[Section 8.2.7]{dolga}.

\subsection{Arithmetic types}

We assume here that $k=\F_q$ is a finite field. We denote by $\sigma$ a generator of the absolute Galois group $\Gamma:=\Gal(\overline{k}/k)$.

Let $X$ denote a weak del Pezzo surface defined over $k$, having degree $d$. We denote by $\R_{\irr}(X)$ the set of the $(-2)$-curves of $X\otimes\overline{k}$, and by $\R_{\irr}$ a representative of the geometric type of $X$. We fix an isomorphism between $\Pic(X\otimes \overline{k})$ and $\Z E_0+\ldots+\Z E_{9-d}$ that send $\R_{\irr}(X)$ to $R_{\irr}$.

The automorphism $\sigma$ induces the automorphism $\Id\times \sigma$ of the surface $X\otimes \overline{k}$, and an automorphism $(\Id\times \sigma)^\ast$ of the group $\Pic(X\otimes\overline{k})$, that we denote by $\sigma^\ast$ in the following; it preserves the intersection pairing, and the canonical class since $X$ is defined over $k$. Under the action of the isomorphism between $\Pic(X\otimes \overline{k})$ and $\Z E_0+\ldots+\Z E_{9-d}$, the image of $\sigma^\ast$ is an element of the Weyl group $\W(\E_{9-d})$, and we get a morphism from $\Gamma$ to $\W(\E_{9-d})$, whose image is a finite cyclic group.

Finally, $\sigma^\ast$ preserves the set of $(-2)$-curves, and its image in $\W(\E_{9-d})$ must lie in $\Stab( \R_{\irr} )$, the stabilizer of $\R_{\irr}$ in $\W(\E_{9-d})$ which is the subgroup consisting of the $\theta$ such that $\theta\R_{\irr}=\R_{\irr}$. 

This motivates the following

\begin{definition}
Let $X$ denote a weak del Pezzo surface defined over $k$, and $\R_{\irr}$ the image of the set of its $(-2)$-curves described above. 

The \emph{arithmetic type} $\mathfrak{T}$ of $X$ is the conjugacy class of the image of $\sigma^\ast$ in $\Stab(\R_{\irr})$.
\end{definition}

Two isomorphisms between $\Pic(X\otimes\overline{k})$ and $\Z E_0+\ldots+\Z E_{9-d}$ sending the $(-2)$-curves of $X$ to $\R_{\irr}$ differ by an element of the above stabilizer, which is a subgroup of $W(\E_{9-d})$. Defining the type as a conjugacy class in this group makes it independent of the choice of such an isomorphism.

\begin{remark}
We will see below that two singular del Pezzo surfaces such that the Frobenius actions on the Picard groups of their associated weak del Pezzo surfaces lie in the same conjugacy class in $W(\E_{9-d})$ (not in the stabilizer) can have different arithmetic properties, in particular different zeta functions. This is easily seen on the tables given in Appendix \ref{gt} that describe the different arithmetic types for degrees $3\leq d\leq 6$, in particular in degree $4$ where we precise the corresponding conjugacy classes in $W(\D_{5})$.
\end{remark}

To end this section, we remark that two weak del Pezzo surfaces sharing the same arithmetic type have the same zeta function. Actually, since a weak del Pezzo surface is rational, we have the following \cite[Theorem 27.1, Corollary 27.1.2]{manin}

\begin{proposition}
\label{zetafaible}
For any $n\geq 1$, the number of rational points of the weak del Pezzo surface $X$ over the finite field $\F_{q^n}$ is
\[
\#X(\F_{q^n})=q^{2n}+q^n\Tr(\sigma^{\ast n})+1
\]
As a consequence, the zeta function of the weak del Pezzo surface $X$ is
\[
Z(X,T)^{-1}=(1-T)(1-q^2T)\det\left(\I-qT\sigma^\ast |\Pic(X\otimes\overline{k})\right)
\]
\end{proposition}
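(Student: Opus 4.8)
The plan is to derive both statements from the Grothendieck–Lefschetz trace formula together with the rationality of a weak del Pezzo surface. First I would recall that a weak del Pezzo surface $X$ over $\F_q$ is a smooth projective rational surface; indeed by Proposition \ref{class1} the base change $X\otimes\overline{k}$ is either $\P^1\times\P^1$, the Hirzebruch surface $F_2$, or an iterated blow-up $X(\Sigma)$ of $\P^2$ at $r=9-d$ points in almost general position, and in every case it is obtained from a rational surface by blow-ups, hence rational. Consequently the $\ell$-adic cohomology groups $H^i_{\text{\'et}}(X\otimes\overline{k},\Q_\ell)$ vanish for $i$ odd, $H^0$ and $H^4$ are one-dimensional with Frobenius acting by $1$ and $q^2$ respectively, and $H^2_{\text{\'et}}(X\otimes\overline{k},\Q_\ell)\cong\Pic(X\otimes\overline{k})\otimes\Q_\ell$ as $\Gamma$-modules, with the Frobenius $\sigma$ acting through $\sigma^\ast$ twisted by the cyclotomic character, so that on $H^2$ the geometric Frobenius acts as $q\,\sigma^\ast$ (the intersection pairing matches the cup product, and the cycle class map is an isomorphism precisely because $X\otimes\overline{k}$ has no higher odd cohomology and $\Pic$ is finitely generated of the correct rank).

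Next I would apply the Lefschetz trace formula: for every $n\geq 1$,
\[
\#X(\F_{q^n})=\sum_{i=0}^{4}(-1)^i\Tr\!\left(\Phi_{q}^{n}\mid H^i_{\text{\'et}}(X\otimes\overline{k},\Q_\ell)\right)
=1+q^n\Tr(\sigma^{\ast n})+q^{2n},
\]
using the description of the action of the geometric Frobenius $\Phi_q$ on each $H^i$ from the previous step; the odd-degree terms are absent and $H^1,H^3$ contribute nothing. This is exactly the first displayed formula. For the second formula I would invoke the cohomological expression of the zeta function,
\[
Z(X,T)=\prod_{i=0}^{4}\det\!\left(\I-T\,\Phi_q\mid H^i_{\text{\'et}}(X\otimes\overline{k},\Q_\ell)\right)^{(-1)^{i+1}},
\]
which gives $Z(X,T)^{-1}=(1-T)\,(1-q^2T)\,\det\!\left(\I-T\,q\,\sigma^\ast\mid \Pic(X\otimes\overline{k})\otimes\Q_\ell\right)$, since again the odd cohomology vanishes; writing $qT$ in place of $T$ inside the middle determinant yields the stated form. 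Alternatively, one can simply note that the power series identity $\log Z(X,T)=\sum_{n\geq1}\#X(\F_{q^n})\,T^n/n$ together with the point count already proved forces the product formula, via the elementary identity $\sum_{n\geq 1}\Tr(M^n)T^n/n=-\log\det(\I-TM)$ applied to $M=1$, $M=q^2$ and $M=q\sigma^\ast$; this avoids re-deriving the cohomological zeta function and only uses the first assertion.

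The main obstacle — really the only nontrivial input — is the identification of $H^2_{\text{\'et}}(X\otimes\overline{k},\Q_\ell)$ with $\Pic(X\otimes\overline{k})\otimes\Q_\ell$ as a Galois module together with the vanishing of the odd cohomology; this is where rationality is used, and it is precisely the content of \cite[Theorem 27.1]{manin} that the excerpt cites. Everything else is the formal machinery of the Lefschetz trace formula and a bookkeeping of eigenvalues, which I would not spell out in detail. One should take a brief moment to check that the arguments of Proposition \ref{class1} apply over $\overline{k}$ and that the two exceptional cases $\P^1\times\P^1$ and $F_2$ are likewise rational with $b_2=2$ and trivial odd Betti numbers, so that no case is left out; this is routine.
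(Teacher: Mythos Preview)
Your plan is correct and is precisely the standard argument: rationality of $X\otimes\overline{k}$ forces vanishing of the odd $\ell$-adic cohomology and an identification $H^2_{\text{\'et}}\cong\Pic\otimes\Q_\ell$ as Galois modules, after which the Lefschetz trace formula yields both displayed formulas. The paper does not actually supply a proof of this proposition; it simply quotes the result from \cite[Theorem 27.1, Corollary 27.1.2]{manin}, where exactly this argument (for smooth projective rational surfaces over finite fields) is carried out. So your proposal is not a different route but rather the proof behind the citation.
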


\section{Singular del Pezzo surfaces}
\label{sec2}

In this section, we consider a weak del Pezzo surface $X$ of degree $d$ defined over the finite field $k=\F_q$.

If $X$ is not ordinary, its anticanonical divisor is no longer ample, and the morphism it (or one of its multiples) defines is no longer an embedding; its image $X_s$ is singular.

In this section, we first describe the geometry of this image, then we determine the divisor class groups of $X$ and the associated singular variety in order to prove Theorem \ref{zetasing}.

\subsection{Anticanonical models: geometric aspects}

\begin{definition}
\label{anticano}
The anticanonical model of the surface $X$ is the variety

\[
X_s := \Proj \bigoplus_{n=0}^\infty H^0(X,-nK_X)
\]

and we denote by $\varphi : X\rightarrow X_s$ the associated morphism.

The variety $X_s$ just defined is the \emph{singular del Pezzo surface} associated to $X$.
\end{definition}

\begin{remark}
One can also consider for $i\geq 1$, the plurianticanonical linear system $|-iK_X|$ and the image $X^{(i)}$ of the morphism it defines. This variety is isomorphic to $X_s$ as long as $di\geq 3$ \cite[V Théorème 1]{dem}. As a consequence, the variety $X_s$ can be identified with the anticanonical image of $X$ when $d\geq 3$.

The anticanonical model of a degree $d$ del Pezzo surface is \cite[Theorem 3.5]{kollar} 
\begin{itemize}
	\item[(i)] if $d=4$, the complete intersection of two quadrics in $\P^4$;
	\item[(ii)] if $d=3$, a cubic in $\P^3$;
	\item[(iii)] if $d=2$, a degree four hypersurface in weighted projective space $\P(1,1,1,2)$;
	\item[(iv)] if $d=1$, a degree six hypersurface in $\P(1,1,2,3)$.
\end{itemize}
\end{remark}

We list below some properties of the surface $X_s$, and of the morphism $\varphi$ \cite[IV Théorème 1, V Proposition 1, V Théorèmes 1 et 2]{dem}. Recall that $A$ is the union of the $(-2)$-curves on $X$, and $U$ is its complementary

\begin{theorem}
1) The schematic fibers of the morphism $\varphi$ are the points of $U$ and the fundamental cycles. As a consequence, $\varphi$ is birational, and we have $\varphi_{\ast} \O_X=\O_{X_s}$.

2) for all $n\in \Z$, $i>0$, we have $\R^i\varphi_{\ast}\O(nK_X)=0$.

3) The surface $X_s$ is normal. The singular points of $X_s$ are the images of the fundamental cycles; they are rational double points.

4) If we set $\O(K_{X_s}):=\varphi_{\ast}\O(K_X)$, then $\O(K_{X_s})$ is locally free of rank $1$, and for every integer $n$ we have canonical isomorphisms
\[
\O(nK_{X_s})=\varphi_{\ast}\O(nK_X),~\O(nK_{X})=\varphi^{\ast}\O(nK_{X_s})
\]
\end{theorem}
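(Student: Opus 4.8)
These four assertions make up the substance of Demazure's study of the anticanonical morphism (\cite{dem}, cited in the statement); here is how I would organise a self-contained argument.

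First, since $-K_X$ is nef and big on the smooth projective surface $X$, I would invoke the contraction formalism for nef and big divisors on surfaces (here $-K_X$ is even semiample: $-2K_X=-K_X-K_X$ is nef and big, so $-nK_X$ is base point free for $n\gg 0$ by the base point free theorem): the ring $R=\bigoplus_{n\ge 0}H^0(X,-nK_X)$ is finitely generated, $X_s=\Proj R$ is a normal projective surface, and the natural map $\varphi\colon X\to X_s$ is a birational morphism contracting exactly the irreducible curves $C$ with $(-K_X)\cdot C=0$. For such a $C$ the adjunction computation recalled just after the definition of a weak del Pezzo surface gives $C^{\cdot 2}=2p_a(C)-2$; since a curve contracted by a birational morphism of surfaces has negative self-intersection, this forces $p_a(C)=0$ and $C^{\cdot 2}=-2$. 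Hence the exceptional locus of $\varphi$ is exactly $A$, and $\varphi$ restricts to an isomorphism of $U$ onto an open subset of $X_s$. Zariski's main theorem gives $\varphi_\ast\O_X=\O_{X_s}$, so all fibres are connected; combining this with the theorem on formal functions and the characterisation of the fundamental cycle in \cite[Lemme IV.2]{dem} (concretely, showing $\mathfrak{m}_y\O_X=\O_X(-Z_B)$ locally over the image $y$ of a connected component $B$ of $A$), one identifies the scheme-theoretic fibres with the points of $U$ and the fundamental cycles. This is assertion 1).

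Next, by Proposition \ref{freeroots} the set $\R_{\irr}(X)$ is a root basis of a simply-laced root subsystem, so each connected component of $A$ is a configuration of smooth rational curves whose intersection matrix is the negative of a Cartan matrix of type $\A$, $\D$ or $\E$: it is negative definite, and distinct components meet transversally in at most one point. I would then appeal to the classification of rational double points (Du Val, Artin, Lipman): the contraction of such an ADE configuration is a rational double point, $\varphi$ is its minimal resolution, and this resolution is crepant, so $K_X=\varphi^\ast K_{X_s}$ with $K_{X_s}$ Cartier (a rational double point being Gorenstein). Together with the previous paragraph this yields assertion 3), and it also gives $\O(nK_X)\cong\varphi^\ast\O(nK_{X_s})$ with $\O(nK_{X_s})$ invertible. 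Finally, rational double points are rational singularities, so $R^i\varphi_\ast\O_X=0$ for $i>0$; the projection formula then gives $R^i\varphi_\ast\O(nK_X)=\O(nK_{X_s})\otimes R^i\varphi_\ast\O_X$, which vanishes for $i>0$, proving assertion 2), and for $i=0$ it gives $\varphi_\ast\O(nK_X)=\O(nK_{X_s})$ which, together with the crepancy identity $\O(nK_X)=\varphi^\ast\O(nK_{X_s})$ already noted, proves assertion 4).

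The step I expect to be the real obstacle is the input from the theory of rational double points: proving \emph{directly}, rather than quoting it, that an ADE configuration of $(-2)$-curves contracts to a point that is simultaneously \emph{rational}, \emph{Gorenstein}, and admits a \emph{crepant} resolution. Rationality alone already requires the explicit combinatorics of the fundamental cycle (Artin's criterion $p_a(Z_B)=0$), and Gorensteinness and crepancy rest on the explicit local equations of the ADE singularities; everything else in the argument is either the nef and big contraction formalism or the projection formula. In practice, as in the statement, one simply cites \cite{dem}.
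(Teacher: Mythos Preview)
The paper does not prove this theorem at all: it merely records the statement and gives the precise references \cite[IV Th\'eor\`eme 1, V Proposition 1, V Th\'eor\`emes 1 et 2]{dem}. Your proposal therefore does strictly more than the paper, and your final remark (``in practice one simply cites \cite{dem}'') is exactly what the paper does.

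Your sketch is a sound outline of the standard argument and matches Demazure's strategy in broad strokes (contraction of the $(-K_X)$-trivial curves, identification of the exceptional components as ADE trees of $(-2)$-curves, rational double points and crepancy, then the projection formula). Two small points worth flagging. First, invoking the base point free theorem to get semiampleness of $-K_X$ is overkill and, as stated, characteristic-sensitive; Demazure works directly with Riemann--Roch and vanishing on surfaces to show $|{-nK_X}|$ is base point free for small explicit $n$, which is both more elementary and valid in arbitrary characteristic. Second, your use of the projection formula to deduce 2) and the $\varphi_\ast$ half of 4) from crepancy and $R^i\varphi_\ast\O_X=0$ is correct, but note that this reverses the logical order of the statement (you need part of 4) to get 2)); that is harmless, just worth making explicit. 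Your identification of the genuinely technical step (Artin's criterion $p_a(Z_B)=0$ and the explicit ADE analysis for Gorensteinness and crepancy) is accurate.
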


In other words, $\varphi$ is an isomorphism from $U$ on its image $U_s$, and each connected component of $A$ is sent to a point which is a rational double point on $X_s$. We obtain all singular points of $X_s$ in this way. The map $\varphi$ is a minimal resolution of the singularities of $X_s$, and the Dynkin type of $X$ is the dual graph to this resolution. Thus the singularity type of $X_s$ is exactly the Dynkin type of $X$.

The type of a singularity $x$ of $X_s$ is the type of the connected component corresponding to $x$ in the intersection graph of $(-2)$-curves of $X$. Since all singularities are rational double points, their types fall in the $\A\D\E$ classification.

Note also that since $\O(K_{X_s})$ is locally free of rank $1$, this invertible sheaf corresponds to a Cartier divisor $K_{X_s}$.

We end by recalling the following result \cite[V Corollaire 2]{dem}. 

\begin{theorem}
\label{secsing}
Let $\FFF$ denote a locally free sheaf on $X_s$. Then, for any $i\geq 0$, we have
\[
H^i(X_s,\FFF)=H^i(X,\varphi^\ast\FFF)
\]

Moreover, we have Serre duality
\[
H^i(X_s,\FFF)=H^{2-i}(X_s,\O(K_{X_s})\otimes\FFF^\vee)^\vee
\]
\end{theorem}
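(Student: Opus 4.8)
The statement should follow by reducing all cohomology on $X_s$ to cohomology on the smooth surface $X$, using only facts already recorded: $\varphi_{\ast}\O_X=\O_{X_s}$ and $\R^i\varphi_{\ast}\O_X=0$ for $i>0$ (parts 1 and 2 of the preceding theorem, taken with $n=0$), the identity $\O(K_X)=\varphi^{\ast}\O(K_{X_s})$ (part 4), and ordinary Serre duality on the smooth projective surface $X$.

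\textbf{Step 1: the comparison isomorphism.} First I would prove $H^i(X_s,\FFF)=H^i(X,\varphi^{\ast}\FFF)$. Since $\FFF$ is locally free on $X_s$, so is $\varphi^{\ast}\FFF$ on $X$, and the projection formula gives a canonical isomorphism $\R^i\varphi_{\ast}(\varphi^{\ast}\FFF)\cong\FFF\otimes_{\O_{X_s}}\R^i\varphi_{\ast}\O_X$ for every $i$; properness of $\varphi$ and local freeness (hence flatness) of $\FFF$ are what make the formula applicable. The quoted vanishing then gives $\R^i\varphi_{\ast}(\varphi^{\ast}\FFF)=0$ for $i>0$, while for $i=0$ we get $\varphi_{\ast}(\varphi^{\ast}\FFF)=\FFF\otimes\O_{X_s}=\FFF$. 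Hence the Leray spectral sequence $E_2^{p,q}=H^p\big(X_s,\R^q\varphi_{\ast}(\varphi^{\ast}\FFF)\big)\Rightarrow H^{p+q}(X,\varphi^{\ast}\FFF)$ is concentrated in the row $q=0$, so it degenerates and yields the first equality.

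\textbf{Step 2: Serre duality.} Next I would apply Step 1 to the locally free sheaf $\mathcal{G}:=\O(K_{X_s})\otimes\FFF^{\vee}$ in degree $2-i$, obtaining $H^{2-i}(X_s,\mathcal{G})=H^{2-i}(X,\varphi^{\ast}\mathcal{G})$. Since $\varphi^{\ast}$ is a tensor functor and $\varphi^{\ast}\O(K_{X_s})=\O(K_X)$ by part 4 of the preceding theorem, we have $\varphi^{\ast}\mathcal{G}=\O(K_X)\otimes(\varphi^{\ast}\FFF)^{\vee}$. Serre duality on the smooth projective surface $X$, with dualizing sheaf $\O(K_X)$, gives $H^{2-i}\big(X,\O(K_X)\otimes(\varphi^{\ast}\FFF)^{\vee}\big)^{\vee}\cong H^i(X,\varphi^{\ast}\FFF)$, which by Step 1 applied to $\FFF$ itself equals $H^i(X_s,\FFF)$. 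Chaining these identifications yields $H^i(X_s,\FFF)=H^{2-i}\big(X_s,\O(K_{X_s})\otimes\FFF^{\vee}\big)^{\vee}$.

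\textbf{On the difficulty.} There is no genuine geometric obstacle: the two substantial inputs — the vanishing $\R^i\varphi_{\ast}\O_X=0$ and the fact that $\O(K_X)$ descends to the Cartier divisor $K_{X_s}$ with $\O(K_X)=\varphi^{\ast}\O(K_{X_s})$ — are exactly what was established in the preceding theorem. The only care needed is bookkeeping: checking the hypotheses of the projection formula and of Leray degeneration (properness of $\varphi$, local freeness of $\FFF$), and verifying that the successive isomorphisms are the canonical/functorial ones, so that the conclusion is an honest equality rather than merely an abstract isomorphism. One could alternatively invoke Grothendieck–Serre duality directly on $X_s$, which is legitimate since $X_s$ is projective and Gorenstein (its rational double points are hypersurface singularities) with invertible dualizing sheaf $\O(K_{X_s})$; but routing through $X$ keeps the argument within the tools already assembled.
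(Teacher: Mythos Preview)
Your proof is correct and is the standard argument. Note, however, that the paper does not actually prove this theorem: it is stated with the preamble ``We end by recalling the following result \cite[V Corollaire 2]{dem}'' and is simply cited from Demazure. Your argument---projection formula plus degeneration of the Leray spectral sequence for the first part, then transport of Serre duality from the smooth $X$ via $\varphi^\ast\O(K_{X_s})=\O(K_X)$---is precisely the proof one would expect to find in the cited source, and it uses exactly the inputs the paper has already recorded in the preceding theorem.
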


From this result, we can describe the global sections of the Cartier divisors on $X_s$. Moreover we see that the sheaf $\O(K_{X_s})$ is dualizing; since it is locally free, the surface $X_s$ is Gorenstein.

\subsection{Divisor class groups and zeta functions}

Let $X$ denote a weak non ordinary del Pezzo surface defined over $k=\F_q$, and $X_s$ its anticanonical model. Then $\varphi$ and $X_s$ are defined over $k$ since the anticanonical divisor is. In the same way, the varieties $\Sing(X_s)$ of dimension zero, and $A$ of dimension $1$ are defined over $k$.

Recall that the geometric Picard group (the group of classes of Cartier divisors) $\Pic(X\otimes\overline{k})$ identifies to the free $\Z$-module generated by $E_0$ and $E_1,\ldots,E_r$, with $r=9-d$. It is equal to the group of classes of Weil divisors $\Cl(X\otimes\overline{k})$ since $X$ is smooth. Using this identification, we will no longer mention the dependance on $X$ of some objects such as the root modules.

We first describe the groups $\Cl(X_s\otimes\overline{k})$ and $\Pic(X_s\otimes\overline{k})$. Note that since $X_s$ is normal, the groups of Cartier divisors and of invertible sheaves coincide.

The restriction to $U\otimes\overline{k}$ of Weil divisors $X\otimes\overline{k}$ is surjective \cite[Proposition II.6.5]{hart}; its kernel consists of divisors whose support is contained in the complementary of $U\otimes\overline{k}$, i.e. in $A\otimes\overline{k}$. This last group is $\RR$, since it is generated by the irreducible components of $A\otimes\overline{k}$, and these are exactly the $(-2)$-curves.

No principal divisor on $X\otimes\overline{k}$ has support contained in $A\otimes\overline{k}$ \cite[p 225]{lip}. Thus $\RR$ remains the kernel of the restriction of classes of Weil divisors from $\Cl(X\otimes\overline{k})$ to $\Cl(U\otimes\overline{k})$. 

The morphism $\varphi$ induces an isomorphism from $U$ to $U_s$, and we have $\Cl(U_s\otimes\overline{k})=\Cl(U\otimes\overline{k})$. Now $X_s\otimes\overline{k}\setminus U_s\otimes\overline{k}$ has codimension $2$ in $X_s\otimes\overline{k}$, and we deduce \cite[Proposition II.6.5]{hart} that $\Cl(U_s\otimes\overline{k})=\Cl(X_s\otimes\overline{k})$. We get

\begin{equation}
\label{cl}
\xymatrix{
0 \ar@{->}[r] & \RR \ar@{->}[r] & \Cl(X\otimes\overline{k})=\Pic(X\otimes\overline{k}) \ar@{->}[r] & \Cl(X_s\otimes\overline{k}) \ar@{->}[r] & 0 \\
}
\end{equation}

We come to the Picard group. The pull-back $\varphi^\ast:\Pic(X_s\otimes\overline{k}) \rightarrow \Pic(X\otimes\overline{k})$ gives rise to the exact sequence \cite[Proposition 1]{bri}

\begin{equation}
\label{pic}
\xymatrix{
0 \ar@{->}[r] & \Pic(X_s\otimes\overline{k}) \ar@{->}[r]^{\varphi^\ast} & \Pic(X\otimes\overline{k}) \ar@{->}[r]^{~~~~\theta} & \RR^\vee \ar@{->}[r] & \Br(X_s\otimes\overline{k}) \ar@{->}[r]^{\varphi^\ast} & \Br(X\otimes\overline{k})\\
}
\end{equation}
where we have set $\RR^\vee:=\Hom(\RR,\Z)$, and $\theta$ comes from the intersection product: for any $D\in \Pic(X\otimes\overline{k})$, $\theta(D):\RR\rightarrow \Z$ is defined by $\theta(D)(R)=D\cdot R$.

In other words, the group $\Pic(X_s\otimes\overline{k})$ identifies to the following subgroup of $\Pic(X\otimes\overline{k})$
\[
\begin{array}{rcl}
\Pic(X_s\otimes\overline{k}) & = & \{D\in \Pic(X\otimes\overline{k}),~\forall R\in \RR,~D\cdot R=0\}\\
& = & \{D\in \Pic(X\otimes\overline{k}),~\forall R\in \R_{\irr},~D\cdot R=0\}\\
\end{array}
\]

Since $X\otimes\overline{k}$ is a rational surface, its Brauer group is trivial. We deduce the equality $\Coker \theta=\Br(X_s\otimes\overline{k})$, and we have the following explicit description of this last group \cite[Proposition 4]{bri}. The root module $\RR$ is a subgroup of $K_X^\perp$, and $\Br(X_s\otimes\overline{k})$ is the torsion subgroup of the quotient
\[
\Coker \theta=\Br(X_s\otimes\overline{k})=\left(K_X^\perp/\RR\right)_{\tors}
\]
It depends on the geometric type defined above (not only on the Dynkin type in general: for instance there are two orbits for the Dynkin type $4\A_1$ when $d=2$, one gives a trivial cokernel, the other an order $2$ cokernel), and the different cases are described in \cite[Theorem 6]{bri}. Note that $\theta$ is often surjective (this is always true when $d\geq 5$).

We turn to the study of the zeta function. We first determine the zeta function of the union $A$ of the $(-2)$-curves in $X$.

Since the set $\R_{\irr}$ is stable under the action of $\Gamma$, the action of $\sigma^\ast$ on $\Pic(X\otimes\overline{k})$ restricts to an action on $\RR$. Moreover this action preserves the intersection product, and it induces an automorphism on the singular graph; as a consequence it permutes the $(-2)$-curves.  The matrix of the action of $\sigma^\ast$ over $\RR$ with respect to the basis $\R_{\irr}$ is a permutation matrix.

We can express the zeta function of $A$ in terms of the characteristic polynomial of this action

\begin{lemma}
\label{zetarac}
We have the equality
\[
Z(A,T)=Z(\Sing(X_s),T)\det(\I-qT\sigma^\ast|\RR)^{-1}
\]
\end{lemma}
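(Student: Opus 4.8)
The plan is to compute the zeta function of $A\otimes\overline{k}$ directly from its structure as a curve, using the compatible decompositions of $A$ into its connected components and of each component into its irreducible components (the $(-2)$-curves). First I would recall that $A$ is a projective curve over $\F_q$ whose irreducible components over $\overline{k}$ are exactly the $(-2)$-curves, each isomorphic to $\P^1_{\overline{k}}$ by the adjunction remark in Section \ref{sec1}; moreover the connected components of $A\otimes\overline{k}$ are the trees of $\P^1$'s indexed by the connected components of the Dynkin diagram, and $\varphi$ contracts each such connected component to a single point of $\Sing(X_s)$. Since $\varphi$, $A$ and $\Sing(X_s)$ are all defined over $k$, the Frobenius $\sigma$ acts compatibly on the set of connected components of $A\otimes\overline{k}$ and on $\Sing(X_s)(\overline{k})$, and these two $\Gamma$-sets are equivariantly identified by $\varphi$.

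The key computation is a point count over each $\F_{q^n}$. Working geometrically first: a connected component $B$ of $A\otimes\overline{k}$ that is a tree with $\ell$ irreducible components (each a $\P^1$) has $\#B(\overline{\F}_{q^n})$ contributions $\ell(q^n+1)$ from the components minus $(\ell-1)$ for the nodes, i.e. $\#B = \ell q^n + 1$; equivalently $\#B(\F_{q^n}) = 1 + q^n\cdot(\text{number of components of }B)$ whenever $\sigma^n$ fixes $B$ and each of its components, with the obvious modification by Galois descent when $\sigma^n$ permutes components or components of $B$ nontrivially. The clean way to package this, which I would use, is: the normalization $\widetilde{A}\to A$ has $\widetilde{A}\otimes\overline{k}=\bigsqcup_{R\in\R_{\irr}}R\cong\bigsqcup\P^1$, so $\#\widetilde A(\F_{q^n})$ counts, over each orbit of $\sigma^n$ on $\R_{\irr}$, one copy of $\P^1(\F_{q^{n\cdot|{\rm orbit}|}})$; this gives $\#\widetilde A(\F_{q^n}) = \#(\text{fixed pts of }\sigma^{\ast n}\text{ on }\R_{\irr}) + q^n\cdot\#(\text{same})$ — more precisely the term $q^n$ is weighted correctly because $\P^1(\F_{q^m})$ has $q^m+1$ points and summing the "$1$" part over an orbit of size $m$ gives $m$ points in $\F_{q^n}$... the accurate statement is $\#\widetilde A(\F_{q^n}) = (1+q^n)\operatorname{Tr}(\sigma^{\ast n}|\RR)$ after regrouping, since the permutation matrix of $\sigma^\ast$ on the basis $\R_{\irr}$ has $\operatorname{Tr}(\sigma^{\ast n}|\RR)$ equal to the number of $(-2)$-curves fixed by $\sigma^{\ast n}$. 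Then $\#A(\F_{q^n}) = \#\widetilde A(\F_{q^n}) - \#(\text{nodes, with multiplicity})$, and the nodes are glued in exactly the pattern of the Dynkin diagram; collapsing each connected component of $A\otimes\overline k$ to a point shows that the "$1+q^n$" part, minus the node corrections, reassembles to $\#\Sing(X_s)(\F_{q^n}) + q^n\operatorname{Tr}(\sigma^{\ast n}|\RR)$, because contracting a tree with $\ell$ components removes $\ell-1$ nodes and replaces the $\ell$ disjoint "$1$"s by a single point.

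Once the point count $\#A(\F_{q^n}) = \#\Sing(X_s)(\F_{q^n}) + q^n\operatorname{Tr}(\sigma^{\ast n}|\RR)$ is established for all $n\geq 1$, I would conclude by the standard translation into zeta functions: $Z(A,T)=\exp\!\big(\sum_{n\geq1}\#A(\F_{q^n})\tfrac{T^n}{n}\big)$, and the three pieces of the point count multiply, giving $Z(A,T) = Z(\Sing(X_s),T)\cdot\exp\!\big(\sum_{n\geq1}q^n\operatorname{Tr}(\sigma^{\ast n}|\RR)\tfrac{T^n}{n}\big) = Z(\Sing(X_s),T)\cdot\det(\I - qT\,\sigma^\ast|\RR)^{-1}$, where the last equality is the usual identity $\exp(\sum_n \operatorname{Tr}(M^n)T^n/n) = \det(\I - TM)^{-1}$ applied to $M = q\sigma^\ast$ acting on $\RR$ (legitimate since $\sigma^\ast$ acts on $\RR$ by a permutation matrix, hence by an invertible integer matrix).

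The main obstacle I anticipate is bookkeeping the node corrections correctly under the Galois action — that is, checking that when $\sigma$ permutes the irreducible components of a connected component $B$ nontrivially it also permutes the nodes accordingly, so that the alternating count of components and nodes over $\F_{q^n}$ still telescopes to exactly "one point per connected component fixed by $\sigma^n$, i.e. $\#\Sing(X_s)(\F_{q^n})$." This is really the statement that the incidence complex of $A\otimes\overline k$ (a disjoint union of trees) is Galois-equivariantly identified with the fibers of $\varphi$ over $\Sing(X_s)$, which follows from part 1) of the Theorem recalled above (the schematic fibers of $\varphi$ are the points of $U$ and the fundamental cycles) together with the fact that a tree of $\P^1$'s of length $\ell$ has Euler-characteristic-type count making $\#(\text{components}) - \#(\text{nodes}) = 1$; everything else is the formal exponentiation identity.
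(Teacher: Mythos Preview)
Your strategy is sound and reduces to the same point-count identity the paper proves, namely $\#A(\F_{q^n}) = \#\Sing(X_s)(\F_{q^n}) + q^n\Tr(\sigma^{\ast n}|\RR)$, after which the exponentiation is routine. Your formula $\#\widetilde A(\F_{q^n}) = (1+q^n)\Tr(\sigma^{\ast n}|\RR)$ for the normalization is also correct.

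The gap is exactly where you flagged it, and your stated resolution does not close it. The correction $\#\widetilde A(\F_{q^n})-\#A(\F_{q^n})$ is not simply ``nodes with multiplicity'': a node that is $\F_{q^n}$-rational because $\sigma^n$ \emph{swaps} its two incident curves has no $\F_{q^n}$-rational preimage in $\widetilde A$, so it contributes $-1$, not $+1$, to that difference. Unwinding this, what you actually need on each $\sigma^n$-stable component with Dynkin tree $T$ is
\[
\#\{\text{vertices fixed by }\sigma^n\}\;-\;\#\{\text{edges fixed pointwise}\}\;+\;\#\{\text{edges flipped}\}\;=\;1,
\]
i.e.\ the Lefschetz fixed-point identity for the automorphism $\sigma^n$ acting on the contractible simplicial complex $T$. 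Invoking only the ordinary Euler characteristic $\#V-\#E=1$ covers just the case $\sigma^n|_T=\mathrm{id}$. The paper handles this point differently and more elementarily: it shows that in a $\sigma^n$-stable component the curves individually fixed by $\sigma^n$ form a \emph{connected} subtree (the unique path between two fixed vertices is itself fixed), so that ordinary inclusion--exclusion on that subtree already gives $\#A_x(\F_{q^n})=1+N_{nx}q^n$; implicitly this uses that adjacent vertices in a tree lie at distances differing by exactly~$1$ from any fixed vertex, so no edge can be flipped once some vertex is fixed. It then treats by hand the only remaining case with no fixed curve, namely $\A_{2k}$ under its central reflection, where the unique $\F_{q^n}$-point is the middle node. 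Either the Lefschetz argument or this case analysis would complete your proof.
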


\begin{proof}

Write $A$ as the disjoint union of its connected components $A_x$, where for any $x\in \Sing(X_s)(\overline{\F}_{q})$, $A_x$ is the fiber (seen as a set) $\varphi^{-1}(\{x\})$.

Let $n\geq 1$ an integer; since $\varphi$ is defined over $k$, we have $A_{x^{\sigma^n}}=A_x^{\sigma^n}$. If we have $x\notin \Sing(X_s)(\F_{q^n})$, then $A_x^{\sigma^n}\cap A_x=\emptyset$, and we deduce that $A_x(\F_{q^n})=\emptyset$. 

Now assume $x\in \Sing(X_s)(\F_{q^n})$, and let us denote by $C_{x1},\ldots,C_{xk}$ the absolutely irreducible components of $A_x$, whuch form the Coxeter-Dynkin graph associated to $x$. 

For any two curves $C_{xi}$ and $C_{xj}$ defined over $\F_{q^n}$, there is a unique (since the graph has no cycle) chain with minimal length in the graph between them. As the extremities of this chain are fixed by $\sigma^n$, the whole chain is fixed. We deduce from this fact that the subgraph of the $C_{xi}$, $1\leq i\leq k$ defined over $\F_{q^n}$ is connected; let us denote $N_{nx}$ the number of its vertices. As the $(-2)$-curves have normal crossings, we deduce

\begin{eqnarray*}
\# A_x(\F_{q^n}) & = & \sum_{i=1}^{N_{nx}} \# C_{xi}(\F_{q^n}) - \sum_{1\leq i<j\leq N_n} \# (C_{xi}\cap C_{xj})(\F_{q^n})\\
& = & N_{nx}(1+q^n)-\sum_{1\leq i<j\leq N_{nx}} C_{xi}\cdot C_{xj}\\
& = & 1+N_{nx}q^n\\
\end{eqnarray*}

where the last equality comes from the fact that the sum of the intersection products is the number of edges in the subgraph whose vertices are the $C_{xi}$ defined over $\F_{q^n}$. Since this graph is connected, has $N_{nx}$ vertices and type $\A,\D,\E$, it contains exactly $N_{nx}-1$ edges.

Finally, if none of the absolutely irreducible components of $A_x$ is defined over $\F_{q^n}$, then $\sigma^n$ acts without fixed points over the Coxeter-Dynkin graph associated to $x$. The only graphs admitting such an action are the $\A_{2k}$, and the action is the symmetry around the center of the graph, of order $2$. In this case, the unique point defined over $\F_{q^n}$ is the intersection of the effective roots $C_{xk}$ and $C_{xk+1}$.

Summing up, we have
\[
\# A_x(\F_{q^n})=\left\{ 
\begin{array}{rl}
1+ N_{nx}q^n & \textrm{~if~} x\in \Sing(X_s)(\F_{q^n})\\
0 & {\rm else}\\
\end{array}
\right.
\]

Thus, for any $n\geq 1$ we have
\[
\# A(\F_{q^n})=\# \Sing(X_s)(\F_{q^n})+N_nq^n
\]
where $N_n$ is the number of $(-2)$-curves on $X$ defined over $\F_{q^n}$. 

Replacing this expression in the usual expansion of zeta functions, we get

\[
Z(A,T)=\exp\left(\sum_{n\geq 1} \# A(\F_{q^n}) \frac{T^n}{n}\right)=Z(\Sing(X_s),T)\exp\left(\sum_{n\geq 1}N_n \frac{(qT)^n}{n}\right)
\]

Since the matrix of $\sigma^{\ast n}$ with respect to the basis $\R_{\irr}(X)$ is a permutation matrix, its trace is exactly the number of elements in the basis which remain invariant under the action of $\sigma^{\ast n}$. We deduce that $N_n=\Tr(\sigma^{\ast n}|\RR)$, and the result comes from the expansion of the characteristic polynomial of an endomorphism in terms of the traces of its iterates.

\end{proof}

We are ready to prove Theorem \ref{zetasing}.

\begin{proof}
 
From the above results, the following equalities hold for any $n\geq 1$
\[
\begin{array}{rcll}
\# X_s(\F_{q^n}) & = & \# U_s(\F_{q^n}) + \#\Sing(X_s)(\F_{q^n}) & {\rm excision~for~} X_s \\
 & = & \# U(\F_{q^n})+ \#\Sing(X_s)(\F_{q^n}) & {\rm isomorphim~} U\simeq U_s \\
 & = & \# U(\F_{q^n})+\# A(\F_{q^n})-q^n \Tr(\sigma^{\ast n}|\RR) & {\rm Lemma~} \ref{zetarac}\\
 & = & \# X(\F_{q^n})-q^n \Tr(\sigma^{\ast n}|\RR) & {\rm excision~for~} X \\
\end{array}
\]

Now from Proposition \ref{zetafaible}, we have $\#X(\F_{q^n})=q^{2n}+q^n\Tr(\sigma^{\ast n})+1$, and
\[
\# X_s(\F_{q^n})=q^{2n}+q^n\left(\Tr(\sigma^{\ast n})-\Tr(\sigma^{\ast n}|\RR)\right)+1
\]

%Puis on travaille comme manin p144: on considère les valeurs propres de l'action de $\sigma^\ast$ sur les $\C$-espaces vectoriels $\Pic(X_s\otimes\overline{k})\otimes \C$, $\Pic(X\otimes\overline{k})\otimes \C$, $\RR^\vee\otimes \C$ et $\RR^\vee\otimes \C$.
%
%Toutes ces valeurs propres sont des racines de l'unité, et les traces des endomorphismes induits sont égales à celles de leurs inverses. Ceci doit suffire à assurer que les polynômes caractéristiques sur les deux derniers sont égaux.
%
%Puis en tensorisant la suite (\ref{pic}) de $\Z$-modules par $\C$, on tue la torsion et on s'assure que le produit des polynômes acarcatéristiques sur le premier et le troisième $\C$-ev est égal au polynôme caractéristique sur le premier. A voir. 
%
%
%We summarize the above results in the following sequence of equalities
%\[
%\begin{array}{rcll}
%Z(X_s,T)^{-1} & = & Z(U_s,T)^{-1}Z(\Sing(X_s),T)^{-1} & {\rm excision~for~} X_s \\
 %& = & Z(U,T)^{-1}Z(\Sing(X_s),T)^{-1} & {\rm isomorphim~} U\simeq U_s \\
 %& = & Z(U,T)^{-1}Z(A,T)^{-1}\det(\I-qT\sigma^\ast|\RR)^{-1} & {\rm Lemma~} \ref{zetarac}\\
 %& = & Z(X,T)^{-1}\det(\I-qT\sigma^\ast|\RR)^{-1} & {\rm excision~for~} X \\
%\end{array}
%\]
%Now from 

If we pass to zeta functions as in the proof of Proposition \ref{zetafaible}, we get
\[
Z(X_s,T)^{-1}=(1-T)(1-q^2T)\det\left(\I-qT\sigma^\ast |\Pic(X\otimes\overline{k})\right)\det(\I-qT\sigma^\ast|\RR)^{-1}
\]

Tensoring the exact sequence of $\Z$-modules (\ref{pic}) with $\C$, the torsion disappears and we get the exact sequence of vector spaces
\begin{equation*}
\xymatrix{
0 \ar@{->}[r] & \Pic(X_s\otimes\overline{k})\otimes \C \ar@{->}[r] & \Pic(X\otimes\overline{k})\otimes \C \ar@{->}[r] & \RR^\vee\otimes \C \ar@{->}[r] & 0 \\
}
\end{equation*}
Finally, the action of $\sigma^\ast$ on $\RR^\vee\otimes \C$ is adjoint to that of $\sigma^{\ast-1}$ over $\RR\otimes \C$, and they share the same characteristic polynomial. Finally the matrix of $\sigma^\ast$ in the basis $\R_{\irr}$ of the the $\Z$-module $\RR$ is a permutation matrix; thus the actions of $\sigma^{\ast-1}$ and $\sigma^{\ast}$ also share the same characteristic polynomial. We get the equality
\[
\det\left(\I-qT\sigma^\ast |\Pic(X\otimes\overline{k})\right)=\det(\I-qT\sigma^\ast|\RR)\det\left(\I-qT\sigma^\ast |\Pic(X_s\otimes\overline{k})\right)
\]

This ends the proof.
\end{proof}

\section{Construction of weak del Pezzo surfaces of degree at least five}
\label{sec3}

The aim of this section is to show that for any finite field $\F_q$ and arithmetic type belonging to a degree $d\geq 5$, there exists a weak del Pezzo surface of this type defined over $\F_q$. In order to do this, we give explicit constructions.

These surfaces are birational to the projective plane $\P^2$ \cite[Lemma 9.3]{ct}, and our constructions are sequences of blowups and contractions. The fundamental remark is that the configurations of points to be blown-up exist over any finite field. For instance, we will often need three collinear points, but this is possible over any finite field since a line defined over $\F_q$ contains $q+1$ rational points.

We will often use the graphs of negative curve from Definition \ref{negacurves} in order to represent the geometric type. 

\subsection{Degrees seven and eight}

First note that in this case, there is only one geometric type (of Dynkin type $\A_1$), and one arithmetic type.

From Proposition \ref{class1}, the only degree $8$ weak, non ordinary del Pezzo surface is the Hirzebruch surface $F_2$; the only degree $8$ singular del Pezzo surface is its image by the contraction of its negative section. We construct these surfaces from degree $7$ surfaces below.

Again from Proposition \ref{class1}, all the degree $7$ del Pezzo surfaces over an algebraically closed field are obtained from the projective plane $\P^2$ by two successive blowups at points $p_1,p_2$. Such a surface is not ordinary if and only if the center of the second blowup is a point $p_2\succ p_1$ of the exceptional divisor of the first one. In this case, the effective root is the strict transform of this exceptional divisor, whose class in the Picard group is $E_1-E_2$. We get the following graph of negative curves

\begin{center}
\begin{tikzpicture}[baseline=0]
\coordinate (R1) at (0,0);
\coordinate (E1) at (2,0);
\coordinate (E2) at (4,0);
\draw (R1) -- (E1) -- (E2);
\draw (R1) node[above] {\tiny$E_1-E_2$} circle (2pt) ;
\draw[fill] (E1) node[above] {\tiny$E_2$} circle (2pt) ;
\draw[fill] (E2) node[above] {\tiny$E_0-E_1-E_2$} circle (2pt) ;

\end{tikzpicture}
\end{center}

In order to get a non ordinary del Pezzo surface of degree $7$ defined over the finite field $\F_q$, we have to choose rational points $p_1,p_2$ for the centers of the blowups.

The Hirzebruch surface can be obtained from the degree $7$ non ordinary del Pezzo surface by contracting the strict transform of the line $(p_1p_2)$ (the unique line in $\P^2$ passing trough $p_1$ and whose strict transform after the first blowup passes trough $p_2$).

\subsection{Degree six}

Recall that the Weyl group is generated by the reflections $s_{E_1-E_2}$, $s_{E_2-E_3}$ and $s_{E_0-E_1-E_2-E_3}$. It is isomorphic to the dihedral group of order $12$.

The possible geometric types are given in \cite[Section 8.4.2]{dolga} or \cite[Proposition 8.3]{ct}. Except in one case, we follow the choices made in this last article for the set of roots representing the corresponding orbit. As a consequence, most of the graphs of negative curve are already described in the above mentioned article, and we do not recall them. 

We refer to the corresponding table in the appendix for the different arithmetic types.

\begin{itemize}
	\item[$\A_1$(4)] to obtain the split surface (arithmetic type $1$) we blowup three non collinear points rational points $p_1\prec p_2$ and $p_3$ (in other words, the strict transform of the line $(p_1p_3)$ after the first blowup does not pass through $p_2$). 

The stabilizer of the root $E_1-E_2$ is the subgroup of order $2$ generated by the reflection $s_{E_0-E_1-E_2-E_3}$. Its elements fall into two conjugacy classes.

In order to construct a surface of arithmetic type $2$, we will contract a line on a weak degree $5$ del Pezzo surface $X(\Sigma)$ of geometric type $\A_1$, obtained by blowing-up four points $p_1,\ldots,p_4$ in $\P^2$, the first three being collinear (see the degree $5$ surface of geometric type $\A_1$ below).  

If we contract the exceptional curve $E_0-E_3-E_4$ on such a surface, the vertices $E_3$ and $E_4$ disappear, and we obtain a degree $6$ del Pezzo surface with the graph of negative curves

\begin{center}
\begin{tikzpicture}[baseline=0]
\coordinate (R1) at (0,0);
\coordinate (E1) at (2,0);
\coordinate (E2) at (4,0);
\coordinate (E3) at (-2,0);
\coordinate (E4) at (-4,0);
\draw (E4) -- (E2) -- (R1);
\draw (R1) -- (E1) -- (E2);
\draw (R1) node[below] {\tiny$E_0-E_1-E_2-E_3$} circle (2pt) ;
\draw[fill] (E1) node[above] {\tiny$E_1$} circle (2pt) ;
\draw[fill] (E2) node[above] {\tiny$E_0-E_1-E_4$} circle (2pt) ;
\draw[fill] (E4) node[above] {\tiny$E_0-E_2-E_4$} circle (2pt) ;
\draw[fill] (E3) node[above] {\tiny$E_2$} circle (2pt) ;
\end{tikzpicture}
\end{center}

Assume that the first two points $p_1,p_2\in \P^2(\F_{q^2})$ are conjugate over $\F_{q}$, and $p_3,p_4$ are defined over $\F_q$. The exceptional curve $E_0-E_3-E_4$ is defined over $\F_q$. Contracting it, we get a surface $X$ defined over $\F_q$ of degree $6$, of the geometric type under consideration, which is not split: this is a surface of arithmetic type $2$.

	\item[$\A_1$(3)] we blow-up three collinear points in $\P^2$; we get the three exceptional curves $E_1,E_2,E_3$, and the effective root $E_0-E_1-E_2-E_3$ which corresponds to the strict transform of the line through the $p_i$. 
	
The stabilizer of the root in the Weyl group $W(\E_6)$ is the subgroup generated by the reflections $s_{E_2-E_1}$ and $s_{E_2-E_3}$, which is isomorphic to the symmetric group $\mathfrak{S}_3$.

There are three conjugacy classes in this group, corresponding to the three arithmetic types $3,4,5$. We construct a surface of each type in the following way
\begin{itemize}
	\item[3 :] the points $p_1,p_2,p_3$ are defined over $\F_q$;
	\item[4 :] the points $p_1,p_2=p_1^\sigma$ are defined over $\F_{q^2}$, and $p_3$ over $\F_q$;
	\item[5 :] the points $p_1,p_2=p_1^\sigma,p_3=p_1^{\sigma^2}$ are defined over $\F_{q^3}$.
\end{itemize}

	\item[$2\A_1$] we blowup three collinear points $p_1\prec p_2$ and $p_3$ (the point $p_2$ is the intersection of the strict transform of the line $(p_1p_3)$ with the exceptional curve of the first blowup). The exceptional curves on the resulting surface are $E_2,E_3$, and the $(-2)$-curves correspond to the classes $E_1-E_2$ and $E_0-E_1-E_2-E_3$. 
	
The stabilizer of the set $\{E_1-E_2,E_0-E_1-E_2-E_3\}$ in the Weyl group is trivial, and there is exactly one arithmetic type, with number $6$.

	\item[$\A_2$] we blowup three non collinear points $p_1\prec p_2 \prec p_3$ (the point $p_3$ is not the intersection of the strict transform of $(p_1p_2)$ with the exceptional divisor of the second blowup). The exceptional curves correspond to the classes $E_0-E_1-E_2$, $E_3$, and the $(-2)$-curves to $E_1-E_2$, $E_2-E_3$.

The stabilizer of the set $\{E_1-E_2,E_2-E_3\}$ in the Weyl group is generated by the reflection $s_{E_0-E_1-E_2-E_3}$, it has order $2$. As a consequence, there are two arithmetic types, one split that can be constructed by choosing rational points above, the other non-split.

In order to construct the non-split one, we start with a degree $5$ del Pezzo surface of geometric type $\A_2$ and arithmetic type $7$ (see below). If we contract the exceptional curve with class $E_0-E_1-E_2$ (which is defined over $\F_q$), we get a degree $6$ del Pezzo surface of arithmetic type $8$.

	\item[$\A_1\A_2$] we blowup three collinear points $p_1\prec p_2 \prec p_3$, where the point $p_3$ is the intersection of the strict transform of $(p_1p_2)$ with the exceptional divisor of the second blowup. We get an exceptional curve of class $E_3$, and $(-2)$-curves corresponding to the classes $E_1-E_2$, $E_2-E_3$, $E_0-E_1-E_2-E_3$.

Once again, the stabilizer is trivial and the only arithmetic type (number 9) is split.
\end{itemize}

\subsection{Degree five}

Here the Weyl group is generated by the reflections $s_{E_1-E_2}$, $s_{E_2-E_3}$, $s_{E_3-E_4}$ and $s_{E_0-E_1-E_2-E_3}$. It is isomorphic to the symmetric group $\mathfrak{S}_5$.

In this section, we shall not systematically follow the choices made in \cite[Proposition 8.5]{ct} for the classes of $(-2)$-curves representing each geometric type (see also \cite[Section 8.5.1]{dolga} for a list of these types). The reason is that there are many ways to construct a del Pezzo surface of a given geometric type by playing on the configuration of the points we blow up. Here we adopt the configuration that seems best adapted to the description of the arithmetic types (in some sense the most ``symmetric '' one), and it need not coincide with the one described in the above article. We rewrite the graphs when they differ from \cite{ct}.

We refer to the corresponding table in the Appendix for the different arithmetic types.

\begin{itemize}
	\item[$\A_1$]  We blowup four points $p_1,\ldots,p_4$ in $\P^2$ with the first three collinear. The only $(-2)$-curve is the strict transform of the line through these points, its class is $E_0-E_1-E_2-E_3$; moreover there exist seven exceptional curves, and the graph of negative curves is

\begin{tikzpicture}[baseline=0]
\coordinate (R1) at (0,0);
\coordinate (E1) at (2,1);
\coordinate (E2) at (2,0);
\coordinate (E3) at (2,-1);
\coordinate (E4) at (4,1);
\coordinate (E5) at (4,0);
\coordinate (E6) at (4,-1);
\coordinate (E7) at (6,0);
\draw (R1) -- (E1) -- (E4) -- (E7);
\draw (R1) -- (E2) -- (E5) -- (E7);
\draw (R1) -- (E3) -- (E6) -- (E7);
\draw (R1) node[left] {\tiny$E_0-E_1-E_2-E_3$} circle (2pt) ;
\draw[fill] (E1) node[above] {\tiny$E_1$} circle (2pt) ;
\draw[fill] (E2) node[above] {\tiny$E_2$} circle (2pt) ;
\draw[fill] (E3) node[below] {\tiny$E_3$} circle (2pt) ;
\draw[fill] (E4) node[above] {\tiny$E_0-E_1-E_4$} circle (2pt) ;
\draw[fill] (E5) node[above] {\tiny$E_0-E_2-E_4$} circle (2pt) ;
\draw[fill] (E6) node[below] {\tiny$E_0-E_3-E_4$} circle (2pt) ;
\draw[fill] (E7) node[right] {\tiny$E_4$} circle (2pt) ;
\end{tikzpicture}

The stabilizer of the root $E_0-E_1-E_2-E_3$ is generated by the reflections $s_{E_1-E_2}$, $s_{E_2-E_3}$, and is isomorphic to the symmetric group $\mathfrak{S}_3$. There are three conjugacy classes, giving rise to three arithmetic types.

The corresponding surfaces are easily constructed, playing on the definition fields of the points $p_1,p_2,p_3$ (we always choose $p_4\in \P^2(\F_q)$)
\begin{itemize}
	\item[1 :] the three points are in $\P^2(\F_q)$;
	\item[2 :] there are two conjugate points in $\P^2(\F_{q^2})$, and one in $\P^2(\F_q)$;
	\item[3 :] the three points are conjugate in $\P^2(\F_{q^3})$.
\end{itemize}

  \item[$2\A_1$]  Here we follow \cite[Proposition 8.5 2.]{ct}; we blowup $p_1\prec p_2,p_3\prec p_4$ such that the strict transform of the line $(p_1p_3)$ does not meet the exceptional divisor of the first blowup at $p_2$ nor of the third at $p_4$. We get two disjoint $(-2)$-curves with classes $E_1-E_2$ and $E_3-E_4$, and five exceptional curves. 
The stabilizer of the set of negative curves is the order two subgroup of $W(\A_4)$ generated by $s_{E_1-E_3}\circ s_{E_2-E_4}$.

We get the two arithmetic types in the following way
\begin{itemize}
	\item[4 :] if $p_1\prec p_2,p_3\prec p_4$ are defined over $\F_q$, the surface is split;
	\item[5 :] we choose couples of conjugate points $p_1\prec p_2,~p_3=p_1^\sigma\prec p_4=p_2^\sigma$ defined over $\F_{q^2}$.
\end{itemize}
  \item[$\A_2$]  We blow up $p_1,p_3,p_4$ collinear, then $p_2\succ p_1$ which does not lie on the strict transform of the line $(p_1p_3)$; we obtain the following graph of negative curves
	
	\begin{tikzpicture}[baseline=0]
\coordinate (E1) at (0,0);
\coordinate (E2) at (1.5,0);
\coordinate (R1) at (3,0);
\coordinate (R2) at (4.5,0);
\coordinate (E3) at (6,1);
\coordinate (E4) at (6,-1);
\draw (E1) -- (E2) -- (R1) -- (R2) -- (E3);
\draw (R2) -- (E4);
\draw (R1) node[above] {\tiny$E_1-E_2$} circle (2pt) ;
\draw (R2) node[right] {\tiny$E_0-E_1-E_3-E_4$} circle (2pt) ;
\draw[fill] (E1) node[above] {\tiny$E_0-E_1-E_2$} circle (2pt) ;
\draw[fill] (E2) node[above] {\tiny$E_2$} circle (2pt) ;
\draw[fill] (E3) node[right] {\tiny$E_3$} circle (2pt) ;
\draw[fill] (E4) node[right] {\tiny$E_4$} circle (2pt) ;
\end{tikzpicture}

The stabilizer of the set of $(-2)$-curves is the subgroup of the Weyl group generated by $s_{E_3-E_4}$; it has order $2$ and we get two arithmetic types. These are easily described
\begin{itemize}
	\item[6 :] if the $p_i$ are defined over $\F_q$, the surface is split;
	\item[7 :] if we choose $p_4=p_3^\sigma$ defined over $\F_{q^2}$, it is not.
\end{itemize}

  \item[$\A_1\A_2$] We blowup $p_1\prec p_2\prec p_3$ collinear and $p_4\in \P^2$ outside the line $(p_1p_2)$.
	
	%we get
%\begin{equation*}
%\xymatrix{
%\stackrel{E_4}{\bullet} \ar@{-}[r] & \stackrel{E_0-E_1-E_4}{\bullet} \ar@{-}[r] & \stackrel{E_1-E_2}{\circ} \ar@{-}[r] & \stackrel{E_2-E_3}{\circ} \ar@{-}[r] & \stackrel{E_3}{\bullet} \ar@{-}[r] & \stackrel{E_0-E_1-E_2-E_3}{\circ} \\}
%\end{equation*}

The stabilizer is trivial, and the only arithmetic type is split.

  \item[$\A_3$] We blowup $p_1\prec p_2\prec p_3 \prec p_4$ where $p_1,p_2,p_3$ are not collinear. The stabilizer is trivial, and the only arithmetic type is split.

  \item[$\A_4$] We blowup $p_1\prec p_2\prec p_3\prec p_4$ where $p_1,p_2,p_3$ are collinear. The stabilizer is trivial, and the only arithmetic type is split.

\end{itemize}

\section{Construction of singular del Pezzo surfaces of degree four.}
\label{sec4}

We adopt a different point of view in this section. Actually not all weak degree four del Pezzo surfaces are birational to the projective plane over a finite field, and the preceding constructions, which use sequences of blowups and contractions, no longer suffice to describe all types.

We shall mostly use a well known property of these surfaces: their anticanonical model is the base locus of a pencil of quadrics in projective space $\P^4$. The classification of these objects is different in characteristic two, and we assume that the characteristic of our base field is odd in this section.

We shall show that singular del Pezzo surfaces of every arithmetic type exist over any finite field with odd cardinality.

\subsection{Pencils of quadrics, their Segre symbols, and geometric types}

In this section, we work over the field $k$, an algebraic closure of a finite field of odd characteristic.

We recall \cite[Section 8.6.1]{dolga}. The results there are described over the field of complex numbers, and our aim is to show that they remain valid over any algebraically closed field of odd characteristic.

The anticanonical model of a weak del Pezzo surface of degree $4$ is the base locus of a pencil of quadrics in projective space $\P^4$ \cite[Theorem 3.5]{kollar}. We denote by $Q_0$ and $Q_\infty$ two distinct quadrics of this pencil. This gives us two quadratic forms over the vector space $k^5$, and two symmetric $5\times 5$ matrices; we denote the characteristic polynomial of the pencil corresponding to this basis by $F(\lambda,\mu):=\det(\lambda Q_0+\mu Q_\infty)$.

A theorem of Kronecker \cite[Theorem 3.1]{water} tells us that the vector space $k^5$ can be written as an orthogonal direct sum of subspaces of two types: 
\begin{itemize}
	\item a \emph{non singular} space, say of dimension $d$, over which the determinant has degree $d$;
	\item some \emph{basic singular} spaces; here the equations of the quadrics take a very special form with respect to a well chosen basis.
\end{itemize}
A direct calculation from such equations shows that the base locus of a pencil of quadrics has one-dimensional singular locus as long as the space contains a basic singular space. Thus the space $k^5$ must be non-singular for the pair of quadratic forms. Note that this is no longer true in characteristic two, even for ordinary degree $4$ del Pezzo surfaces \cite{dd2}, since a quadratic form over an odd dimensional vector space must be degenerate.

We assume in the following that the quadric $Q_\infty$ is non-singular; thus the polynomial $P(t):=F(1,t)$ has degree $5$. This is a harmless assumption since we assume $k$ algebraically closed.

We follow Waterhouse \cite[Section 1]{water} and associate to our pair of quadratic forms a quadratic form $\Phi$ on a $k[T]$-module of finite length. If $\varphi_0$, $\varphi_\infty$ denote the bilinear forms on $k^5$ associated to $Q_0$ and $Q_\infty$, then $\varphi_\infty$ is non degenerate, and there is an unique endomorphism $u$ of $k^5$ such that $\varphi_\infty(u(x),y)=\varphi_0(x,y)$ for any vectors $x,y$.  Moreover $u$ is symmetric with respect to $\varphi_\infty$.

The vector space $k^5$, endowed with the action of $u$, becomes a $k[T]$-module of finite length that we denote by $M$. The non degenerate form $\varphi_\infty$ defines an isomorphism $\Phi$ of $k[T]$-modules between $M$ and its dual $M^\ast:=\Hom_{k[T]}(M,k(T)/k[T])$.

Consider the factorisation of $P$ over $k$, $P(T)=\prod_{i=1}^t (T-\theta_i)^{m_i}$; we get an isomorphism of $k[T]$-modules
\[
M=\oplus_{i=1}^t\oplus_{j=1}^{e_i} \left(k[T]/((T-\theta_i)^{m_{ij}})\right)^{n_{ij}}
\]
As a consequence, we can associate to $M$ the following \emph{Segre symbol}
\begin{equation}
\label{segresymbol}
[(\underbrace{m_{11}\ldots m_{11}}_{n_{11} ~\times}\ldots\underbrace{m_{1e_1}\ldots m_{1e_1}}_{n_{1e_1} \times})\ldots(\underbrace{m_{t1}\ldots m_{t1}}_{n_{t1} \times}\ldots\underbrace{m_{te_t}\ldots m_{te_t}}_{n_{te_t} \times})]
\end{equation}
Note that we remove the parentheses around the $i$th term when it contains exactly one integer, ie when we have $e_i=n_{i1}=1$.

As in \cite[Section 2]{water}, one can write the restrictions of the quadratic forms to a submodule of the form $k[T]/(T-\theta)^{m}$ as (here $\res$ denotes the residue in the sense of Laurent series)
\[
Q_0(x)=\res(T(T-\theta)^{-m} x^2),~Q_\infty(x)=\res((T-\theta)^{-m} x^2)
\]
since every element in $\left(k[T]/(T-\theta)^{m}\right)^\times$ is a square.

In the basis $((T-\theta)^{m-1},\ldots,T-\theta,1)$, the expressions of these forms are exactly those given in \cite[Equation (8.21)]{dolga}. Now the analytic expressions for the rational double points of type $\A_n$ or $\D_n$ (the only ones occuring for degree $4$ singular del Pezzo surfaces) are exactly the same in characteristic $0$ or odd \cite{artin}, and we deduce that the following one to one correspondance between the geometric types and (some of the) Segre symbols \cite[Table 8.6]{dolga} remains true in our setting
$$
\begin{array}{l|l|l||}
\text{\bf Type} & \text{\bf Number} &\text{\bf Segre} \\
\text{\bf of singularity} & \text{\bf of lines} & \text{\bf symbol} \\
\hline
\text{\bf Ordinary}&16&[11111] \\
\hline
\mathbf{A}_1&12&[2111] \\
\hline
2\mathbf{A}_1&9&[221] \\
\hline
2\mathbf{A}_1&8&[(11)111] \\
\hline
\mathbf{A}_2&8&[311] \\
\hline
3\mathbf{A}_1&6&[(11)21] \\
\hline
\mathbf{A}_1\mathbf{A}_2&6&[32] \\
\hline
\mathbf{A}_3&5&[41] \\
\hline
\end{array}
\begin{array}{|l|l|l}
\text{\bf Type} & \text{\bf Number} &\text{\bf Segre} \\
\text{\bf of singularity} & \text{\bf of lines} & \text{\bf symbol} \\
\hline
\mathbf{A}_3&4&[(21)11] \\
\hline
4\mathbf{A}_1&4&[(11)(11)1] \\
\hline
2\mathbf{A}_1\mathbf{A}_2&4&[3(11)] \\
\hline
\mathbf{A}_1\mathbf{A}_3&3&[(21)2] \\
\hline
\mathbf{A}_4&3&[5] \\
\hline
\mathbf{D}_4&2&[(31)1] \\
\hline
2\mathbf{A}_1\mathbf{A}_3&2&[(21)(11)] \\
\hline
\mathbf{D}_5&1&[(41)]\\
\hline
\end{array}
$$
Some Segre symbols do not give rise to singular degree $4$ del Pezzo surfaces. For instance the symbol $[(11111)]$ gives rise to two collinear quadratic forms over $k^5$, and the base locus of the pencil is a quadric in $\P^4$.

We shall often use the following remark along this section

\begin{remark}
\label{rankquad}
These symbols carry the following informations: the number of different terms (ie the number $t$ in (\ref{segresymbol}),  each couple of parentheses counts for one term) is equal to the number of singular quadrics in the pencil, and for such a quadric, its co-rank is the number of terms in the corresponding parenthesis. Actually we remark that the possible ranks for the singular quadrics in the pencil are $3$ or $4$.
\end{remark}

In the following, we shall often use the Segre symbol instead of the geometric type for our contructions. Moreover, we always choose the same set of $(-2)$-curves as in \cite[Proposition 6.1]{ct} to represent the orbit.

\subsection{Morphisms to the projective line}

We continue to work over an algebraically closed field of odd characteristic in this section. We keep on with the notations of the preceding section.

A convenient way to classify the arithmetic types for an ordinary del Pezzo surface is to visualize the Galois action on a graph consisting of its conic bundles \cite[pages 8 and 9]{KST}. A non ordinary del Pezzo surface no longer has conic bundles, due to the presence of $(-2)$-curves; the aim of this section is to define a new graph for each geometric type, which is very close to the original one, whose vertices represent 
\begin{itemize}
	\item the fibers of rational maps or morphisms from $X$ to the projective line,
	\item the classes of $(-2)$-curves.
\end{itemize}
We will use these graphs in the next sections in order to control the arithmetic types.

Let $\varphi:X\rightarrow \P^1$ denote a surjective morphism. Since $X$ is a rational surface, the generic fiber of $\varphi$ is isomorphic to $\P^1$. Moreover all its fibers are linearly equivalent. We deduce from the adjunction formula that their class $C$ satisfies $C^{\cdot 2}=0$ and $C\cdot K_X=-2$; moreover we have $\varphi=\varphi_{|C|}$.  

Recall \cite[Theorem 2]{BBFL}: there exist exactly ten classes in $\Pic(X)$ satisfying the above equations (note that they form \emph{complementary couples} in the sense that we have $C_i+C_i'=-K_X$ for each $i$)
\[
C_i=E_0-E_i,~C_i'=2E_0-\sum_{j=1}^5 E_j+E_i,~1\leq i\leq 5
\]

\begin{definition}
\label{conicbundles}
We set $\CC:=\cup_{i=1}^5\{C_i,C_i'\}$ in the following. We represent these classes in the following graph, in which the couples are materialized by vertical dashed lines.

\[
\begin{tikzpicture}[baseline=0]
\coordinate (C1) at (0,0.5);
\coordinate (C2) at (1,0.5);
\coordinate (C3) at (2,0.5);
\coordinate (C4) at (3,0.5);
\coordinate (C5) at (4,0.5);
\coordinate (C1p) at (0,-0.5);
\coordinate (C2p) at (1,-0.5);
\coordinate (C3p) at (2,-0.5);
\coordinate (C4p) at (3,-0.5);
\coordinate (C5p) at (4,-0.5);
\draw[dashed] (C1p) -- (C1);
\draw[dashed] (C2p) -- (C2);
\draw[dashed] (C3p) -- (C3);
\draw[dashed] (C4p) -- (C4);
\draw[dashed] (C5p) -- (C5);
\draw[fill] (C1) node[above] {\tiny$C_1$} +(-2pt,-2pt) rectangle +(2pt,2pt);
\draw[fill] (C2) node[above] {\tiny$C_2$} +(-2pt,-2pt) rectangle +(2pt,2pt);
\draw[fill] (C3) node[above] {\tiny$C_3$} +(-2pt,-2pt) rectangle +(2pt,2pt) ;
\draw[fill] (C4) node[above] {\tiny$C_4$} +(-2pt,-2pt) rectangle +(2pt,2pt) ;
\draw[fill] (C5) node[above] {\tiny$C_5$} +(-2pt,-2pt) rectangle +(2pt,2pt) ;
\draw[fill] (C1p) node[below] {\tiny$C'_1$} +(-2pt,-2pt) rectangle +(2pt,2pt) ;
\draw[fill] (C2p) node[below] {\tiny$C'_2$} +(-2pt,-2pt) rectangle +(2pt,2pt) ;
\draw[fill] (C3p) node[below] {\tiny$C'_3$} +(-2pt,-2pt) rectangle +(2pt,2pt) ;
\draw[fill] (C4p) node[below] {\tiny$C'_4$} +(-2pt,-2pt) rectangle +(2pt,2pt) ;
\draw[fill] (C5p) node[below] {\tiny$C'_5$} +(-2pt,-2pt) rectangle +(2pt,2pt) ;
\end{tikzpicture}
\]
\end{definition}

In the case of singular del Pezzo surfaces, the elements of $\CC$ no longer correspond to conic bundles. But they still correspond to rational maps from $X$ to the projective line, as we shall see below. We will adapt the graph according to the geometric type, by adding vertices corresponding to $(-2)$-curves, and an edge between a class in $\CC$ and such a curve when they intersect positively.

From the description of the elements in $\CC$ on one hand, and of the $(-2)$-curves (recall that their classes have the form $E_i-E_j$ or $E_0-E_i-E_j-E_k$) on the other, we see that we must have $C\cdot R\in \{-1,0,1\}$. 

Let $C\in \CC$ be such that we have $C\cdot R=-1$ for some $(-2)$-curve $R$; we deduce that $C-R\in \CC$. Thus $R$ is a fixed component of the linear system $|C|$, and $\varphi_{|C|}$ is a rational map, but not a morphism.

We deduce that there exists at most
\[
N_X:=\# \{ C\in \CC,~\forall R\in \R_{irr}(X),~C\cdot R\geq 0\}
\]
morphisms from $X$ to $\P^1$.

Our strategy is to construct exactly $N_X$ such morphisms by considering the singular quadrics in the pencil defining the anticanonical model $X_s$. Recall that $P$ denotes the characteristic polynomial of the pencil, and that $\theta_1,\ldots,\theta_t$ are its (pairwise distinct) roots.

Let $\theta_i$ denote a simple root of $P$, if any; the $i$th part of the Segre symbol (\ref{segresymbol}) is a $1$. From remark \ref{rankquad}, the quadric $Q_i:=Q_0-\theta_iQ_\infty$ is a singular quadric in $\P^4$ of rank $4$. Thus it is a cone whose vertex is a point $v_i$ and base a non singular quadric $q_i:=Q_i\cap H_i$ for some hyperplane $H_i$ not containing $v_i$. Moreover the vertex $v_i$ is not contained in $X_s$ since $\theta_i$ is a simple root of $P$.

The projection $\P^4\rightarrow H_i$ from $v_i$ restricts to a double covering $X_s\rightarrow q_i$. Now $q_i$ is non singular, thus it is isomorphic to $\P^1\times \P^1$, and the projections on the factors give rise to two morphisms from $X_s$ to $\P^1$. Composing them with the anticanonical morphism gives two morphisms $\varphi_{i1},\varphi_{i2}:X\rightarrow \P^1$.

Note that we have morphisms from $X_s$ to $\P^1$, thus their fibers are Cartier divisors on $X_s$, and their classes in $\Pic(X)$ satisfy $C\cdot R=0$ for all $(-2)$-curves $R$ on $X$.

Finally, the fibers of the above morphisms from $X_s$ to $\P^1$ are of the form $\Pi\cap Q_\infty$, where $\Pi$ is some plane containing one of the lines of $q_i$ and passing through $v_i$. These are plane conics. For any point $p\in X_s$, the hyperplane $H$ of $\P^4$ tangent to $Q_i$ at $p$ intersects $Q_i$ along the union of two planes $\Pi_1$ and $\Pi_2$, and we have $H\cap X_s= H\cap Q_i\cap Q_\infty=(\Pi_1\cap Q_\infty)\cup (\Pi_2\cap Q_\infty)$. We deduce that the sum of the classes of the fibers of these two morphisms is exactly the class of the anticanonical divisor: they are complementary.

Now assume that $\theta_i$ is a multiple root of $P$, and that $Q_i$ has rank $4$ (this means that the $i$th part of the Segre symbol (\ref{segresymbol}) satisfies $e_i=n_{i1}=1$ and $m_{i1}=m_i>1$). The same construction as in the preceding case yields two morphisms from $X_s\setminus \{v_i\}$ to $\P^1$ (note that the vertex $v_i$ of $Q_i$ is a singular point of $X_s$ here). 

Consider the blowup $\Bl_{v_i}(\P^4)\rightarrow \P^4$; then $\Bl_{v_i}(X_s)$ is the strict transform of $X_s$ \cite[Corollary II.7.15]{hart}, and the two morphisms above become morphisms from $\Bl_{v_i}(X_s)$ to $\P^1$. The fiber of $v_i$ in the blowup $\Bl_{v_i}(X_s)\rightarrow X_s$ contains only $(-2)$-curves, and we must have a morphism $X\rightarrow \Bl_{v_i}(X_s)$ since $X$ is the minimal desingularization of $X_s$. Composing, we get two morphisms $\varphi_{i1},\varphi_{i2}:X\rightarrow \P^1$.

The last case to be considered is when $\theta_i$ is a multiple root of $P$, and $Q_i$ has rank $3$; here the $i$th part of the Segre symbol (\ref{segresymbol}) contains exactly two terms between parentheses. In this case $Q_i$ is a cone with vertex $\ell_i\simeq \P^1$ and base a smooth plane conic $c_i$. The projection with vertex $\ell_i$ from $\P^4\setminus \ell_i$ to this plane induces a morphism from $X_s\setminus \ell_i\cap X_s$ to $c_i\simeq \P^1$ (a rational map $X_s \dashrightarrow \P^1$).

The blowup $\Bl_{\ell_i}(\P^4)\rightarrow \P^4$ restricts to $\Bl_{\ell_i\cap X_s}(X_s)\rightarrow X_s$ as above, and we get a morphism $\Bl_{\ell_i\cap X_s}(X_s)\rightarrow \P^1$ from the above rational map. Once again, the fibers above the points in $\ell_i\cap X_s$ in the morphism $\Bl_{\ell_i\cap X_s}(X_s) \rightarrow X_s$ only contain $(-2)$-curves, and we get a morphism $X\rightarrow \Bl_{\ell_i\cap X_s}(X_s)$ from the minimal desingularization, from which we get a morphism $\varphi_{i1} : X\rightarrow \P^1$.

Summing up, we have constructed $2a+2b+c$ morphisms from $X$ to $\P^1$, where $a$ is the number of simple roots of $P$, $b$ the number of multiple roots corresponding to a rank $4$ quadric in the pencil, and $c$ the number of rank $3$ quadrics in the pencil. 

The following table contains the graphs we mentioned at the beginning of the section. It is sufficient to prove the next proposition for each geometric type.

Note that for each type we use an empty square to denote the classes in $\CC$ that intersect negatively a $(-2)$-curve, and a full square for the other ones; we also denote by $r_i$ the curve with class $E_i-E_{i+1}$, and by $r_{ijk}$ the curve with class $E_0-E_i-E_j-E_k$

\begin{center}
\label{Tablegraphs4}

\begin{longtable}{|l|c|c|c|l|}
\caption{Conics-$(-2)$-curves graphs}\\
\hline
\text{Geom.~Type} & \text{Conics}-$(-2)$\text{ Curves~Graph} & $N$ & $(a,b,c)$ & $(-2)$-\text{curves} \\
\hline
\hline
\endhead
\hline
\endfoot
$\A_1$ $[2111]$ & 
\begin{tikzpicture}[baseline=0]
\coordinate (C1) at (0,0.5);
\coordinate (C2) at (1,0.5);
\coordinate (C3) at (2,0.5);
\coordinate (C4) at (3,0.5);
\coordinate (C5) at (4,0.5);
\coordinate (C1p) at (0,-0.5);
\coordinate (C2p) at (1,-0.5);
\coordinate (C3p) at (2,-0.5);
\coordinate (C4p) at (3,-0.5);
\coordinate (C5p) at (4,-0.5);
\coordinate (R) at ($(C4)!.5!(C5p)$);
\draw[dashed] (C1p) -- (C1);
\draw[dashed] (C2p) -- (C2);
\draw[dashed] (C3p) -- (C3);
\draw[dashed] (C4p) -- (C4);
\draw[dashed] (C5p) -- (C5);
\draw (C4) -- (R) -- (C5p);
\draw[fill] (C1) node[above] {\tiny$C_1$} +(-2pt,-2pt) rectangle +(2pt,2pt) ;
\draw[fill] (C2) node[above] {\tiny$C_2$} +(-2pt,-2pt) rectangle +(2pt,2pt) ;
\draw[fill] (C3) node[above] {\tiny$C_3$} +(-2pt,-2pt) rectangle +(2pt,2pt) ;
\draw[fill] (C4) node[above] {\tiny$C_4$} +(-2pt,-2pt) rectangle +(2pt,2pt) ;
\draw (C5) node[above] {\tiny$C_5$} +(-2pt,-2pt) rectangle +(2pt,2pt) ;
\draw[fill] (C1p) node[below] {\tiny$C'_1$} +(-2pt,-2pt) rectangle +(2pt,2pt) ;
\draw[fill] (C2p) node[below] {\tiny$C'_2$} +(-2pt,-2pt) rectangle +(2pt,2pt) ;
\draw[fill] (C3p) node[below] {\tiny$C'_3$} +(-2pt,-2pt) rectangle +(2pt,2pt) ;
\draw (C4p) node[below] {\tiny$C_4'$} +(-2pt,-2pt) rectangle +(2pt,2pt) ;
\draw[fill] (C5p) node[below] {\tiny$C_5'$} +(-2pt,-2pt) rectangle +(2pt,2pt) ;
\draw[fill=white] (R) node[above] {\tiny$R$} circle (2pt) ;
\end{tikzpicture}

 & $8$ & $(3,1,0)$ & $R = r_{45}$ \\
\hline
$2\A_1(9)$ $[221]$ &  

\begin{tikzpicture}[baseline=0]
\coordinate (C1) at (0,0.5);
\coordinate (C2) at (1,0.5);
\coordinate (C3) at (2,0.5);
\coordinate (C4) at (3,0.5);
\coordinate (C5) at (4,0.5);
\coordinate (C1p) at (0,-0.5);
\coordinate (C2p) at (1,-0.5);
\coordinate (C3p) at (2,-0.5);
\coordinate (C4p) at (3,-0.5);
\coordinate (C5p) at (4,-0.5);
\coordinate (R1) at ($(C2)!.5!(C3p)$);
\coordinate (R2) at ($(C4)!.5!(C5p)$);
\draw[dashed] (C1p) -- (C1);
\draw[dashed] (C2p) -- (C2);
\draw[dashed] (C3p) -- (C3);
\draw[dashed] (C4p) -- (C4);
\draw[dashed] (C5p) -- (C5);
\draw (C2) -- (R1) -- (C3p);
\draw (C4) -- (R2) -- (C5p);
\draw[fill] (C1) node[above] {\tiny$C_1$} +(-2pt,-2pt) rectangle +(2pt,2pt) ;
\draw[fill] (C2) node[above] {\tiny$C_2$} +(-2pt,-2pt) rectangle +(2pt,2pt) ;
\draw (C3) node[above] {\tiny$C_3$} +(-2pt,-2pt) rectangle +(2pt,2pt) ;
\draw[fill] (C4) node[above] {\tiny$C_4$} +(-2pt,-2pt) rectangle +(2pt,2pt) ;
\draw (C5) node[above] {\tiny$C_5$} +(-2pt,-2pt) rectangle +(2pt,2pt) ;
\draw[fill] (C1p) node[below] {\tiny$C'_1$} +(-2pt,-2pt) rectangle +(2pt,2pt)  ;
\draw (C2p) node[below] {\tiny$C_2'$} +(-2pt,-2pt) rectangle +(2pt,2pt) ;
\draw[fill] (C3p) node[below] {\tiny$C'_3$} +(-2pt,-2pt) rectangle +(2pt,2pt)  ;
\draw (C4p) node[below] {\tiny$C_4'$} +(-2pt,-2pt) rectangle +(2pt,2pt) ;
\draw[fill] (C5p) node[below] {\tiny$C'_5$} +(-2pt,-2pt) rectangle +(2pt,2pt)  ;
\draw[fill=white] (R1) circle (2pt) node[below] {\tiny$R_1$} ;
\draw[fill=white] (R2) circle (2pt) node[below] {\tiny$R_2$} ;
\end{tikzpicture}

& $6$ & $(1,2,0)$ & $\begin{array}{l} R_1=r_{23}\\ R_2=r_{45} \\ \end{array}$ \\

\hline
$2\A_1(8)$ $[111(11)]$ &

\begin{tikzpicture}[baseline=0]
\coordinate (C1) at (0,0.5);
\coordinate (C2) at (1,0.5);
\coordinate (C3) at (2,0.5);
\coordinate (C4) at (3,0.5);
\coordinate (C5) at (4,0.5);
\coordinate (C1p) at (0,-0.5);
\coordinate (C2p) at (1,-0.5);
\coordinate (C3p) at (2,-0.5);
\coordinate (C4p) at (3,-0.5);
\coordinate (C5p) at (4,-0.5);
\coordinate (R1) at ($(C4)!.5!(C5)$);
\coordinate (R2) at ($(C4)!.5!(C5p)$);
\draw[dashed] (C1p) -- (C1);
\draw[dashed][dashed] (C2p) -- (C2);
\draw[dashed] (C3p) -- (C3);
\draw[dashed] (C4p) -- (C4);
\draw[dashed] (C5p) -- (C5);
\draw (C4) -- (R1) -- (C5);
\draw (C4) -- (R2) -- (C5p);
\draw[fill] (C1) node[above] {\tiny$C_1$} +(-2pt,-2pt) rectangle +(2pt,2pt) ;
\draw[fill] (C2) node[above] {\tiny$C_2$} +(-2pt,-2pt) rectangle +(2pt,2pt) ;
\draw[fill] (C3) node[above] {\tiny$C_3$} +(-2pt,-2pt) rectangle +(2pt,2pt) ;
\draw[fill] (C4) node[above] {\tiny$C_4$} +(-2pt,-2pt) rectangle +(2pt,2pt) ;
\draw (C5) node[above] {\tiny$C_5$} +(-2pt,-2pt) rectangle +(2pt,2pt) ;
\draw[fill] (C1p) node[below] {\tiny$C'_1$} +(-2pt,-2pt) rectangle +(2pt,2pt)  ;
\draw[fill] (C2p) node[below] {\tiny$C'_2$} +(-2pt,-2pt) rectangle +(2pt,2pt)  ;
\draw[fill] (C3p) node[below] {\tiny$C'_3$} +(-2pt,-2pt) rectangle +(2pt,2pt)  ;
\draw (C4p) node[below] {\tiny$C_4'$} +(-2pt,-2pt) rectangle +(2pt,2pt) ;
\draw (C5p) node[below] {\tiny$C_5'$} +(-2pt,-2pt) rectangle +(2pt,2pt) ;
\draw[fill=white] (R1) circle (2pt) node[above] {\tiny$R_1$} ;
\draw[fill=white] (R2) circle (2pt) node[below] {\tiny$R_2$} ;
\end{tikzpicture}

& $7$ & $(3,0,1)$ & $\begin{array}{l} R_1=r_{123}\\ R_2=r_{45} \\ \end{array}$ \\

\hline
$\A_2$ $[311]$ & 

\begin{tikzpicture}[baseline=0]
\coordinate (C1) at (0,0.5);
\coordinate (C2) at (1,0.5);
\coordinate (C3) at (2,0.5);
\coordinate (C4) at (3,0.5);
\coordinate (C5) at (4,0.5);
\coordinate (C1p) at (0,-0.5);
\coordinate (C2p) at (1,-0.5);
\coordinate (C3p) at (2,-0.5);
\coordinate (C4p) at (3,-0.5);
\coordinate (C5p) at (4,-0.5);
\coordinate (R1) at ($(C3)!.5!(C4p)$);
\coordinate (R2) at ($(C4)!.5!(C5p)$);
\draw[dashed] (C1p) -- (C1);
\draw[dashed] (C2p) -- (C2);
\draw[dashed] (C3p) -- (C3);
\draw[dashed] (C4p) -- (C4);
\draw[dashed] (C5p) -- (C5);
\draw (C3) -- (R1) -- (C4p); 
\draw (C4) -- (R2) -- (C5p);
\draw[fill] (C1) node[above] {\tiny$C_1$} +(-2pt,-2pt) rectangle +(2pt,2pt) ;
\draw[fill] (C2) node[above] {\tiny$C_2$} +(-2pt,-2pt) rectangle +(2pt,2pt) ;
\draw[fill] (C3) node[above] {\tiny$C_3$} +(-2pt,-2pt) rectangle +(2pt,2pt) ;
\draw (C4) node[above] {\tiny$C_4$} +(-2pt,-2pt) rectangle +(2pt,2pt) ;
\draw (C5) node[above] {\tiny$C_5$} +(-2pt,-2pt) rectangle +(2pt,2pt) ;
\draw[fill] (C1p) node[below] {\tiny$C'_1$} +(-2pt,-2pt) rectangle +(2pt,2pt)  ;
\draw[fill] (C2p) node[below] {\tiny$C'_2$} +(-2pt,-2pt) rectangle +(2pt,2pt)  ;
\draw (C3p) node[below] {\tiny$C_3'$} +(-2pt,-2pt) rectangle +(2pt,2pt) ;
\draw (C4p) node[below] {\tiny$C_4'$} +(-2pt,-2pt) rectangle +(2pt,2pt) ;
\draw[fill] (C5p) node[below] {\tiny$C_5'$} +(-2pt,-2pt) rectangle +(2pt,2pt) ;
\draw[fill=white] (R1) circle (2pt) node[below] {\tiny$R_1$} ;
\draw[fill=white] (R2) circle (2pt) node[below] {\tiny$R_2$} ;
\end{tikzpicture}

& $6$ & $(2,1,0)$ & $\begin{array}{l} R_1=r_{34}\\ R_2=r_{45} \\ \end{array}$ \\

\hline
$3\A_1$ $[(11)21]$ & 

\begin{tikzpicture}[baseline=0]
\coordinate (C1) at (0,0.5);
\coordinate (C2) at (1,0.5);
\coordinate (C3) at (2,0.5);
\coordinate (C4) at (3,0.5);
\coordinate (C5) at (4,0.5);
\coordinate (C1p) at (0,-0.5);
\coordinate (C2p) at (1,-0.5);
\coordinate (C3p) at (2,-0.5);
\coordinate (C4p) at (3,-0.5);
\coordinate (C5p) at (4,-0.5);
\coordinate (R1) at ($(C2)!.5!(C3p)$);
\coordinate (R2) at ($(C4)!.5!(C5p)$);
\coordinate (R3) at ($(C4)!.5!(C5)$);
\draw[dashed] (C1p) -- (C1);
\draw[dashed] (C2p) -- (C2);
\draw[dashed] (C3p) -- (C3);
\draw[dashed] (C4p) -- (C4);
\draw[dashed] (C5p) -- (C5);
\draw (C2) -- (R1) -- (C3p);
\draw (C4) -- (R3) -- (C5);
\draw (C4) -- (R2) -- (C5p);
\draw[fill] (C1) node[above] {\tiny$C_1$} +(-2pt,-2pt) rectangle +(2pt,2pt) ;
\draw[fill] (C2) node[above] {\tiny$C_2$} +(-2pt,-2pt) rectangle +(2pt,2pt) ;
\draw (C3) node[above] {\tiny$C_3$} +(-2pt,-2pt) rectangle +(2pt,2pt) ;
\draw[fill] (C4) node[above] {\tiny$C_4$} +(-2pt,-2pt) rectangle +(2pt,2pt) ;
\draw (C5) node[above] {\tiny$C_5$} +(-2pt,-2pt) rectangle +(2pt,2pt) ;
\draw[fill] (C1p) node[below] {\tiny$C'_1$} +(-2pt,-2pt) rectangle +(2pt,2pt)  ;
\draw (C2p) node[below] {\tiny$C_2'$} +(-2pt,-2pt) rectangle +(2pt,2pt) ;
\draw[fill] (C3p) node[below] {\tiny$C_3'$} +(-2pt,-2pt) rectangle +(2pt,2pt) ;
\draw (C4p) node[below] {\tiny$C_4'$} +(-2pt,-2pt) rectangle +(2pt,2pt) ;
\draw (C5p) node[below] {\tiny$C_5'$} +(-2pt,-2pt) rectangle +(2pt,2pt) ;
\draw[fill=white] (R1) circle (2pt) node[below] {\tiny$R_1$} ;
\draw[fill=white] (R2) circle (2pt) node[below] {\tiny$R_2$} ;
\draw[fill=white] (R3) circle (2pt) node[above] {\tiny$R_3$} ;
\end{tikzpicture}

& $5$ & $(1,1,1)$ & $\begin{array}{l} R_1=r_{23}\\ R_2=r_{45}\\ R_3=r_{123} \\ \end{array}$ \\

\hline

$\A_1\A_2$ $[32]$ & 

\begin{tikzpicture}[baseline=0]
\coordinate (C1) at (0,0.5);
\coordinate (C2) at (1,0.5);
\coordinate (C3) at (2,0.5);
\coordinate (C4) at (3,0.5);
\coordinate (C5) at (4,0.5);
\coordinate (C1p) at (0,-0.5);
\coordinate (C2p) at (1,-0.5);
\coordinate (C3p) at (2,-0.5);
\coordinate (C4p) at (3,-0.5);
\coordinate (C5p) at (4,-0.5);
\coordinate (R1) at ($(C1)!.5!(C2p)$);
\coordinate (R2) at ($(C3)!.5!(C4p)$);
\coordinate (R3) at ($(C4)!.5!(C5p)$);
\draw[dashed] (C1p) -- (C1);
\draw[dashed] (C2p) -- (C2);
\draw[dashed] (C3p) -- (C3);
\draw[dashed] (C4p) -- (C4);
\draw[dashed] (C5p) -- (C5);
\draw (C1) -- (R1) -- (C2p);
\draw (C3) -- (R2) -- (C4p);
\draw (C4) -- (R3) -- (C5p);
\draw[fill] (C1) node[above] {\tiny$C_1$} +(-2pt,-2pt) rectangle +(2pt,2pt) ;
\draw (C2) node[above] {\tiny$C_2$} +(-2pt,-2pt) rectangle +(2pt,2pt) ;
\draw[fill] (C3) node[above] {\tiny$C_3$} +(-2pt,-2pt) rectangle +(2pt,2pt) ;
\draw (C4) node[above] {\tiny$C_4$} +(-2pt,-2pt) rectangle +(2pt,2pt) ;
\draw (C5) node[above] {\tiny$C_5$} +(-2pt,-2pt) rectangle +(2pt,2pt) ;
\draw (C1p) node[below] {\tiny$C'_1$} +(-2pt,-2pt) rectangle +(2pt,2pt)  ;
\draw[fill] (C2p) node[below] {\tiny$C'_2$} +(-2pt,-2pt) rectangle +(2pt,2pt)  ;
\draw (C3p) node[below] {\tiny$C'_3$} +(-2pt,-2pt) rectangle +(2pt,2pt)  ;
\draw (C4p) node[below] {\tiny$C'_4$} +(-2pt,-2pt) rectangle +(2pt,2pt)  ;
\draw[fill] (C5p) node[below] {\tiny$C'_5$} +(-2pt,-2pt) rectangle +(2pt,2pt)  ;
\draw[fill=white] (R1) circle (2pt) node[below] {\tiny$R_1$} ;
\draw[fill=white] (R2) circle (2pt) node[below] {\tiny$R_2$} ;
\draw[fill=white] (R3) circle (2pt) node[below] {\tiny$R_3$} ;
\end{tikzpicture}

& $4$ & $(0,2,0)$ & $\begin{array}{l} R_1=r_{12}\\ R_2=r_{34} \\ R_3=r_{45} \\ \end{array}$ \\

\hline

$\A_3(5)$ $[41]$ & 

\begin{tikzpicture}[baseline=0]
\coordinate (C1) at (0,0.5);
\coordinate (C2) at (1,0.5);
\coordinate (C3) at (2,0.5);
\coordinate (C4) at (3,0.5);
\coordinate (C5) at (4,0.5);
\coordinate (C1p) at (0,-0.5);
\coordinate (C2p) at (1,-0.5);
\coordinate (C3p) at (2,-0.5);
\coordinate (C4p) at (3,-0.5);
\coordinate (C5p) at (4,-0.5);
\coordinate (R1) at ($(C2)!.5!(C3p)$);
\coordinate (R2) at ($(C3)!.5!(C4p)$);
\coordinate (R3) at ($(C4)!.5!(C5p)$);
\draw[dashed] (C1p) -- (C1);
\draw[dashed] (C2p) -- (C2);
\draw[dashed] (C3p) -- (C3);
\draw[dashed] (C4p) -- (C4);
\draw[dashed] (C5p) -- (C5);

\draw (C2) -- (R1) -- (C3p);
\draw (C3) -- (R2) -- (C4p);
\draw (C4) -- (R3) -- (C5p);
\draw[fill] (C1) node[above] {\tiny$C_1$} +(-2pt,-2pt) rectangle +(2pt,2pt) ;
\draw[fill] (C2) node[above] {\tiny$C_2$} +(-2pt,-2pt) rectangle +(2pt,2pt) ;
\draw (C3) node[above] {\tiny$C_3$} +(-2pt,-2pt) rectangle +(2pt,2pt) ;
\draw (C4) node[above] {\tiny$C_4$} +(-2pt,-2pt) rectangle +(2pt,2pt) ;
\draw (C5) node[above] {\tiny$C_5$} +(-2pt,-2pt) rectangle +(2pt,2pt) ;
\draw (C1p) node[below] {\tiny$C'_1$} +(-2pt,-2pt) rectangle +(2pt,2pt)  ;
\draw (C2p) node[below] {\tiny$C'_2$} +(-2pt,-2pt) rectangle +(2pt,2pt)  ;
\draw (C3p) node[below] {\tiny$C'_3$} +(-2pt,-2pt) rectangle +(2pt,2pt)  ;
\draw (C4p) node[below] {\tiny$C'_4$} +(-2pt,-2pt) rectangle +(2pt,2pt)  ;
\draw[fill] (C5p) node[below] {\tiny$C'_5$} +(-2pt,-2pt) rectangle +(2pt,2pt)  ;
\draw[fill=white] (R1) circle (2pt) node[below] {\tiny$R_1$} ;
\draw[fill=white] (R2) circle (2pt) node[below] {\tiny$R_2$} ;
\draw[fill=white] (R3) circle (2pt) node[below] {\tiny$R_3$} ;
\end{tikzpicture}

& $4$ & $(1,1,0)$ & $\begin{array}{l} R_1=r_{23}\\ R_2=r_{34} \\ R_3=r_{45} \\ \end{array}$ \\

\hline

$\A_3(4)$ $[(21)11]$ &

\begin{tikzpicture}[baseline=0]
\coordinate (C1) at (0,0.5);
\coordinate (C2) at (1,0.5);
\coordinate (C3) at (2,0.5);
\coordinate (C4) at (3,0.5);
\coordinate (C5) at (4,0.5);
\coordinate (C1p) at (0,-0.5);
\coordinate (C2p) at (1,-0.5);
\coordinate (C3p) at (2,-0.5);
\coordinate (C4p) at (3,-0.5);
\coordinate (C5p) at (4,-0.5);
\coordinate (R1) at ($(C4)!.5!(C5)$);
\coordinate (R2) at ($(C3)!.5!(C4p)$);
\coordinate (R3) at ($(C4)!.5!(C5p)$);
\draw[dashed] (C1p) -- (C1);
\draw[dashed] (C2p) -- (C2);
\draw[dashed] (C3p) -- (C3);
\draw[dashed] (C4p) -- (C4);
\draw[dashed] (C5p) -- (C5);
\draw (C4) -- (C5);
\draw (C3) -- (R2) -- (C4p);
\draw (C4) -- (R3) -- (C5p);
\draw[fill] (C1) node[above] {\tiny$C_1$} +(-2pt,-2pt) rectangle +(2pt,2pt) ;
\draw[fill] (C2) node[above] {\tiny$C_2$} +(-2pt,-2pt) rectangle +(2pt,2pt) ;
\draw[fill] (C3) node[above] {\tiny$C_3$} +(-2pt,-2pt) rectangle +(2pt,2pt) ;
\draw (C4) node[above] {\tiny$C_4$} +(-2pt,-2pt) rectangle +(2pt,2pt) ;
\draw (C5) node[above] {\tiny$C_5$} +(-2pt,-2pt) rectangle +(2pt,2pt) ;
\draw[fill] (C1p) node[below] {\tiny$C'_1$} +(-2pt,-2pt) rectangle +(2pt,2pt)  ;
\draw[fill] (C2p) node[below] {\tiny$C'_2$} +(-2pt,-2pt) rectangle +(2pt,2pt)  ;
\draw (C3p) node[below] {\tiny$C'_3$} +(-2pt,-2pt) rectangle +(2pt,2pt)  ;
\draw (C4p) node[below] {\tiny$C'_4$} +(-2pt,-2pt) rectangle +(2pt,2pt)  ;
\draw (C5p) node[below] {\tiny$C'_5$} +(-2pt,-2pt) rectangle +(2pt,2pt)  ;
\draw[fill=white] (R1) circle (2pt) node[above] {\tiny$R_3$} ;
\draw[fill=white] (R2) circle (2pt) node[below] {\tiny$R_2$} ;
\draw[fill=white] (R3) circle (2pt) node[below] {\tiny$R_1$} ;
\end{tikzpicture}

& $5$ & $(2,0,1)$ & $\begin{array}{l} R_1=r_{45}\\ R_2=r_{34} \\ R_3=r_{123} \\ \end{array}$ \\

\hline

$4\A_1$ $[(11)(11)1]$ & 

\begin{tikzpicture}[baseline=0]
\coordinate (C1) at (0,0.5);
\coordinate (C2) at (1,0.5);
\coordinate (C3) at (2,0.5);
\coordinate (C4) at (3,0.5);
\coordinate (C5) at (4,0.5);
\coordinate (C1p) at (0,-0.5);
\coordinate (C2p) at (1,-0.5);
\coordinate (C3p) at (2,-0.5);
\coordinate (C4p) at (3,-0.5);
\coordinate (C5p) at (4,-0.5);
\coordinate (R1) at ($(C4)!.5!(C5)$);
\coordinate (R2) at ($(C1)!.5!(C2p)$);
\coordinate (R3) at ($(C1)!.5!(C2)$);
\coordinate (R4) at ($(C4)!.5!(C5p)$);
\draw[dashed] (C1p) -- (C1);
\draw[dashed] (C2p) -- (C2);
\draw[dashed] (C3p) -- (C3);
\draw[dashed] (C4p) -- (C4);
\draw[dashed] (C5p) -- (C5);
\draw (C1) -- (R2) -- (C2p);
\draw (C1) -- (C2);
\draw (C4) -- (R4) -- (C5p);
\draw (C4) -- (C5);
\draw[fill] (C1) node[above] {\tiny$C_1$} +(-2pt,-2pt) rectangle +(2pt,2pt) ;
\draw (C2) node[above] {\tiny$C_2$} +(-2pt,-2pt) rectangle +(2pt,2pt) ;
\draw[fill] (C3) node[above] {\tiny$C_3$} +(-2pt,-2pt) rectangle +(2pt,2pt) ;
\draw[fill] (C4) node[above] {\tiny$C_4$} +(-2pt,-2pt) rectangle +(2pt,2pt) ;
\draw (C5) node[above] {\tiny$C_5$} +(-2pt,-2pt) rectangle +(2pt,2pt) ;
\draw (C1p) node[below] {\tiny$C'_1$} +(-2pt,-2pt) rectangle +(2pt,2pt)  ;
\draw (C2p) node[below] {\tiny$C'_2$} +(-2pt,-2pt) rectangle +(2pt,2pt)  ;
\draw[fill] (C3p) node[below] {\tiny$C'_3$} +(-2pt,-2pt) rectangle +(2pt,2pt)  ;
\draw (C4p) node[below] {\tiny$C'_4$} +(-2pt,-2pt) rectangle +(2pt,2pt)  ;
\draw (C5p) node[below] {\tiny$C'_5$} +(-2pt,-2pt) rectangle +(2pt,2pt)  ;
\draw[fill=white] (R1) circle (2pt) node[above] {\tiny$R_4$} ;
\draw[fill=white] (R2) circle (2pt) node[below] {\tiny$R_1$} ;
\draw[fill=white] (R3) circle (2pt) node[above] {\tiny$R_2$} ;
\draw[fill=white] (R4) circle (2pt) node[below] {\tiny$R_3$} ;
\end{tikzpicture}

& $4$ & $(1,0,2)$ & $\begin{array}{l} R_1=r_{12}\\ R_2=r_{345} \\ R_3=r_{45} \\ R_4=r_{123} \\ \end{array}$ \\

\hline

$2\A_1\A_2$ $[3(11)]$ &

\begin{tikzpicture}[baseline=0]
\coordinate (C1) at (0,0.5);
\coordinate (C2) at (1,0.5);
\coordinate (C3) at (2,0.5);
\coordinate (C4) at (3,0.5);
\coordinate (C5) at (4,0.5);
\coordinate (C1p) at (0,-0.5);
\coordinate (C2p) at (1,-0.5);
\coordinate (C3p) at (2,-0.5);
\coordinate (C4p) at (3,-0.5);
\coordinate (C5p) at (4,-0.5);
\coordinate (R1) at ($(C1)!.5!(C2p)$);
\coordinate (R2) at ($(C2)!.5!(C3p)$);
\coordinate (R3) at ($(C4)!.5!(C5)$);
\coordinate (R4) at ($(C4)!.5!(C5p)$);
\draw[dashed] (C1p) -- (C1);
\draw[dashed] (C2p) -- (C2);
\draw[dashed] (C3p) -- (C3);
\draw[dashed] (C4p) -- (C4);
\draw[dashed] (C5p) -- (C5);
\draw (C1) -- (R1) -- (C2p);
\draw (C2) -- (R2) -- (C3p);
\draw (C4) -- (R4) -- (C5p);
\draw (C4) -- (C5);
\draw[fill] (C1) node[above] {\tiny$C_1$} +(-2pt,-2pt) rectangle +(2pt,2pt) ;
\draw (C2) node[above] {\tiny$C_2$} +(-2pt,-2pt) rectangle +(2pt,2pt) ;
\draw (C3) node[above] {\tiny$C_3$} +(-2pt,-2pt) rectangle +(2pt,2pt) ;
\draw[fill] (C4) node[above] {\tiny$C_4$} +(-2pt,-2pt) rectangle +(2pt,2pt) ;
\draw (C5) node[above] {\tiny$C_5$} +(-2pt,-2pt) rectangle +(2pt,2pt) ;
\draw (C1p) node[below] {\tiny$C'_1$} +(-2pt,-2pt) rectangle +(2pt,2pt)  ;
\draw (C2p) node[below] {\tiny$C'_2$} +(-2pt,-2pt) rectangle +(2pt,2pt)  ;
\draw[fill] (C3p) node[below] {\tiny$C'_3$} +(-2pt,-2pt) rectangle +(2pt,2pt)  ;
\draw (C4p) node[below] {\tiny$C'_4$} +(-2pt,-2pt) rectangle +(2pt,2pt)  ;
\draw (C5p) node[below] {\tiny$C'_5$} +(-2pt,-2pt) rectangle +(2pt,2pt)  ;
\draw[fill=white] (R1) circle (2pt) node[below] {\tiny$R_1$} ;
\draw[fill=white] (R2) circle (2pt) node[below] {\tiny$R_2$} ;
\draw[fill=white] (R3) circle (2pt) node[above] {\tiny$R_4$} ;
\draw[fill=white] (R4) circle (2pt) node[below] {\tiny$R_3$} ;
\end{tikzpicture}

& $3$ & $(0,1,1)$ & $\begin{array}{l} R_1=r_{12}\\ R_2=r_{23} \\ R_3=r_{45} \\ R_4=r_{123} \\ \end{array}$ \\

\hline

$\A_1\A_3$ $[(21)2]$ &

\begin{tikzpicture}[baseline=0]
\coordinate (C1) at (0,0.5);
\coordinate (C2) at (1,0.5);
\coordinate (C3) at (2,0.5);
\coordinate (C4) at (3,0.5);
\coordinate (C5) at (4,0.5);
\coordinate (C1p) at (0,-0.5);
\coordinate (C2p) at (1,-0.5);
\coordinate (C3p) at (2,-0.5);
\coordinate (C4p) at (3,-0.5);
\coordinate (C5p) at (4,-0.5);
\coordinate (R1) at ($(C1)!.5!(C2p)$);
\coordinate (R2) at ($(C3)!.5!(C4p)$);
\coordinate (R3) at ($(C4)!.5!(C5)$);
\coordinate (R4) at ($(C4)!.5!(C5p)$);
\draw[dashed] (C1p) -- (C1);
\draw[dashed] (C2p) -- (C2);
\draw[dashed] (C3p) -- (C3);
\draw[dashed] (C4p) -- (C4);
\draw[dashed] (C5p) -- (C5);
\draw (C1) -- (R1) -- (C2p);
\draw  (C3) -- (R2) -- (C4p);
\draw (C4) -- (R4) -- (C5p);
\draw (C4) -- (C5);
\draw[fill] (C1) node[above] {\tiny$C_1$} +(-2pt,-2pt) rectangle +(2pt,2pt) ;
\draw (C2) node[above] {\tiny$C_2$} +(-2pt,-2pt) rectangle +(2pt,2pt) ;
\draw[fill] (C3) node[above] {\tiny$C_3$} +(-2pt,-2pt) rectangle +(2pt,2pt) ;
\draw (C4) node[above] {\tiny$C_4$} +(-2pt,-2pt) rectangle +(2pt,2pt) ;
\draw (C5) node[above] {\tiny$C_5$} +(-2pt,-2pt) rectangle +(2pt,2pt) ;
\draw (C1p) node[below] {\tiny$C'_1$} +(-2pt,-2pt) rectangle +(2pt,2pt)  ;
\draw[fill] (C2p) node[below] {\tiny$C'_2$} +(-2pt,-2pt) rectangle +(2pt,2pt)  ;
\draw (C3p) node[below] {\tiny$C'_3$} +(-2pt,-2pt) rectangle +(2pt,2pt)  ;
\draw (C4p) node[below] {\tiny$C'_4$} +(-2pt,-2pt) rectangle +(2pt,2pt)  ;
\draw (C5p) node[below] {\tiny$C'_5$} +(-2pt,-2pt) rectangle +(2pt,2pt)  ;
\draw[fill=white] (R1) circle (2pt) node[below] {\tiny$R_1$} ;
\draw[fill=white] (R2) circle (2pt) node[below] {\tiny$R_2$} ;
\draw[fill=white] (R3) circle (2pt) node[above] {\tiny$R_4$} ;
\draw[fill=white] (R4) circle (2pt) node[below] {\tiny$R_3$} ;
\end{tikzpicture}

& $3$ & $(0,1,1)$ & $\begin{array}{l} R_1=r_{12}\\ R_2=r_{34} \\ R_3= r_{45} \\ R_4=r_{123} \\ \end{array}$ \\

\hline

$\A_4$ $[5]$ &
\begin{tikzpicture}[baseline=0]
\coordinate (C1) at (0,0.5);
\coordinate (C2) at (1,0.5);
\coordinate (C3) at (2,0.5);
\coordinate (C4) at (3,0.5);
\coordinate (C5) at (4,0.5);
\coordinate (C1p) at (0,-0.5);
\coordinate (C2p) at (1,-0.5);
\coordinate (C3p) at (2,-0.5);
\coordinate (C4p) at (3,-0.5);
\coordinate (C5p) at (4,-0.5);
\coordinate (R1) at ($(C1)!.5!(C2p)$);
\coordinate (R2) at ($(C2)!.5!(C3p)$);
\coordinate (R3) at ($(C3)!.5!(C4p)$);
\coordinate (R4) at ($(C4)!.5!(C5p)$);
\draw[dashed] (C1p) -- (C1);
\draw[dashed] (C2p) -- (C2);
\draw[dashed] (C3p) -- (C3);
\draw[dashed] (C4p) -- (C4);
\draw[dashed] (C5p) -- (C5);
\draw (C1) -- (R1) -- (C2p);
\draw (C2) -- (R2) -- (C3p);
\draw (C3) -- (R3) -- (C4p);
\draw (C4) -- (R4) -- (C5p);
\draw[fill] (C1) node[above] {\tiny$C_1$} +(-2pt,-2pt) rectangle +(2pt,2pt) ;
\draw (C2) node[above] {\tiny$C_2$} +(-2pt,-2pt) rectangle +(2pt,2pt) ;
\draw (C3) node[above] {\tiny$C_3$} +(-2pt,-2pt) rectangle +(2pt,2pt) ;
\draw (C4) node[above] {\tiny$C_4$} +(-2pt,-2pt) rectangle +(2pt,2pt) ;
\draw (C5) node[above] {\tiny$C_5$} +(-2pt,-2pt) rectangle +(2pt,2pt) ;
\draw (C1p) node[below] {\tiny$C'_1$} +(-2pt,-2pt) rectangle +(2pt,2pt)  ;
\draw (C2p) node[below] {\tiny$C'_2$} +(-2pt,-2pt) rectangle +(2pt,2pt)  ;
\draw (C3p) node[below] {\tiny$C'_3$} +(-2pt,-2pt) rectangle +(2pt,2pt)  ;
\draw (C4p) node[below] {\tiny$C'_4$} +(-2pt,-2pt) rectangle +(2pt,2pt)  ;
\draw[fill] (C5p) node[below] {\tiny$C'_5$} +(-2pt,-2pt) rectangle +(2pt,2pt)  ;
\draw[fill=white] (R1) circle (2pt) node[below] {\tiny$R_1$} ;
\draw[fill=white] (R2) circle (2pt) node[below] {\tiny$R_2$} ;
\draw[fill=white] (R3) circle (2pt) node[below] {\tiny$R_3$} ;
\draw[fill=white] (R4) circle (2pt) node[below] {\tiny$R_4$} ;
\end{tikzpicture}

& $2$ & $(0,1,0)$ & $\begin{array}{l} R_1=r_{12}\\ R_2=r_{23} \\ R_3=r_{34} \\ R_4=r_{45} \\ \end{array}$ \\

\hline

$\D_4$ $[(31)1]$ &

\begin{tikzpicture}[baseline=0]
\coordinate (C1) at (0,0.5);
\coordinate (C2) at (1,0.5);
\coordinate (C3) at (2,0.5);
\coordinate (C4) at (3,0.5);
\coordinate (C5) at (4,0.5);
\coordinate (C1p) at (0,-0.5);
\coordinate (C2p) at (1,-0.5);
\coordinate (C3p) at (2,-0.5);
\coordinate (C4p) at (3,-0.5);
\coordinate (C5p) at (4,-0.5);
\coordinate (R1) at ($(C4)!.5!(C5)$);
\coordinate (R2) at ($(C2)!.5!(C3p)$);
\coordinate (R3) at ($(C3)!.5!(C4p)$);
\coordinate (R4) at ($(C4)!.5!(C5p)$);
\draw[dashed] (C1p) -- (C1);
\draw[dashed] (C2p) -- (C2);
\draw[dashed] (C3p) -- (C3);
\draw[dashed] (C4p) -- (C4);
\draw[dashed] (C5p) -- (C5);
\draw (C2) -- (R2) -- (C3p);
\draw (C3) -- (R3) -- (C4p);
\draw (C4) -- (R4) -- (C5p);
\draw (C4) -- (C5);
\draw[fill] (C1) node[above] {\tiny$C_1$} +(-2pt,-2pt) rectangle +(2pt,2pt) ;
\draw[fill] (C2) node[above] {\tiny$C_2$} +(-2pt,-2pt) rectangle +(2pt,2pt) ;
\draw (C3) node[above] {\tiny$C_3$} +(-2pt,-2pt) rectangle +(2pt,2pt) ;
\draw (C4) node[above] {\tiny$C_4$} +(-2pt,-2pt) rectangle +(2pt,2pt) ;
\draw (C5) node[above] {\tiny$C_5$} +(-2pt,-2pt) rectangle +(2pt,2pt) ;
\draw[fill] (C1p) node[below] {\tiny$C'_1$} +(-2pt,-2pt) rectangle +(2pt,2pt)  ;
\draw (C2p) node[below] {\tiny$C'_2$} +(-2pt,-2pt) rectangle +(2pt,2pt)  ;
\draw (C3p) node[below] {\tiny$C'_3$} +(-2pt,-2pt) rectangle +(2pt,2pt)  ;
\draw (C4p) node[below] {\tiny$C'_4$} +(-2pt,-2pt) rectangle +(2pt,2pt)  ;
\draw (C5p) node[below] {\tiny$C'_5$} +(-2pt,-2pt) rectangle +(2pt,2pt)  ;
\draw[fill=white] (R1) circle (2pt) node[above] {\tiny$R_4$} ;
\draw[fill=white] (R2) circle (2pt) node[below] {\tiny$R_1$} ;
\draw[fill=white] (R3) circle (2pt) node[below] {\tiny$R_2$} ;
\draw[fill=white] (R4) circle (2pt) node[below] {\tiny$R_3$} ;
\end{tikzpicture}

& $3$ & $(1,0,1)$ & $\begin{array}{l} R_1=r_{23}\\ R_2=r_{34} \\ R_3= r_{45} \\ R_4=r_{123} \\ \end{array}$  \\

\hline

$2\A_1\A_3$ $[(21)(11)]$ &
\begin{tikzpicture}[baseline=0]
\coordinate (C1) at (0,0.5);
\coordinate (C2) at (1,0.5);
\coordinate (C3) at (2,0.5);
\coordinate (C4) at (3,0.5);
\coordinate (C5) at (4,0.5);
\coordinate (C1p) at (0,-0.5);
\coordinate (C2p) at (1,-0.5);
\coordinate (C3p) at (2,-0.5);
\coordinate (C4p) at (3,-0.5);
\coordinate (C5p) at (4,-0.5);
\coordinate (R1) at ($(C4)!.5!(C5)$);
\coordinate (R2) at ($(C1)!.5!(C2p)$);
\coordinate (R3) at ($(C3)!.5!(C4p)$);
\coordinate (R4) at ($(C4)!.5!(C5p)$);
\coordinate (R5) at ($(C1)!.5!(C2)$);
\draw[dashed] (C1p) -- (C1);
\draw[dashed] (C2p) -- (C2);
\draw[dashed] (C3p) -- (C3);
\draw[dashed] (C4p) -- (C4);
\draw[dashed] (C5p) -- (C5);
\draw (C1) -- (C2);
\draw (C1) -- (R2) -- (C2p);
\draw (C3) -- (R3) -- (C4p);
\draw (C4) -- (R4) -- (C5p);
\draw (C4) -- (C5);
\draw[fill] (C1) node[above] {\tiny$C_1$} +(-2pt,-2pt) rectangle +(2pt,2pt) ;
\draw (C2) node[above] {\tiny$C_2$} +(-2pt,-2pt) rectangle +(2pt,2pt) ;
\draw[fill] (C3) node[above] {\tiny$C_3$} +(-2pt,-2pt) rectangle +(2pt,2pt) ;
\draw (C4) node[above] {\tiny$C_4$} +(-2pt,-2pt) rectangle +(2pt,2pt) ;
\draw (C5) node[above] {\tiny$C_5$} +(-2pt,-2pt) rectangle +(2pt,2pt) ;
\draw (C1p) node[below] {\tiny$C'_1$} +(-2pt,-2pt) rectangle +(2pt,2pt)  ;
\draw (C2p) node[below] {\tiny$C'_2$} +(-2pt,-2pt) rectangle +(2pt,2pt)  ;
\draw (C3p) node[below] {\tiny$C'_3$} +(-2pt,-2pt) rectangle +(2pt,2pt)  ;
\draw (C4p) node[below] {\tiny$C'_4$} +(-2pt,-2pt) rectangle +(2pt,2pt)  ;
\draw (C5p) node[below] {\tiny$C'_5$} +(-2pt,-2pt) rectangle +(2pt,2pt)  ;
\draw[fill=white] (R1) circle (2pt) node[above] {\tiny$R_5$} ;
\draw[fill=white] (R2) circle (2pt) node[below] {\tiny$R_1$} ;
\draw[fill=white] (R3) circle (2pt) node[below] {\tiny$R_3$} ;
\draw[fill=white] (R4) circle (2pt) node[below] {\tiny$R_4$} ;
\draw[fill=white] (R5) circle (2pt) node[above] {\tiny$R_2$} ;
\end{tikzpicture}

& $2$ & $(0,0,2)$ & $\begin{array}{l} R_1=r_{12}\\ R_2=r_{345} \\ R_3=r_{34} \\ R_4=r_{45} \\ R_5=r_{123} \\ \end{array}$  \\

\hline

$\D_5$ $[(41)]$ &

\begin{tikzpicture}[baseline=0]
\coordinate (C1) at (0,0.5);
\coordinate (C2) at (1,0.5);
\coordinate (C3) at (2,0.5);
\coordinate (C4) at (3,0.5);
\coordinate (C5) at (4,0.5);
\coordinate (C1p) at (0,-0.5);
\coordinate (C2p) at (1,-0.5);
\coordinate (C3p) at (2,-0.5);
\coordinate (C4p) at (3,-0.5);
\coordinate (C5p) at (4,-0.5);
\coordinate (R1) at ($(C1)!.5!(C2p)$);
\coordinate (R2) at ($(C2)!.5!(C3p)$);
\coordinate (R3) at ($(C3)!.5!(C4p)$);
\coordinate (R4) at ($(C4)!.5!(C5p)$);
\coordinate (R5) at ($(C4)!.5!(C5)$);
\draw[dashed] (C1p) -- (C1);
\draw[dashed] (C2p) -- (C2);
\draw[dashed] (C3p) -- (C3);
\draw[dashed] (C4p) -- (C4);
\draw[dashed] (C5p) -- (C5);
\draw (C1) -- (R1) -- (C2p);
\draw (C2) -- (R2) -- (C3p);
\draw (C3) -- (R3) -- (C4p);
\draw (C4) -- (R4) -- (C5p);
\draw (C4) -- (C5);
\draw[fill] (C1) node[above] {\tiny$C_1$} +(-2pt,-2pt) rectangle +(2pt,2pt) ;
\draw (C2) node[above] {\tiny$C_2$} +(-2pt,-2pt) rectangle +(2pt,2pt) ;
\draw (C3) node[above] {\tiny$C_3$} +(-2pt,-2pt) rectangle +(2pt,2pt) ;
\draw (C4) node[above] {\tiny$C_4$} +(-2pt,-2pt) rectangle +(2pt,2pt) ;
\draw (C5) node[above] {\tiny$C_5$} +(-2pt,-2pt) rectangle +(2pt,2pt) ;
\draw (C1p) node[below] {\tiny$C'_1$} +(-2pt,-2pt) rectangle +(2pt,2pt)  ;
\draw (C2p) node[below] {\tiny$C'_2$} +(-2pt,-2pt) rectangle +(2pt,2pt)  ;
\draw (C3p) node[below] {\tiny$C'_3$} +(-2pt,-2pt) rectangle +(2pt,2pt)  ;
\draw (C4p) node[below] {\tiny$C'_4$} +(-2pt,-2pt) rectangle +(2pt,2pt)  ;
\draw (C5p) node[below] {\tiny$C'_5$} +(-2pt,-2pt) rectangle +(2pt,2pt)  ;
\draw[fill=white] (R1) circle (2pt) node[below] {\tiny$R_1$} ;
\draw[fill=white] (R2) circle (2pt) node[below] {\tiny$R_2$} ;
\draw[fill=white] (R3) circle (2pt) node[below] {\tiny$R_3$} ;
\draw[fill=white] (R4) circle (2pt) node[below] {\tiny$R_4$} ;
\draw[fill=white] (R5) circle (2pt) node[above] {\tiny$R_5$} ;
\end{tikzpicture}

& $1$ & $(0,0,1)$ & $\begin{array}{l} R_1=r_{12}\\ R_2=r_{23} \\ R_3=r_{34} \\ R_4=r_{45} \\ R_5=r_{123} \\ \end{array}$ \\

\end{longtable}
\end{center}

We summarize the results of this section in the following

\begin{proposition}
\label{classmorphip1}
Let $X$ denote a degree four weak del Pezzo surface, and $P$ a characteristic polynomial for the pencil of quadrics defining its anticanonical model in $\P^4$. Denote by $a_X$ the number of simple roots of $P$, $b_X$ (\emph{resp.} $c_X$) the number of multiple roots corresponding to a rank $4$ (\emph{resp.} rank $3$) quadric in the pencil.

If $N_X$ is the number of classes in $\CC$ not intersecting any $(-2)$-curve negatively, we have $N_X=2a_X+2b_X+c_X$, and this is the number of surjective morphisms from $X$ to the projective line. 

For each of these classes $C$, the morphism $\varphi_{|C|}:X\rightarrow \P^1$ has fibers linearly equivalent to $C$, and the type of map $\varphi_{|C|}$ defines on $X_s$ depends on the following numerical criterion 
\begin{itemize}
	\item[(a)] $2a_X$ of them satisfy $C\cdot R=0$ for all $(-2)$-curves $R\in \R_{irr}(X)$; for those ones $\varphi_{|C|}$ factors through the anticanonical morphism and yields a morphism from $X_s$ to $\P^1$; moreover these classes lie in $\Pic(X_s)$, and they form $a_X$ couples of complementary classes;
	\item[(b)] $2b_X+c_X$ satisfy $C\cdot R\geq 0$ for all $(-2)$-curves $R$ and $C\cdot R>0$ for at least one, and for those $\varphi_{|C|}$ does not define a morphism from $X_s$ to $\P^1$.
\end{itemize}
\end{proposition}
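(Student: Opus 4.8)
The plan is to combine the numerical constraints on the fiber class of a morphism $X\to\P^1$ with the explicit constructions of this section, and then to pin down the count geometric type by geometric type from the table above. First I would record the upper bound. A surjective morphism $\psi\colon X\to\P^1$ has connected fibers of class $C$ with $C^{\cdot2}=0$ and $C\cdot K_X=-2$, so $C\in\CC$ by \cite[Theorem 2]{BBFL} and $\psi=\varphi_{|C|}$; in particular a morphism to $\P^1$ is determined by its fiber class. If moreover $C\cdot R=-1$ for some $(-2)$-curve $R$, then $C-R\in\CC$, so $R$ is a fixed component of $|C|$ and $\varphi_{|C|}$ is only a rational map. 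Hence the fiber class of a morphism $X\to\P^1$ is one of the $N_X$ ``full squares'' $C\in\CC$ satisfying $C\cdot R\ge0$ for all $R\in\R_{\irr}(X)$, and there are at most $N_X$ morphisms $X\to\P^1$.

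Next I would assemble the morphisms built earlier: each simple root $\theta_i$ of $P$ gave two of them, factoring through the anticanonical morphism (the vertex $v_i$ of the rank $4$ quadric $Q_i$ lies off $X_s$) and with complementary fiber classes; each multiple root with $Q_i$ of rank $4$ gave two, obtained after blowing up the singular point $v_i\in X_s$; and each rank $3$ quadric of the pencil gave one. I would check these $2a_X+2b_X+c_X$ morphisms are pairwise distinct: a general fiber of $\varphi_{ij}$ spans (after contraction to $X_s$) a plane $\Pi$ contained in $Q_i$, and since a smooth quadric threefold in $\P^4$ contains no plane while two distinct members of a pencil span all of it, $\Pi$ lies in a single member of the pencil; thus morphisms attached to different $\theta_i$ have different fibers, hence different classes, while the two attached to one rank $4$ quadric come from the two rulings of its quadric-surface base. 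With the first paragraph this gives $2a_X+2b_X+c_X\le\#\{\text{morphisms }X\to\P^1\}\le N_X$. I would then obtain the equality $N_X=2a_X+2b_X+c_X$ by inspecting the table row by row, where in each the displayed $N$ equals $2a+2b+c$. (One can also argue directly that every morphism $X\to\P^1$ arises from a singular quadric of the pencil, a rank $4$ one via its two rulings and a rank $3$ one via its single ruling.) Equality then forces each full square to be the fiber class of exactly one of the constructed morphisms, which is the first assertion.

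For the dichotomy: $\varphi_{|C|}$ contracts among the components of $A$ exactly the $(-2)$-curves $R$ with $C\cdot R=0$, so, using $\varphi_\ast\O_X=\O_{X_s}$, it factors through the anticanonical morphism $\varphi\colon X\to X_s$ — equivalently descends to a morphism $X_s\to\P^1$ — if and only if $C\cdot R=0$ for every $R\in\R_{\irr}(X)$. Among the $N_X$ full squares, the $2a_X$ coming from simple roots are precisely those with this property: for them the fibers are already Cartier on $X_s$, whereas a morphism attached to a multiple root or a rank $3$ quadric does not contract the $(-2)$-curves created by blowing up the vertex (which lies on $X_s$), so $C\cdot R>0$ for some $R$. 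By the complementarity noted above these $2a_X$ classes form $a_X$ couples $\{C,-K_X-C\}$, and for such a $C$ the fibers of the induced map $X_s\to\P^1$ are Cartier divisors on $X_s$ pulling back to $C$, so $C\in\varphi^\ast\Pic(X_s\otimes\overline k)=\{D\in\Pic(X\otimes\overline k):D\cdot R=0\ \forall R\in\R_{\irr}\}$ by the exact sequence (\ref{pic}); this is (a). The remaining $2b_X+c_X$ full squares satisfy $C\cdot R>0$ for at least one $R$, so $\varphi_{|C|}$ does not contract $R$ and does not descend to a morphism $X_s\to\P^1$; this is (b).

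The main obstacle is the bookkeeping behind $N_X=2a_X+2b_X+c_X$: for each of the fifteen geometric types of degree four singular del Pezzo surfaces one must match the planes contained in the singular quadrics of the defining pencil with the combinatorially described full squares of $\CC$ and verify the count, which is precisely what the table encodes. A secondary point not to be skipped is that $|C|$ is base-point free for every full square $C$, so that $\varphi_{|C|}$ is a genuine morphism and not merely free of $(-2)$-curves in its fixed part; this is automatic once the count is known, but it can also be seen a priori since $C$ pairs non-negatively with every $(-1)$-class and every $(-2)$-curve and is therefore nef on the weak del Pezzo surface $X$. The remaining ingredients — the scheme-theoretic properties of $\varphi$ and the lattice description coming from (\ref{pic}) — are routine.
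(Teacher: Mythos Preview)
Your proposal is correct and follows essentially the same strategy as the paper: the upper bound on the number of morphisms comes from the numerical constraints on fiber classes, the lower bound from the explicit constructions attached to the singular quadrics of the pencil, and the equality $N_X=2a_X+2b_X+c_X$ is then checked geometric type by geometric type via the table. You add a few useful details the paper leaves implicit in its surrounding discussion---the distinctness of the constructed morphisms via the planes spanned by their fibers, the factorization criterion through $\varphi$, and the base-point freeness of $|C|$---but the overall architecture is the same.
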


\begin{remark}
\label{morphismsordinary}
Note that in the ordinary case, we have $a_X=5$ and $b_X=c_X=0$. Then the fibers of the $10$ morphisms form the conic bundles, and we recover the original graph.
\end{remark}

\begin{remark}
\label{incidencefiberroots}
Another consequence of the discussion above is that, for a fixed class $C\in \CC$ which intersects all the $(-2)$-curves nonnegatively, we have two possibilities for the relative position of a $(-2)$-curve with class $R$ with the fibers of the morphism $\varphi_{|C|}$

\begin{itemize}
	\item[(i)] when we have $C\cdot R=1$, $R$ is transverse to the fibration, and $\varphi_{|C|}$ restricts to an isomorphism from $R$ to $\P^1$. The images in $X_s$ of its fibers all pass through the singular point which is the image of $R$;
	\item[(ii)] when we have $C\cdot R=0$, $R$ is contained in a fiber of the morphism $\varphi_{|C|}$.
\end{itemize}
\end{remark}

\subsection{Galois action on the singular quadrics, and arithmetic types}

In this section we work over the finite field $k=\F_q$, $q$ a power of an odd prime.

It remains to provide information on the arithmetic type of a degree four del Pezzo surface $X$ from arithmetic information on the rank four singular quadrics in the pencil defining $X_s$. We begin by using the results of the preceding section to describe the Galois action on the morphisms to the projective line from this information. Later, we will transpose this action to an action on the graph of conics and $(-2)$-curves, which will naturally lead to a (``part'' of a) conjugacy class in the Weyl group $W(\D_5)$, that we describe below, and that is given in the fourth column of the table for degree four surfaces in the Appendix. 

We begin by a definition (note that it is independant of the choice of the supplementary)
\begin{definition}
Let $Q$ denote a rank four quadric in $\P^4$, defined over $\F_{q^d}$; its \emph{restricted discriminant} is the discriminant of its base in $\F_{q^d}^\times/\F_{q^d}^{\times2}$. In other word, it is the discriminant of the restriction of quadratic form corresponding to $Q$ to a supplementary of its kernel.
\end{definition}

The Weyl group $W(\D_5)$ can be identified with the group $H_5 \rtimes \S_5$, where $H_5$ is the subgroup of $(\Z/2\Z)^5$ which is the kernel of the map $\varepsilon=(\varepsilon_1,\ldots,\varepsilon_5)\mapsto \sum_{i=1}^5 \varepsilon_i$, and $\S_5$ acts on $H_5$ by $\sigma\cdot\varepsilon=(\varepsilon_{\sigma^{-1}(1)},\ldots,\varepsilon_{\sigma^{-1}(5)})$.

Recall from \cite[Proposition 25]{carter} that the conjugacy classes in $W(\D_5)$ are even signed cycle-types; they form a partition of $5$ where each integer in the partition is overlined or not, and there is an even number of overlines. The conjugacy class of an element $(\varepsilon,\sigma)$ has partition corresponding to the conjugacy class of $\sigma$ in $\S_5$, and a number in this partition is overlined when the sum of the $\varepsilon_i$ where $i$ describes the support of the corresponding cycle is $1$. Note that if $\sigma$ is a $d$-cycle, the conjugacy class of an element of the form $(\varepsilon,\sigma)$ is $d$ when its $d$-th power is trivial, and $\overline{d}$ else.

Note that $W(\D_5)$ is a subgroup of $W(\B_5)=(\Z/2\Z)^5 \rtimes \S_5$, whose conjugacy classes can be represented by (even or odd) signed cycle-types.

Note also that an arithmetic type is a conjugacy class in some subgroup of $W(\D_5)$. As a consequence, different arithmetic types can be contained in the same conjugacy class in $W(\D_5)$, and we will have to be more precise in the last section.

With these notations (and the ones used in the preceding sections) at hand, we begin by describing the Galois action on the part of the graph coming from the morphisms associated to the simple roots of $P$

\begin{lemma} \label{simpleroots}
Let $X$ be a weak degree $4$ del Pezzo surface over $\F_q$ whose anti-canonical model is the intersection of two quadrics $Q_0$ and $Q_\infty$ in~$\P^4$, both defined over $\F_q$.
Let $G\in\F_q[T]$ be a simple irreducible divisor of degree~$d$ of the
characteristic polynomial~$P(T)$ and let~$\theta\in\F_{q^d}$ be a root of $G$. 

We denote by $\Delta(\theta)\in \F_{q^d}^\times$ the restricted discriminant of~$ Q=Q_0 - \theta Q_\infty$, ie the discriminant of its base $q$.

To $G$, one can associate $d$ couples of complementary classes in $\CC$, and the action of the Frobenius 
on those classes induces a permutation of signed sub-type~$d$ or~$\overline{d}$ depending on whether $\Delta(\theta)$ is or is not a square in~$\F_{q^d}^\times$ (note that this does not depend on the choice of the root $\theta$).
\end{lemma}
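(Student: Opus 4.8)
The plan is to make explicit the geometry worked out in the preceding section: for a simple irreducible factor $G$ of degree $d$, each root $\theta$ of $G$ (over $\overline{\F}_q$) gives a rank $4$ singular quadric $Q=Q_0-\theta Q_\infty$, whose base $q_\theta$ is a smooth quadric surface in a hyperplane $H\subset\P^4$. The projection from the vertex $v_\theta$ exhibits $q_\theta\simeq\P^1\times\P^1$, and the two rulings pull back to the two complementary classes $C,C'\in\CC$ attached to $\theta$; running over the $d$ conjugate roots of $G$ we get the $d$ complementary couples associated to $G$. So first I would spell out how $\Gal(\overline{\F}_q/\F_q)$ acts on this set: Frobenius sends $\theta$ to $\theta^q$, hence $Q_\theta$ to $Q_{\theta^q}$, hence the couple at $\theta$ to the couple at $\theta^q$; since $G$ is irreducible of degree $d$ this is a single $d$-cycle on the couples as unordered pairs. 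The only remaining question is whether, after going around the $d$-cycle once (i.e. applying $\sigma^d$, which fixes $\theta$ and hence $Q_\theta$), the two rulings of $q_\theta$ are fixed or swapped. That is exactly the content of the signed sub-type being $d$ versus $\overline d$, by Carter's description recalled just before the lemma (a $d$-cycle is unbarred precisely when its $d$-th power is trivial on the $\pm$ coordinates).

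Next I would identify the swap/fix alternative with the restricted discriminant. Over $\F_{q^d}$ the quadric $Q_\theta$ is defined, so $q_\theta$ is a smooth quadric surface over $\F_{q^d}$; its two rulings are defined over $\F_{q^d}$ (equivalently, fixed by $\sigma^d$) if and only if $q_\theta$ is split over $\F_{q^d}$, i.e. isomorphic to $\P^1\times\P^1$ rather than to the nonsplit form $\Res_{\F_{q^{2d}}/\F_{q^d}}\P^1$. The standard fact here is that a smooth quadric surface $\{x_1x_2=x_3x_4\}$-type form over a field $K$ is split iff its discriminant is a square in $K^\times$: for a nondegenerate quaternary quadratic form the rulings of the associated quadric are rational over $K$ exactly when the discriminant (equivalently, the "half-discriminant" / the determinant up to squares) lies in $K^{\times 2}$. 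Applying this with $K=\F_{q^d}$ and the form being the restriction of $Q_\theta$ to a complement of its radical — whose discriminant is by definition $\Delta(\theta)$ — gives: the two rulings are $\sigma^d$-fixed iff $\Delta(\theta)\in\F_{q^d}^{\times 2}$, and swapped otherwise. Combined with the previous paragraph, this is precisely signed sub-type $d$ when $\Delta(\theta)$ is a square and $\overline d$ when it is not.

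Finally I would check the parenthetical independence claims. Independence of the choice of complement of the radical is already recorded in the definition of the restricted discriminant (two complements give isometric forms). Independence of the choice of root $\theta$ of $G$: if $\theta'=\theta^{q^i}$ is a conjugate root, then $Q_{\theta'}=\sigma^i(Q_\theta)$, so $q_{\theta'}$ is the Frobenius-conjugate of $q_\theta$ and $\Delta(\theta')=\Delta(\theta)^{q^i}$; since raising to the $q^i$ power preserves the subgroup of squares in $\F_{q^d}^\times$ (cyclic of even order, and $x\mapsto x^{q^i}$ is an automorphism), $\Delta(\theta)$ and $\Delta(\theta')$ are simultaneously squares or nonsquares. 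Hence the signed sub-type attached to $G$ is well defined.

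I expect the main obstacle to be the precise algebraic bookkeeping in the second paragraph — pinning down exactly which discriminant governs the splitting of the quadric surface over $\F_{q^d}$, matching conventions with Waterhouse's normal forms (the forms $Q_0(x)=\res(T(T-\theta)^{-m}x^2)$, $Q_\infty(x)=\res((T-\theta)^{-m}x^2)$ on the corresponding submodule) so that "restricted discriminant of $Q_0-\theta Q_\infty$" is literally the discriminant of the base conic/quadric, and making sure the sign conventions in "even signed cycle-type" line up so that "square $\leftrightarrow$ unbarred". The geometric skeleton (Galois permutes the couples as a $d$-cycle, the ruling-swap is the barring) is immediate from Section~\ref{sec4}; the work is entirely in the discriminant computation and in matching it to Carter's combinatorial labels.
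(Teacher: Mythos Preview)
Your plan is correct and follows essentially the same approach as the paper's proof: identify the $d$ couples of complementary classes with the $d$ conjugate rank-$4$ quadrics, observe that Frobenius permutes them cyclically, and then decide the barred/unbarred alternative by whether $\sigma^d$ fixes or swaps the two rulings of the base quadric $q_\theta$ over $\F_{q^d}$, which is the classical hyperbolic/elliptic dichotomy governed by the discriminant. Your worry about ``precise algebraic bookkeeping'' is largely unfounded: the paper dispatches that step in one sentence by invoking the well-known fact that a smooth quadric surface over $\F_{q^d}$ is hyperbolic (rulings rational) or elliptic (rulings swapped) according to whether its discriminant is a square, and your added verification that $\Delta(\theta)$ and $\Delta(\theta^{q^i})$ are simultaneously squares is a welcome detail the paper leaves implicit.
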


\begin{proof}
Let $\theta_1=\theta,\theta_2=\theta^q,\ldots, \theta_d=\theta^{q^{d-1}}\in \F_{q^d}$ denote the roots of $G$. From the results in the preceding section, we obtain $d$ rank $4$ quadrics $Q_i:=Q_0 - \theta_i Q_\infty$, and $2d$ morphisms $\varphi_{ij}$ from $X_s$ to $\P^1$, $1\leq i\leq d$, $1\leq j\leq 2$. It follows from Proposition \ref{classmorphip1} that the classes $f_{ij}$ of the fibers of these morphisms form $d$ couples of complementary classes in $\CC$.

The action of Frobenius sends $Q_i$ on $Q_{i+1}$ (the indices are read modulo $d$), and the couple $\{f_{i1},f_{i2}\}$ to the couple $\{f_{i+1,1},f_{i+1,2}\}$. Thus the permutation associated to the Frobenius (seen as an element in $W(\D_5)$) is the cycle $(1\ldots d)$ of length $d$.

The base $q_1$ of the quadric $Q_1$ is defined over $\F_{q^d}$; it is well known that it is either hyperbolic (isomorphic to $\P^1\times \P^1$ over $\F_{q^d}$) or elliptic (it becomes isomorphic to $\P^1\times \P^1$ only after a quadratic extension of the base field) depending on whether its discriminant is or is not a square in~$\F_{q^d}$.

In the hyperbolic case (then any of the $q_i$ is hyperbolic), the $d$-th power of the Frobenius stabilizes each one of the two $\P^1$, and the fibers $f_{11}$ and $f_{12}$; the Galois action has order $d$, and type $d$. Else the $d$-th power of the Frobenius exchanges the two $\P^1$ and the fibers $f_{11}$, $f_{12}$; the Galois action has order $2d$, and type $\overline{d}$.
\end{proof}

We now describe the Galois action on the morphisms associated to the rank $4$ quadrics in the pencil coming from multiple roots of its characteristic polynomial. 

\begin{lemma}
\label{multipleroots4}
Let $\theta\in \overline{\F}_q$ be a multiple root of $P$, say of multiplicity $m$, such that the associated quadric $Q:=Q_0-\theta Q_\infty$ has rank $4$. Denote by $G$ the minimal polynomial of $\theta$ over $\F_q$, by $d$ its degree, and by $\Delta(\theta)$ the restricted discriminant of $Q$.
  
The $d$ vertices of $Q$ and its conjugates $Q^\sigma,\ldots,Q^{\sigma^{d-1}}$ are singular points on $X_s$, and these singularities have Dynkin type $d\A_{m-1}$. To these correspond $2d$ morphisms from $X$ to $\P^1$, whose fibers $f_1,f_2,\ldots,f_1^{\sigma^{d-1}},f_2^{\sigma^{d-1}}$ form $2d$ classes in $\CC$, and the following cases occur
\begin{itemize}
	\item[(a)] $d=1$, $2\leq m \leq 4$: the Galois action fixes $f_1$ and $f_2$ if $\Delta(\theta)$ is a square in $\F_q^\times$, and exchanges them otherwise;
	\item[(b)] $d=2$, $m=2$ and the Galois action acts on the fibers as the bitransposition $(f_1f_1^{\sigma})(f_2f_2^{\sigma})$ if $\Delta(\theta)$ is a square in $\F_{q^2}^\times$, and as the four cycle $(f_1f_1^{\sigma}f_2f_2^{\sigma})$ else.
\end{itemize}
\end{lemma}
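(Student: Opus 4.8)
The plan is to mimic the argument of Lemma~\ref{simpleroots}, handling carefully the two new phenomena: the root $\theta$ is now \emph{multiple}, so its quadric $Q$ contains singular points of $X_s$ (the vertex $v$ of the cone), and in case (b) the Galois orbit of $\theta$ has size $2$, so we must track the interplay between the Frobenius cycling the two conjugate quadrics and the Frobenius possibly exchanging the two rulings of each base conic. First I would fix $\theta$ and recall from the construction preceding Proposition~\ref{classmorphip1} that, since $Q$ has rank $4$, it is a cone over a smooth quadric surface $q$ with vertex a single point $v$; because $\theta$ is a multiple root of $P$ we have $v\in X_s$, and $v$ is a singular point. The fibre over $v$ of the minimal desingularization $X\to X_s$ is a chain of $(-2)$-curves forming the component of the Dynkin graph corresponding to $\theta$, which is of type $\A_{m-1}$ (this matches the Segre-symbol dictionary: a part ``$m$'' with $e=n=1$). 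The two rulings of $q\cong\P^1\times\P^1$ over $\overline{\F}_q$ give, after composing with $X\to\Bl_v(X_s)\to\P^1$, two morphisms $\varphi_1,\varphi_2:X\to\P^1$ whose fibre classes $f_1,f_2\in\CC$ are complementary, exactly as in the simple-root case; running the conjugates of $\theta$ produces the $2d$ classes in the statement.

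Next I would identify the Galois action. As in Lemma~\ref{simpleroots}, the key point is that the Frobenius permutes the $d$ conjugate quadrics $Q,Q^\sigma,\dots,Q^{\sigma^{d-1}}$ cyclically, hence permutes cyclically the $d$ pairs $\{f_1^{\sigma^i},f_2^{\sigma^i}\}$, and the only remaining freedom is whether the appropriate power of Frobenius — $\sigma^d$ in case (b), $\sigma$ itself in case (a) — fixes or swaps the two rulings of the base conic $q$ (equivalently of $Q$) defined over $\F_{q^d}$. This is governed by the classical fact, recalled in the proof of Lemma~\ref{simpleroots}, that a smooth quadric surface over $\F_{q^d}$ is hyperbolic (both rulings $\F_{q^d}$-rational) precisely when its discriminant is a square in $\F_{q^d}^\times$, and elliptic (the two rulings conjugate over the quadratic extension, so swapped by the generator of $\Gal$) otherwise. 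Since the base of $Q$ is by definition the quadric whose discriminant is the restricted discriminant $\Delta(\theta)$, this gives exactly the square/non-square dichotomy. In case (a), $d=1$: $\sigma$ fixes $Q$, so it fixes or swaps $\{f_1,f_2\}$ according as $\Delta(\theta)$ is a square in $\F_q^\times$ or not. In case (b), $d=2$: $\sigma$ already swaps the pair $\{f_1,f_2\}$ with $\{f_1^\sigma,f_2^\sigma\}$; then if $\Delta(\theta)$ is a square in $\F_{q^2}^\times$ the element $\sigma^2$ fixes both rulings of $Q$, so $\sigma$ acts as the product of transpositions $(f_1f_1^\sigma)(f_2f_2^\sigma)$ (after relabelling the two conjugate rulings compatibly), while if $\Delta(\theta)$ is a non-square $\sigma^2$ swaps the two rulings, forcing $\sigma$ to be the $4$-cycle $(f_1f_1^\sigma f_2f_2^\sigma)$.

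Finally I would record the Dynkin-type claim. The vertices $v,v^\sigma,\dots,v^{\sigma^{d-1}}$ are $d$ distinct points (they are distinct over $\overline{\F}_q$ because the $\theta^{\sigma^i}$ are distinct, $G$ being separable), each a singular point of $X_s$ whose local type is $\A_{m-1}$ as above; since they are permuted transitively by Frobenius, together they contribute a component $d\A_{m-1}$ to the Dynkin type of $X$, which is what ``these singularities have Dynkin type $d\A_{m-1}$'' means. I expect the main obstacle to be the bookkeeping in case (b): one must choose the labelling of the rulings of $Q$ and of its conjugate $Q^\sigma$ in a Frobenius-compatible way so that the two stated normal forms $(f_1f_1^\sigma)(f_2f_2^\sigma)$ and $(f_1f_1^\sigma f_2f_2^\sigma)$ are literally the permutations obtained, rather than merely conjugate to them; this is the same subtlety as in Lemma~\ref{simpleroots} but with an extra layer because the ``swap'' can come either from $\sigma$ exchanging the two conjugate quadrics or from $\sigma^2$ exchanging the two rulings of one quadric, and one needs to see that only the parity of the total number of swaps matters. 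Everything else is a direct transcription of the rank-$4$-cone geometry already set up before Proposition~\ref{classmorphip1} together with the hyperbolic/elliptic discriminant criterion used in Lemma~\ref{simpleroots}.
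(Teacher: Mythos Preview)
Your proposal is correct and follows essentially the same approach as the paper: the paper's own proof simply says the arithmetic assertions are ``easily deduced from the proof of the preceding lemma'' (Lemma~\ref{simpleroots}), which is exactly what you spell out in detail, and it invokes the Segre-symbol dictionary and Proposition~\ref{classmorphip1} for the Dynkin-type and fibre-class claims, just as you do.

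One small point you omit that the paper does include: before treating cases (a) and (b), the paper justifies why these are the \emph{only} relevant cases, observing that $G^m\mid P$ with $\deg P=5$ forces $md\le 5$, and that the remaining possibility $d=1$, $m=5$ corresponds to geometric type $\A_4$, where the stabilizer is trivial and the Galois action plays no role. You may want to add this one-line enumeration for completeness.
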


\begin{proof}
Since $P$ has degree $5$ and $G^m$ is a divisor, we must have $md\leq 5$, which leaves us with the listed possibilities, and $d=1$, $m=5$. But we see from the Segre symbols that this case corresponds to a singularity of Dynkin type $\A_4$. For this geometric type, there is only one arithmetic type, and the Galois action is not relevant: it will be sufficient to construct any del Pezzo surface over $\F_q$ with this singularity.

The assertion on the Dynkin type of the singularities comes from the Segre symbols: the number $m$ appearing as a part of it corresponds to an $\A_{m-1}$ singularity. The assertion on the fibers comes from Proposition \ref{classmorphip1}.

Finally the arithmetic assertions on the Galois action are easily deduced from the proof of the preceding lemma.
\end{proof}

\subsection{Quadratic module associated to a pencil of quadrics}

We continue to work over the finite field $k=\F_q$.

The anticanonical model of a degree four del Pezzo surface is the base locus of a pencil of quadrics in $\P^4$, thus it is defined by the vanishing of a pair of quadratic forms. We follow \cite{water} in this section, and define a quadratic module (here a $k[T]$-module of finite length endowed with a non degenerate bilinear form) from the surface. We give a normal form for such a module and we deduce arithmetic information on the singular quadrics of the pencil from this normal form.

As usual we denote by $\{Q_0,Q_\infty\}$ a basis for the pencil of quadratic forms defining $X$. We assume $Q_\infty$ is non-degenerate. Note that this requires the existence of a non-degenerate quadric of the pencil which is defined over $k$. We will have to drop this assumption farther in some very particular cases when $q=3$. 

If $\varphi_0$, $\varphi_\infty$ denote the bilinear forms on $k^5$ associated respectively to $Q_0$ and $Q_\infty$, then $\varphi_\infty$ is non degenerate, and there is an unique endomorphism $u$ of $k^5$ such that $\varphi_\infty(u(x),y)=\varphi_0(x,y)$ for any vectors $x,y$.  Moreover $u$ is symmetric with respect to $\varphi_\infty$.

The vector space $V=k^5$, endowed with the action of $u$ becomes a $k[T]$-module of finite length that we denote by $V_u$, and $\varphi_\infty$ is a nondegenerate symmetric $k[T]$-bilinear form on $V_u$.

\begin{definition}
We call the pair $(V_u,\varphi_\infty)$ a \emph{quadratic $k[T]$-module} in the following.
\end{definition}

Since $k[T]$ is a principal ideal domain, the module $V_u$ decomposes as a direct sum of primary cyclic components:
\begin{align}\label{eq_cyclic_primary_decomp}
V_u
&=
\bigoplus_{i = 1}^t
\left(\bigoplus_{j=1}^{n_i} k[T].x_{ij}\right),
&
&\ann\left(x_{ij}\right) = P_i^{m_{ij}}k[T],
\end{align}
where the $x_{ij}$ are elements of $V_u$, the polynomials $P_1,\ldots,P_t\in k[T]$ are irreducible and pairwise distinct, and the exponents $m_{i1},\ldots m_{it_i}$ are (not necessarily distinct) positive integers. Note that when $k$ is algebraically closed we recover the Segre symbol (\ref{segresymbol}).

Waterhouse has proved that the cyclic primary components of the decomposition \eqref{eq_cyclic_primary_decomp} can be chosen in such a way that they are two-by-two $\varphi_\infty$-orthogonal.

Moreover, it turns out that each cyclic component can be explicitly described. To this end, we need to introduce some notations. Let $F\in k[T]$ be a non constant,
unitary polynomial and let us consider the quotient algebra $k[T]/(F)$. We put $t = T\bmod F$, we choose some $\delta\in k[T]/(F)$ and we define $\lambda_F$,~$\Phi_F$, $\delta\cdot\lambda_F$ and~$\delta\cdot\Phi_F$ as
\begin{itemize}
\item the linear form $\lambda_F : k[T]/(F) \to k$ defined as the last vector of the dual basis of $(1,t,\ldots,t^{\deg(F)-1})$;
\item the symmetric bilinear form~$\Phi_F : k[T]/(F) \times k[T]/(F) \to k$ defined by $\Phi_F(x,y) = \lambda_F(xy)$;
\item the linear form defined by~$\delta\cdot\lambda_F(x) = \lambda_F(\delta x)$ and 
\item the bilinear form defined by $\delta\cdot\Phi_F(x,y) = \lambda_F(\delta x y)$.
\end{itemize}
Then the linear form $\lambda_F$ is a {\em dualizing form} and the algebra $k[T]/(F)$ with its dualizing form is called a {\em Frobenius
algebra}; by definition, this means that the bilinear form~$\Phi_F$ is non degenerate or equivalently that for
every $\lambda\in\Hom_k\left(k[T]/(F),k\right)$, there exists $\alpha\in k[T]/(F)$
such that $\lambda(x) = \alpha\cdot\lambda_F(x) = \lambda_F(\alpha x)$.

\begin{definition}
The \emph{quadratic module $[\![F,\delta]\!]$} is the $k[T]$-module $k[T]/(F)$ endowed with the bilinear form $\varphi = \delta \cdot \Phi_F$.
\end{definition}

This is an example of quadratic module with cyclic underlying $k[T]$-module. Note that the pair of quadratic forms on the corresponding $k$-vector space is given by $Q_\infty(x)= \lambda_F(\delta x^2)$ and $Q_0(x)=\lambda_F(t\delta x^2)$.

In fact this is a generic example.

\begin{proposition}
Let $(V_u,\varphi)$ be $k[T]$-quadratic module. Suppose that the $k[T]$-module $V_u$ is cyclic,
let $x\in V_u$ be such that $V_u = k[T]\cdot x$, and let~$F\in k[T]$ be a generator of its annihilator. Then there
exists $\delta\in k[T]/(F)$ such that for all $p,q$ in $k[T]/(F)$ we have $\varphi(p\cdot x,q\cdot x) = \delta \cdot \Phi_F(p,q)$.
\end{proposition}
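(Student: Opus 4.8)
The plan is to transport $\varphi$ along the canonical isomorphism of $k[T]$-modules $\psi\colon k[T]/(F)\to V_u$, $p\mapsto p\cdot x$, which is an isomorphism precisely because $V_u=k[T]\cdot x$ and $F$ generates $\ann(x)$. This turns $\varphi$ into a symmetric $k$-bilinear form $\overline{\varphi}$ on $k[T]/(F)$ given by $\overline{\varphi}(p,q)=\varphi(p\cdot x,q\cdot x)$, and the statement becomes the assertion that $\overline{\varphi}=\delta\cdot\Phi_F$ for some $\delta\in k[T]/(F)$.

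First I would record that $\overline{\varphi}$ is compatible with the action of $t=T\bmod F$, in the sense that $\overline{\varphi}(tp,q)=\overline{\varphi}(p,tq)$. Indeed, multiplication by $t$ on $k[T]/(F)$ corresponds under $\psi$ to the action of $u=T$ on $V_u$, and $u$ is self-adjoint for $\varphi$ (this is exactly the $k[T]$-bilinearity of the quadratic module), so $\overline{\varphi}(tp,q)=\varphi\bigl(T(p\cdot x),q\cdot x\bigr)=\varphi\bigl(p\cdot x,T(q\cdot x)\bigr)=\overline{\varphi}(p,tq)$. Iterating and using $k$-linearity, $\overline{\varphi}(rp,q)=\overline{\varphi}(p,rq)$ for every $r\in k[T]/(F)$; taking $r=p$ and using the unit $1$ gives $\overline{\varphi}(p,q)=\overline{\varphi}(1,pq)$, so $\overline{\varphi}(p,q)$ depends only on the product $pq$.

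This allows me to define the $k$-linear form $\ell\colon k[T]/(F)\to k$ by $\ell(s)=\overline{\varphi}(1,s)$, so that $\overline{\varphi}(p,q)=\ell(pq)$ for all $p,q$. Now I invoke the fact recalled just above the statement, namely that $k[T]/(F)$ equipped with $\lambda_F$ is a Frobenius algebra: every element of $\Hom_k\bigl(k[T]/(F),k\bigr)$ is of the form $s\mapsto\lambda_F(\alpha s)$ for a suitable $\alpha$. Applying this to $\ell$ yields $\delta\in k[T]/(F)$ with $\ell(s)=\lambda_F(\delta s)=\delta\cdot\lambda_F(s)$, whence $\overline{\varphi}(p,q)=\ell(pq)=\lambda_F(\delta pq)=\delta\cdot\Phi_F(p,q)$, which is the claim. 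One may further note that, since $\varphi$ and $\Phi_F$ are non-degenerate, $\delta$ must be a non-zero-divisor, hence a unit, in the finite-dimensional algebra $k[T]/(F)$, so that $(V_u,\varphi)\cong[\![F,\delta]\!]$.

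The argument is short; the only point requiring any care is the bookkeeping identifying the $T$-action on $V_u$ with multiplication in $k[T]/(F)$ and deducing that $\overline{\varphi}$ factors through the product map. Everything else is a direct appeal to the Frobenius-algebra property stated above, so I do not anticipate a genuine obstacle.
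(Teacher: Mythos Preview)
Your proof is correct and follows essentially the same approach as the paper's: transport $\varphi$ to $k[T]/(F)$ via the generator $x$, use self-adjointness of $u$ to reduce to a linear form $s\mapsto\overline{\varphi}(1,s)$, and then invoke the Frobenius-algebra property of $\lambda_F$ to produce $\delta$. The only addition you make is the observation that $\delta$ is a unit when $\varphi$ is non-degenerate, which the paper does not state explicitly here but uses implicitly later.
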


\begin{proof}
By definition of $F$, the map $\iota$ defined by $p \mapsto p\cdot  x$ is an isomorphism of $k[T]$-modules from $k[T]/(F)$
to $V_u = k[T]\cdot x$. The map $\psi$ defined by $\psi(p,q) = \varphi(p\cdot x,q\cdot x)$ for every $p,q\in k[T]/(F)$ is bilinear symmetric. Since $u$
is $\varphi$-symmetric, the multiplication by $t$ endomorphism is $\psi$-symmetric, and we have $\psi(tp,q)=\psi(p,tq)$. In particular,
we get $\psi(p,q) = \psi(1,pq)$. But we know that there exists $\delta\in k[T]/(F)$ such that $\psi(1,q) = \delta\cdot\lambda_F(q)$ for
every $q\in k[T]/(F)$. We deduce that
$$
\varphi(p\cdot x,q\cdot x) = \psi(p,q) = \psi(1,pq) = \delta\cdot\lambda_F(pq) = \delta\cdot\Phi_F(p,q)
$$
and the result follows.
\end{proof}

We can use the quadratic modules we have just defined to give a normal form for all quadratic modules. From \cite[Theorem 1.1 and Section 2]{water}, we have

\begin{theorem}[Waterhouse]
Let $(V,\varphi)$ be a non-degenerate $k[T]$-quadratic module of finite length.

Let~$\left(P_i^{e_{ij}}\right)_{i,j}$, $1\leq i\leq r$, $1\leq j\leq n_{ij}$, be the elementary divisors of~$u$. Then there exists $\delta_{ij} \in (k[T]/(P_i^{e_{ij}}))^\times$ such that the pair $(V,\varphi)$ is isometric to
the orthogonal sum $\bigoplus_{i,j} [\![P_i^{e_{ij}},\delta_{ij}]\!]$.
\end{theorem}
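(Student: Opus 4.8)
The statement is Waterhouse's structure theorem for quadratic $k[T]$-modules of finite length, and the plan is to reduce it to the proposition just proved, which handles the cyclic case. The two ingredients I would combine are: (1) the fact that such a module admits an orthogonal decomposition into primary cyclic components, and (2) the explicit identification of each cyclic summand with a module of the form $[\![P_i^{e_{ij}},\delta_{ij}]\!]$. Ingredient (2) is essentially the preceding proposition; ingredient (1) is the only genuinely new input, and it is where the work lies.

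\textbf{Step 1: orthogonal primary decomposition.} Start from the decomposition \eqref{eq_cyclic_primary_decomp} of $V_u$ into primary cyclic pieces as a $k[T]$-module. First I would split $V_u$ into its $P_i$-primary components $V_u = \bigoplus_i V_u[P_i^\infty]$; these are automatically $\varphi$-orthogonal, since for $x\in V_u[P_i^\infty]$ and $y\in V_u[P_j^\infty]$ with $i\neq j$ one has, using that multiplication by $t$ is $\varphi$-symmetric, that $\varphi(x,y)$ is killed by coprime powers $P_i^{a}$ and $P_j^{b}$, hence is zero. This reduces us to a single primary module, say $P$-primary. Within such a module one still needs to peel off cyclic summands while preserving orthogonality; this is the analogue for symmetric forms over $k[T]$ of the classical diagonalisation of symmetric bilinear forms, and it is exactly the content of Waterhouse's argument (\cite[Section 2]{water}). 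The standard device is to pick a generator $x$ of a cyclic submodule of maximal order $P^e$ inside $V_u[P^\infty]$; because $\varphi$ is nondegenerate and $t$-symmetric, the restriction of $\varphi$ to $k[T]\cdot x$ is itself nondegenerate (one checks that the natural map $k[T]\cdot x \to \Hom_{k[T]}(k[T]\cdot x, k(T)/k[T])$ coming from $\varphi$ is injective, hence bijective by length count on this cyclic module), so $V_u = (k[T]\cdot x) \perp (k[T]\cdot x)^{\perp}$, and one recurses on the orthogonal complement, whose length has strictly dropped. This yields an orthogonal decomposition of $V_u$ into primary cyclic quadratic modules whose underlying $k[T]$-modules realise the elementary divisors $P_i^{e_{ij}}$ of $u$.

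\textbf{Step 2: identify each summand.} For each orthogonal summand $W$ in the decomposition from Step 1, $W$ is a cyclic $k[T]$-quadratic module, so the preceding Proposition applies: fixing a generator $x$ with $\ann(x) = (F)$, $F = P_i^{e_{ij}}$, there is $\delta_{ij} \in k[T]/(F)$ with $\varphi(p\cdot x, q\cdot x) = \delta_{ij}\cdot\Phi_F(p,q)$, i.e. $W \cong [\![P_i^{e_{ij}},\delta_{ij}]\!]$ as a quadratic module. It remains to note $\delta_{ij}$ must be a unit: if $\delta_{ij}$ were a non-unit in $k[T]/(P_i^{e_{ij}})$, then it would be divisible by $P_i$, and then any element of the socle $P_i^{e_{ij}-1}k[T]/(P_i^{e_{ij}})$ would pair to zero with everything, contradicting nondegeneracy of $\Phi_F$ on $W$. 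Assembling the summands gives $(V,\varphi) \cong \bigoplus_{i,j}[\![P_i^{e_{ij}},\delta_{ij}]\!]$, as claimed.

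\textbf{Main obstacle.} The delicate point is Step 1, specifically the claim that within a primary module one can always split off a \emph{maximal} cyclic summand orthogonally. Over a field this is routine Gram--Schmidt, but here the ground ring is $k[T]$ and there is a subtlety when the residue characteristic is $2$ (quadratic versus bilinear), which is precisely why the paper restricts to odd characteristic throughout this section; in odd characteristic the symmetric bilinear form determines the quadratic form and the argument goes through. One must also be a little careful that the maximality of the order of $x$ is used to guarantee the restricted form on $k[T]\cdot x$ is nondegenerate — if $\varphi|_{k[T]\cdot x}$ were degenerate its radical would be a nonzero submodule on which a suitable power of $P$ acts trivially, and a maximality/length argument rules this out. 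Since all of this is carried out in \cite{water}, I would cite it for the orthogonal primary cyclic decomposition and give only the short verification that the $\delta_{ij}$ are units, the rest following from the Proposition above.
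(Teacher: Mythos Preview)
Your proposal is correct and mirrors exactly how the paper handles this theorem: the paper does not give an independent proof but states (just before the theorem) that Waterhouse has shown the primary cyclic components can be chosen pairwise $\varphi$-orthogonal, proves the preceding Proposition identifying each cyclic summand with some $[\![F,\delta]\!]$, and then cites \cite[Theorem 1.1 and Section 2]{water} for the theorem itself. Your Step~1 is precisely the cited Waterhouse input, your Step~2 is the preceding Proposition, and your extra observation that each $\delta_{ij}$ must be a unit (else the socle lies in the radical) is the only detail not made explicit in the paper.
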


Thus we can associate to the anticanonical model of any degree $4$ del Pezzo surface (with at least one non-singular quadric defined over $\F_q$ in the pencil) a $k[T]$-quadratic module of the form $\bigoplus_{i} [\![F_i,\delta_{i}]\!]$. 

Conversely, to such a module we associate the del Pezzo surface defined by the vanishing of the following two quadratic forms defined for $x=(x_i)$ by
\begin{equation}
\label{eqquadmod}
Q_\infty(x)= \sum_i\lambda_{F_i}(\delta_i x_i^2),~Q_0(x)=\sum_i\lambda_{F_i}(T\delta_i x_i^2)
\end{equation}

\begin{remark}
\label{constantmultiple}
Note that this association is in no way an application: we have arbitrarily chosen the basis $\{Q_0,Q_\infty\}$, and one could replace some $\delta_i$ by $x^2\delta_i$ without changing the resulting del Pezzo surface. Also replacing all $\delta_i$ by $a\delta_i$ for some $a\in \F_q^\times$ gives the quadratic forms $aQ_\infty,aQ_0$ and does not change the del Pezzo surface. 

For this reason, we will frequently be able to put additional restrictions on the $\delta_i$ in the last section, in order to ease the computation of the restricted discriminants of the singular quadrics of the pencil.

Moreover we have chosen to use the elementary divisors of the $k[T]$-module in the above presentation, but we could have chosen the invariant factors instead, or any other possible decomposition in cyclic submodules.
\end{remark}

We have seen that one of the key tools to compute the arithmetic type of a Del Pezzo surface of degree~$4$ is to control
the discriminants of the bases of the rank $4$ singular quadrics in the pencil. From the Frobenius algebras point of view, we have

\begin{lemma}\label{lem_DiscriminantQuadriquesSingulieres}
Let~$F \in k[T]$ be a unitary polynomial and let~$\delta\in k[T]/(F)$.

We denote by $N_F:k[T]/(F)\rightarrow k$ the norm.

\begin{enumerate}
\item\label{item_Dis} The discriminant of the bilinear form~$\delta\cdot\Phi_F$ equals~
\[
(-1)^{\frac{\deg(F)(\deg(F)-1)}{2}}N_F(\delta)
\]
\item\label{item_RestrictedDis} Let~$\theta\in k$ be a root of~$F$. Then
the bilinear form~$(\theta-T)\delta\cdot\Phi_F$ is degenerate of rank $\deg F-1$ and its restricted discriminant
equals
\[
(-1)^{\frac{\deg(F)\left(\deg(F)-1\right)}{2}} N_F(\delta)\delta(\theta)
\]
\end{enumerate}
\end{lemma}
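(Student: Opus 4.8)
The plan is to compute everything in the explicit basis $(1, t, \dots, t^{n-1})$ of $k[T]/(F)$, where $n = \deg F$, using that $\lambda_F$ is the last coordinate of the dual basis. First I would set up the matrix of $\delta\cdot\Phi_F$: its $(i,j)$ entry is $\lambda_F(\delta\, t^{i+j-2})$, i.e. the coefficient of $t^{n-1}$ in the reduction of $\delta\, t^{i+j-2}$ modulo $F$. The classical fact I would invoke is that this Hankel-type matrix has determinant equal, up to a sign depending only on $n$, to the resultant-type quantity $N_F(\delta) = \prod_{i} \delta(\theta_i)$ over the roots $\theta_i$ of $F$ (in an algebraic closure). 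The cleanest route is to base-change to $\overline{k}$, where $F = \prod_i (T-\theta_i)$ (first assuming $F$ separable, then removing that hypothesis by a density/specialization argument, or by a direct computation with the companion matrix), and to recognize that $k[T]/(F)\otimes\overline{k} \cong \prod_i \overline{k}[T]/(T-\theta_i)^{m_i}$. On each factor $\overline{k}[T]/(T-\theta)^m$ the form $\delta\cdot\Phi_{(T-\theta)^m}$ in the basis $((T-\theta)^{m-1},\dots,1)$ is anti-triangular with $\delta(\theta)$ on the anti-diagonal, so its determinant is $(-1)^{m(m-1)/2}\delta(\theta)^m$; multiplying over the factors and checking the sign bookkeeping gives part~\eqref{item_Dis}. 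The sign $(-1)^{n(n-1)/2}$ is exactly what is needed to make the anti-diagonal reversal consistent across the whole module.

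For part~\eqref{item_RestrictedDis}, I would fix a root $\theta$ of $F$ and write $F = (T-\theta) G$ with $G \in k[T]$ (this needs $\theta \in k$, as hypothesized) so that $\theta$ has multiplicity at least one; the factor $(\theta - T)$ kills exactly the cyclic summand corresponding to $\theta$ (more precisely, after base change the $(T-\theta)^{m}$-primary block becomes anti-triangular with top anti-diagonal entry $(\theta-\theta)\delta(\theta) = 0$ and the remaining anti-diagonal entries nonzero), so the rank drops by exactly one to $\deg F - 1$. The restricted discriminant is then the product of the nonzero "eigenvalue-type" contributions: on the $(T-\theta)$-adic block of size $m$ one loses one factor $\theta-\theta$ but the other $m-1$ anti-diagonal entries contribute, and on every block $(T-\theta')^{m'}$ with $\theta'\neq\theta$ one gets $(\theta-\theta')\delta(\theta')$. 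Reorganizing, the restricted discriminant equals $(-1)^{?}\,\delta(\theta)\cdot\prod_{\theta'}(\theta-\theta') $ over the roots $\theta'\neq\theta$ with multiplicity, times $N_F(\delta)/\delta(\theta)$; the product $\prod_{\theta'\neq\theta}(\theta-\theta')$ is, up to sign, $F'(\theta)$ or a unit, and the point is that it is a \emph{square} in $\F_{q^d}^\times$ when one accounts for the multiplicity $m$ of $\theta$ — in fact one should check that the net effect after removing squares is precisely $(-1)^{n(n-1)/2} N_F(\delta)\delta(\theta)$ in $k^\times/k^{\times 2}$. Here I would use Remark~\ref{constantmultiple} style reasoning only as a sanity check; the actual computation is forced.

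The main obstacle I anticipate is the sign and the square-class bookkeeping in part~\eqref{item_RestrictedDis}: one must carefully track which anti-diagonal entry is deleted, confirm that the surviving block of size $m-1$ contributes $\delta(\theta)^{m-1}$ times a sign $(-1)^{(m-1)(m-2)/2}$, and verify that the "off-block" contributions $\prod_{\theta'\neq\theta}(\theta-\theta')^{m'}$, together with the leftover $\delta(\theta)^{m-1}$ inside the $\theta$-block, assemble (modulo squares) into exactly $N_F(\delta)\delta(\theta)$ with the claimed global sign $(-1)^{n(n-1)/2}$. A clean way to avoid some of this is to argue directly over $k$ without base change: write $F = (T-\theta)G$, note $k[T]/(F)$ sits in the exact sequence with $k[T]/(G)$ and $k[T]/(T-\theta) = k$, choose the basis $(G, tG, \dots)$ adapted to this, and observe that $(\theta - T)$ annihilates the image of $k[T]/(G)^\perp$; then the restricted form is the transport of $\delta\cdot\Phi_G$ evaluated appropriately, and a residue computation as in the displayed formulas for $Q_0, Q_\infty$ just before the theorem identifies the determinant. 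I would present whichever of the two computations is shorter once the signs are pinned down, but I expect the residue/companion-matrix approach over $k$ to be the least error-prone.
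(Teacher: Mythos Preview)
Your approach via base change to $\overline{k}$ and block-by-block computation is workable but substantially more laborious than the paper's. For part~(\ref{item_Dis}) the paper avoids base change entirely with a one-line identity: if $\Ccal=(1,t,\dots,t^{n-1})$ and $\Dcal$ is its $\lambda_F$-dual basis, then the change-of-basis matrix $P$ from $\Dcal$ to $\Ccal$ is anti-triangular with $1$'s on the anti-diagonal, and one checks directly that $\Mat(m_\delta,\Ccal) = {}^tP\cdot\Mat(\delta\cdot\Phi_F,\Ccal)$; taking determinants gives the result with no separability hypothesis and no sign-tracking across blocks. Your route would need to control the determinant of the CRT transition matrix (a Vandermonde-type factor) to reconcile $(-1)^{n(n-1)/2}$ with $\prod_i(-1)^{m_i(m_i-1)/2}$, which is where the real work would hide.

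For part~(\ref{item_RestrictedDis}) the paper factors $F=(T-\theta)^eG$ with $G(\theta)\neq 0$, uses a B\'ezout identity $U(T-\theta)^e+VG=1$ to split $\lambda_F(x) = \lambda_G(xU)+\lambda_{(T-\theta)^e}(xV)$, and thereby gets a genuine orthogonal decomposition over $k$; the $(T-\theta)^e$-block has an explicit anti-triangular Gram matrix with a single zero on the anti-diagonal, and the $G$-block is handled by part~(\ref{item_Dis}). Your proposed alternative $F=(T-\theta)G$ has a gap exactly when $\theta$ is a multiple root: then $(T-\theta)$ and $G$ are not coprime, the short exact sequence does not split $\Phi_F$-orthogonally, and the restricted form is not simply a transport of $\delta\cdot\Phi_G$. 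Your base-change approach would go through, but note that after CRT the form on the $(T-\theta')^{m'}$-block is not $\delta\cdot\Phi_{(T-\theta')^{m'}}$ with anti-diagonal entry $\delta(\theta')$: the idempotent contributes an extra factor $\prod_{\theta''\neq\theta'}(\theta'-\theta'')^{-m''}$, which must be carried through the whole computation. This is precisely the bookkeeping you flagged as the obstacle, and the paper's coprime factorization over $k$ sidesteps it.
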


\begin{proof}
$(\ref{item_Dis})$ Put~$n=\deg(F)$. Let~$\Ccal = (1,\ldots,t^{n-1})$ be the canonical basis of~$k[T]/(F)$, 
and let~$\Dcal=(f_1,\ldots,f_n)$ be its dual basis with respect to $\lambda_F$, i.e. such that $\lambda_F(t^{i-1}f_j) = \delta_{ij}$, $1\leq i,j\leq n$. The coordinates of the elements of~$\Dcal$ in the basis $\Ccal$ are given by the columns of the matrix:
$$
P
=
\begin{pmatrix}
a_{1}   & \cdots                 & a_{n-1}&1\\
\vdots &\reflectbox{$\ddots$}& 1 &\\
a_{n-1} &\reflectbox{$\ddots$} &&\\
1      &       &&
\end{pmatrix}
$$
where the~$a_i$'s are the coefficients of~$F = T^n + a_{n-1}T^{n-1} + \cdots + a_1T+a_0$. Moreover, for any~$x\in k[T]/(F)$,
one has~$x = \sum_{i=1}^n \lambda_F(x f_i) e_i$.
Let us compare the matrices of the bilinear form~$\delta\cdot\phi_F$ and of the linear map~$m_\delta$ (multiplication by~$\delta$) in the
basis~$\Ccal$; by definition they are equal to
\begin{align*}
&\Mat\left(\delta\cdot\Phi_F, \Ccal\right)
=
\left(\lambda_F(\delta e_i e_j)\right)_{1\leq i,j\leq n}
&
&\text{and}
&
\Mat\left(m_\delta, \Ccal\right)
=
\left(\lambda_F(\delta f_i e_j)\right)_{1\leq i,j\leq n}.
\end{align*}
Thus they are related by the formula~$\Mat\left(m_\delta, \Ccal\right) ={}^t P\Mat\left(\delta\cdot\Phi_F, \Ccal\right)$ and the result follows from the equality of the determinants.

$(\ref{item_RestrictedDis})$ Let $e$ denote the multiplicity of the root $\theta$ for $F$, and~set~$F = (T-\theta)^e G$; 
there exist~$U,V\in k[T]$ satisfying~$U(T-\theta)^e + V G = 1$. Then for any $x\in k[T]/(F)$, we have
\begin{eqnarray*}
\lambda_F(x) & = & \lambda_F(xU(T-\theta)^e)+\lambda_F(xVG) \\
             & = & \lambda_F((xU \bmod G)(T-\theta)^e)+\lambda_F((xV \bmod (T-\theta)^e)G) \\
						& = & \lambda_G(xU)+\lambda_{(T-\theta)^e}(xV) \\
\end{eqnarray*}
where the last equality holds since $G$ and $(T-\theta)^e$ are unitary. We deduce the following orthogonal decomposition
\begin{equation}
\label{orthdec}
(\theta - T)\delta\cdot \Phi_F
=
(\theta - T)V\delta\cdot \Phi_{(T-\theta)^e} \oplus (\theta - T)U\delta\cdot \Phi_G.
\end{equation}
In order to compute the restricted discriminant of the first component, we note that $(\theta - T)V\delta\cdot \lambda_{(T-\theta)^e}((T-\theta)^a)=-\lambda_{(T-\theta)^e}((T-\theta)^{a+1}\delta V)$ is equal to $0$ as long as $a\geq e-1$, and to $-\delta(\theta)V(\theta)$ when $a=e-1$. We get
$$
\Mat\left((\theta - T)V\delta\cdot \Phi_{(T-\theta)^e}, \left((T-\theta)^{e-1},\ldots,1\right)\right)
=
\begin{pmatrix}
& & & 0\\
& & \reflectbox{$\ddots$}& -V(\theta)\delta(\theta)\\
& \reflectbox{$\ddots$} &\reflectbox{$\ddots$} &\vdots\\
0 & -V(\theta)\delta(\theta)& \cdots& \star\\
\end{pmatrix}.
$$
This form has rank~$e-1$; its kernel is generated by $(T-\theta)^{e-1}$, and its restricted discriminant equals:
$$
(-1)^{\frac{(e-1)(e-2)}{2}} \times (-1)^{e-1} \times V(\theta)^{e-1}\delta(\theta)^{e-1}
=
(-1)^{\frac{e(e-1)}{2}} \frac{\delta(\theta)^{e-1}}{G(\theta)^{e-1}}
$$
because~$V(\theta)G(\theta) = 1$.

For the second component of the decomposition (\ref{orthdec}), putting~$d=\deg(G)$, and using $(\ref{item_Dis})$, its discriminant
equals
$$
(-1)^{d\left(d-1\right)}N_G\left((\theta - T)U\delta\right)
=
(-1)^{\frac{d\left(d-1\right)}{2}+ed}
\frac{N_G(\delta)}{ N_G(\theta -T)^{e-1}}
$$
since~$U(\theta -T) \times (-1)^e(\theta -T)^{e-1} = 1 \bmod{G}$. 

We have $N_G(\theta -T)=G(\theta)$, and the product of these two discriminants gives the result as $N_F(\delta)=N_G(\delta)N_{(T-\theta)^e}(\delta)=N_G(\delta)N_{T-\theta}(\delta)^e=N_G(\delta)\delta(\theta)^e$.
\end{proof}

\begin{remark}
At this point, we make the following observation: when the underlying $k[T]$-module is cyclic, all singular quadrics in the correspondong pencil (defined by equations (\ref{eqquadmod})) have corank one.

The converse is true: when the underlying $k[T]$-module is not cyclic, it admits at least two invariant factors, and for all roots of the one of smallest degree the above result shows that the corresponding singular quadric has corank equal to the number of invariant factors. 
\end{remark}

We end this section with a criterion that determines the field of definition of certain singular points of the del Pezzo surface. We will use it in the next section in order to determine the arithmetic types when the pencil contains a rank $3$ quadric.  

\begin{lemma}
\label{rationalornot}
Let $X$ be a degree four del Pezzo surface over $\F_q$ such that for some $\theta\in \F_q$, the non-degenerate quadratic $k[T]$-module associated to $X$ can be written as an orthogonal sum $[\![T-\theta,\delta_1]\!]\oplus[\![T-\theta,\delta_2]\!]\oplus M$, where the annihilator $P$ of the $k[T]$-module $M$ satisfies $P(\theta)\neq 0$.

Then the quadric of the pencil with equation $Q_0-\theta Q_\infty=0$ has rank $3$, and if $\ell\simeq \P^1$ is its vertex, the singular del Pezzo variety $X_s$ meets $\ell$ at two distinct points, which are defined over $\F_q$ if $-\delta_1\delta_2$ is a square in $\F_q^\times$, and conjugate over $\F_q$ else.
\end{lemma}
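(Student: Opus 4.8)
The plan is to work directly from the explicit pair of quadratic forms that (\ref{eqquadmod}) attaches to the module $[\![T-\theta,\delta_1]\!]\oplus[\![T-\theta,\delta_2]\!]\oplus M$. Write a vector of $V=k^5$ as $(x_1,x_2,y)$, where $x_1,x_2$ lie in the one-dimensional spaces $k[T]/(T-\theta)\simeq k$ underlying the summands $[\![T-\theta,\delta_i]\!]$, and $y$ ranges over the three-dimensional space underlying $M$ (three-dimensional since $\dim_k M=5-1-1$). For $F=T-\theta$ one has $t=\theta$, $\lambda_F=\mathrm{id}_k$ and $\Phi_F(a,b)=ab$, so (\ref{eqquadmod}) together with the $\varphi_\infty$-orthogonality of the decomposition gives
\begin{align*}
Q_\infty(x_1,x_2,y)&=\delta_1x_1^2+\delta_2x_2^2+Q_\infty^M(y),\\
Q_0(x_1,x_2,y)&=\theta\delta_1x_1^2+\theta\delta_2x_2^2+Q_0^M(y),
\end{align*}
where $(Q_0^M,Q_\infty^M)$ is the pair of forms carried by $M$.

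First I would analyse $Q_\theta:=Q_0-\theta Q_\infty$. From the display it vanishes identically on the $(x_1,x_2)$-coordinates and restricts on $M$ to $Q_0^M-\theta Q_\infty^M$. Using the defining relation $\varphi_0(v,w)=\varphi_\infty(u(v),w)$, the polar form of $Q_\theta$ on $M$ is $(v,w)\mapsto\varphi_\infty\big((u-\theta)v,w\big)$. Since $P$ is the annihilator of the $k[T]$-module $M$, i.e.\ the minimal polynomial of $u|_M$, the hypothesis $P(\theta)\neq0$ means $\theta$ is not an eigenvalue of $u|_M$, so $u-\theta$ is invertible on $M$; as $\varphi_\infty|_M$ is non-degenerate (each cyclic summand is a Frobenius algebra with unit $\delta_{ij}$), the form $Q_\theta|_M$ is non-degenerate of rank $3$. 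Hence $Q_\theta$ has rank exactly $3$, its radical is the two-dimensional space $[\![T-\theta,\delta_1]\!]\oplus[\![T-\theta,\delta_2]\!]=\ker(u-\theta)$, and the vertex of $\{Q_\theta=0\}$ is $\ell=\P(\ker(u-\theta))\simeq\P^1$. This is the first assertion.

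Then, for $X_s\cap\ell$: a point of $\ell$ has $y=0$, so on $\ell$ the two forms become $Q_\infty=\delta_1x_1^2+\delta_2x_2^2$ and $Q_0=\theta(\delta_1x_1^2+\delta_2x_2^2)$; therefore $X_s\cap\ell$ is cut out inside $\ell\simeq\P^1$ by the single binary quadratic form $\delta_1x_1^2+\delta_2x_2^2$. This form is nonzero and, in odd characteristic, has nonzero discriminant $-4\delta_1\delta_2$, so it has two distinct zeros, the points with $(x_1:x_2)$ satisfying $(x_1/x_2)^2=-\delta_2/\delta_1$. These two points are rational over $\F_q$ exactly when $-\delta_2/\delta_1$ --- equivalently $-\delta_1\delta_2$, since the two differ by the square $\delta_1^2$ --- is a square in $\F_q^\times$; otherwise $-\delta_2/\delta_1$ becomes a square only over the quadratic extension $\F_{q^2}$ (any element of $\F_q^\times$ is a square there), its two square roots are swapped by Frobenius, hence so are the two points, which are then conjugate over $\F_q$.

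I do not expect a serious obstacle; the one point needing care is the non-degeneracy of $Q_\theta$ on $M$, which must be read off the hypothesis $P(\theta)\neq0$ through the dictionary ``$u$ is $\varphi_\infty$-symmetric and $\varphi_0=\varphi_\infty\circ(u\times\mathrm{id})$'' rather than straight from the equations --- alternatively the rank-$3$/corank-$2$ statement can be quoted from the Remark following Lemma~\ref{lem_DiscriminantQuadriquesSingulieres}. One should also observe that the two points genuinely lie on $X_s$, which is immediate since both $Q_0$ and $Q_\infty$ vanish there, and that there is nothing further to check off $\ell$.
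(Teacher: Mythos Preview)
Your proof is correct and follows essentially the same approach as the paper. The paper writes the matrices of $Q_0$ and $Q_\infty$ in block diagonal form adapted to the decomposition and argues that $A_0-\theta A_\infty$ is invertible because $P(\theta)\neq 0$, then reads off the vertex and intersects with $Q_\infty$; you do the same computation, phrasing the rank-$3$ step via the invertibility of $u-\theta$ on $M$ rather than via matrices, which is equivalent.
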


\begin{proof}
We choose a basis $(e_1,\ldots,e_5)$ of $k^5$ which is adapted to the orthogonal decomposition. In this basis, from the expressions (\ref{eqquadmod}) of the forms $Q_0,Q_\infty$, their matrices can be written in block diagonal form as respectively
\[
\Mat(Q_\infty)=\begin{pmatrix}
\delta_1 & 0 &  0\\
0& \delta_2 &  0\\
0 &0 &A_\infty\\
\end{pmatrix}.
,~\Mat(Q_0)=\begin{pmatrix}
\theta\delta_1 &0  & 0\\
0 & \theta\delta_2& 0\\
0 &0&A_0\\
\end{pmatrix}.
\]
Since the annihilator $P$ of the $k[T]$-module $M$ satisfies $P(\theta)\neq 0$, the matrix $A_0-\theta A_\infty$ is invertible, and the quadric with equation $Q_0-\theta Q_\infty$ has rank $3$.

Its vertex is the line $(x_1:x_2:0:0:0)$ in $\P^4$, the intersection of which with the quadric $Q_\infty=0$ is defined by the equation $\delta_1x_1^2+\delta_2x_2^2=0$.
Now we have $\delta_1\delta_2\neq 0$ since the form $Q_\infty$ is assumed non degenerate, and we get two solutions since the characteristic is odd. The last assertion is classical.

\end{proof}

\subsection{Construction of degree four del Pezzo surfaces of any arithmetic type}

We are ready to show Theorem \ref{alltypes} \ref{alltypesdeg4}. Our strategy is the following : in order to construct a surface of a given arithmetic type, we construct a quadratic module as a sum of cyclic modules in normal form. We proceed geometric type by geometric type, and for each one we do the following
\begin{itemize}
	\item[1.] the Segre symbol gives the factorisation of the characteristic polynomial over $\overline{\F}_q$, and we deduce from it a decomposition of a quadratic $k[T]$-module associated to $X$. We choose it cyclic when this is possible, but in general we choose the decomposition that have seemed us best adapted to the computation of the restricted discriminants from lemma \ref{lem_DiscriminantQuadriquesSingulieres}, and of the information needed in lemma \ref{rationalornot}. We sometimes put additional restrictions on the $\delta_i$ in order to ease this computation.
	\item[2.] we describe all possible Galois actions on the graph of conics and $(-2)$-curves given in Table \ref{Tablegraphs4}; they give the possible arithmetic types. For each one, we use lemmas \ref{simpleroots}, \ref{multipleroots4} and \ref{rationalornot} to describe the factorization type of the characteristic polynomial over $\F_q$, the restricted discriminants of rank $4$ singular quadrics and the squareness of certain invariants; 
	\item[3.] we choose some $\delta_i$ in the cyclic quadratic submodules such that the invariants associated to the singular quadrics in the pencil satisfy the conditions listed at the second point above.  
\end{itemize}

The existence of a polynomial with this factorization, and of some $\delta_i$ with the claimed properties, is sufficient to ensure that the del Pezzo surface associated to the quadratic $k[T]$-module we have constructed has the arithmetic type we need.

\begin{remark}
As already mentioned in the introduction, we have used the mathematical software {\tt magma} to construct a couple of quadrics in $\P^4$ for each type of degree four singular del Pezzo surfaces over a given finite field. There is a slight difference between the procedure used in this program and the description of the quadratic modules presented below: here we decompose the modules using a biggest possible cyclic module. In the program we use another decomposition, with more terms. There is a Chinese remainder theorem for quadratic modules --not presented here-- that gives an equivalence between the two descriptions.
\end{remark}

\subsubsection{Ordinary geometric type}

The factorization of $P$ over $\overline{\F}_q$ is $P(T)=\prod_{i=1}^5(T-\theta_i)$ with distinct $\theta_i$, $1\leq i\leq 5$.

All singular quadrics in the pencil have rank four, and we can associate to the del Pezzo surface $X$ a cyclic quadratic module $[\![P,\delta]\!]$. We will construct $\delta$ such that $N_P(\delta)$ is a square in $\F_q^\times$; then from lemma \ref{lem_DiscriminantQuadriquesSingulieres}, the restricted discriminants of the four singular quadrics are the $\Delta(\theta_i)=N_P(\delta)\delta(\theta_i)\in \F_q(\theta_i)^\times$ and we have $\Delta(\theta_i)\equiv\delta(\theta_i)\bmod \F_q(\theta_i)^{\times2}$. 

Applying lemma \ref{simpleroots}, we see that the signed types representing the conjugacy classes in $W(\D_5)$ (which are the arithmetic types of ordinary degree four del Pezzo surfaces) give
\begin{itemize}
	\item the degrees of the irreducible factors of $P$ in $\F_q[T]$ just by removing the bars;
	\item the reduced discriminants $\Delta(\theta_i) \bmod \F_q(\theta_i)^{\times2}$, ie the classes $\delta(\theta_i) \bmod \F_q(\theta_i)^{\times2}$ from our convention on $N_P(\delta)$;
\end{itemize}

For each geometric type, we present the correspondance between signed types and properties of $P$ and $\delta$ in a table. In the last column, we write $\{\underbrace{\square,\cdots,\square}_{d ~{\rm times}}\}$ when the corresponding $d$ roots $\theta_i,\ldots,\theta_{i+d-1}$ are conjugate over $\F_q$ and $\delta(\theta_i)$ is a square in $\F_{q^d}^\times$, and $\{\underbrace{\boxtimes,\cdots,\boxtimes}_{d ~{\rm times}}\}$ when $\delta(\theta_i)$ is not a square in $\F_{q^d}^\times$.

\begin{longtable}{l|l}
\hline
\text{Signed type}&$(\delta(\theta_1),\ldots,\delta(\theta_5))$\\
\hline
\hline
\endhead
\hline
\endfoot
$11111$&$(\square,\square,\square,\square,\square)$\\
\hline
$111\overline{1}\overline{1}$&$(\square,\square,\square,\boxtimes,\boxtimes)$\\
\hline
$1\overline{1}\overline{1}\overline{1}\overline{1}$&$(\square,\boxtimes,\boxtimes,\boxtimes,\boxtimes)$\\
\hline
$2111$&$(\{\square,\square\},\square,\square,\square)$\\
\hline
$21\overline{1}\overline{1}$&$(\{\square,\square\},\square,\boxtimes,\boxtimes)$\\
\hline
$221$&$(\{\square,\square\},\{\square,\square\},\square)$\\
\hline
$311$&$(\{\square,\square,\square\},\square,\square)$\\
\hline
$2\overline{2}\overline{1}$&$(\{\square,\square\},\{\boxtimes,\boxtimes\},\boxtimes)$\\
\hline
$41$&$(\{\square,\square,\square,\square\},\square)$\\
\hline
$\overline{2}11\overline{1}$&$(\{\boxtimes,\boxtimes\},\square,\square,\boxtimes)$\\
\hline
$\overline{2}\overline{1}\overline{1}\overline{1}$&$(\{\boxtimes,\boxtimes\},\boxtimes,\boxtimes,\boxtimes)$\\
\hline
$\overline{2}\overline{2}1$&$(\{\boxtimes,\boxtimes\},\{\boxtimes,\boxtimes\},\square)$\\
\hline
$5$&$(\{\square,\square,\square,\square,\square\})$\\
\hline
$32$&$(\{\square,\square,\square\},\{\square,\square\})$\\
\hline
$3\overline{1}\overline{1}$&$(\{\square,\square,\square\},\boxtimes,\boxtimes)$\\
\hline
$\overline{3}1\overline{1}$&$(\{\boxtimes,\boxtimes,\boxtimes\},\square,\boxtimes)$\\
\hline
$\overline{4}\overline{1}$&$(\{\boxtimes,\boxtimes,\boxtimes,\boxtimes\},\boxtimes)$\\
\hline
$\overline{3}\overline{2}$&$(\{\boxtimes,\boxtimes,\boxtimes\},\{\boxtimes,\boxtimes\})$\\
\end{longtable}

Note that a polynomial $P$ and an element $\delta$ with the desired properties exist as long as we have $q\geq 5$. But when we have $q=3$, one cannot construct a polynomial with five roots over $\F_q$, and some types do not exist \cite{trepa}.

\subsubsection{Geometric type $\A_1$}

The factorization of $P$ over $\overline{\F}_q$ is $P(T)=\prod_{i=1}^3(T-\theta_i)(T-\theta_4)^2$ with distinct $\theta_i$, $1\leq i\leq 4$, and $\theta_4\in \F_q$.

All singular quadrics in the pencil have rank four, and we can associate to the del Pezzo surface $X$ a cyclic quadratic module $[\![P,\delta]\!]$. For each arithmetic type, we will choose some $\delta$ such that $N_P(\delta)$ is a square in $\F_q^\times$; then from lemma \ref{lem_DiscriminantQuadriquesSingulieres}, the restricted discriminants of the four singular quadrics are the $\Delta(\theta_i)=N_P(\delta)\delta(\theta_i)\in \F_q(\theta_i)^\times$ and we have $\Delta(\theta_i)\equiv\delta(\theta_i)\bmod \F_q(\theta_i)^{\times2}$.  

An element of $\Stab(\R_{\irr})$ must act on the last two couples of complementary conics as the identity (of signed type $11$) or the bitransposition $(C_4C_5')(C_5C_4')$ (of signed type $2$); note that both have even signed types. Since it is an element of $W(\D_5)$, it must act on the first three couples as an element of even signed type, ie as an element of $H_3 \rtimes \S_3$, where $H_3$ is the subgroup of $\Z/2\Z^3$ which is the kernel of the map $\varepsilon=(\varepsilon_1,\ldots,\varepsilon_3)\mapsto \sum_{i=1}^3 \varepsilon_i$. We get the following ten conjugacy classes in $\Stab(\R_{\irr})$
\begin{align*}
&111\cdot 11
&
&1\overline{1}\overline{1} \cdot 11
&
&21 \; 11
&
&\overline{2}\overline{1} \cdot 11
&
&3 \cdot 11
&
&111\cdot 2
&
&1\overline{1}\overline{1}\cdot 2
&
&21 \cdot 2
&
&\overline{2}\overline{1} \cdot 2
&
&3\cdot 2.
\end{align*}

Write $P(T)=F(T)(T-\theta_4)^2$; applying lemma \ref{simpleroots}, we see that the first part of the above signed types give
\begin{itemize}
	\item the degrees of the irreducible factors of $F$ in $\F_q[T]$ just by removing the bars;
	\item for $1\leq i \leq 3$, the reduced discriminants $\Delta(\theta_i) \bmod \F_q(\theta_i)^{\times2}$, ie the classes $\delta(\theta_i) \bmod \F_q(\theta_i)^{\times2}$ from our convention on $N_P(\delta)$;
\end{itemize}
Then, from lemma \ref{multipleroots4}, the classes $C_4$ and $C_5'$, which correspond to the fibers of the morphisms defined by the last singular quadric $Q_0-\theta_4 Q_\infty$, are fixed by Frobenius when $\delta(\theta_4)$ is a square in $\F_q^\times$, and exchanged else.  This gives us the second part of the signed type. We obtain the following table

\begin{longtable}{l|l|l|l}
\hline
$N^\circ$&\text{Cl. ~Stab.}&\text{Cl.~Weyl}&$(\delta(\theta_1),\ldots,\delta(\theta_4))$\\
\hline
\hline
\endhead
\hline
\endfoot
$1$&$111\cdot11$&$11111$&$(\square,\square,\square,\square)$\\
\hline
2&$111\cdot2$&$2111$&$(\square,\square,\square,\boxtimes)$\\
\hline
3&$21\cdot11$&$2111$&$(\{\square,\square\},\square,\square)$\\
\hline
4&$1\overline{1}\overline{1}\cdot11$&$111\overline{1}\overline{1}$&$(\boxtimes,\boxtimes,\square,\square)$\\
\hline
5&$21\cdot2$&$221$&$(\{\square,\square\},\square,\boxtimes)$\\
\hline
6&$1\overline{1}\overline{1}\cdot2$&$21\overline{1}\overline{1}$&$(\boxtimes,\boxtimes,\square,\boxtimes)$\\
\hline
7&$3\cdot11$&$311$&$(\{\square,\square,\square\},\square)$\\
\hline
8&$\overline{2}\overline{1}\cdot2$&$2\overline{2}\overline{1}$&$(\{\boxtimes,\boxtimes\},\boxtimes,\boxtimes)$\\
\hline
9&$\overline{2}\overline{1}\cdot11$&$\overline{2}11\overline{1}$&$(\{\boxtimes,\boxtimes\},\boxtimes,\square)$\\
\hline
10&$3\cdot2$&$32$&$(\{\square,\square,\square\},\boxtimes)$\\
\end{longtable}

Note that we can always construct a polynomial with the claimed factorization, except when $q=3$ in cases 1, 2, 4 and 6 since then $P$ needs to have four distinct roots in $\F_q$. Elements $\delta\in k[T]/(P)$ with the required properties always exist. Finally note that we have $N_P(\delta)\in \F_q^{\times2}$ in all cases, as required.

It remains to construct surfaces of types 1,2,4 and 6 over $\F_3$. We mimic the above construction: we consider the quadratic forms with the following block diagonal matrices
\[
\Mat(Q_0)=\begin{pmatrix}
 \delta_\infty & & & &\\
 & 1 & & & \\
 & & 1 & & \\
 &  & & 0 & 0   \\
& & & 0 & \delta_{0} \\
\end{pmatrix},~
\Mat(Q_\infty)=\begin{pmatrix}
 0 & & & &\\
 & 1 & & & \\
 & & -1 & & \\
 &  & & 0 & \delta_{0}   \\
& & & \delta_{0} & 0 \\
\end{pmatrix}
\]

The four quadrics of the pencil which are defined over $\F_3$ (namely $Q_t=Q_0+tQ_\infty$, $t\in \F_3$ and $Q_\infty$) are singular of rank four, and the vertex of $Q_0$ --here $(0:0:0:1:0)$-- is the unique singular point of the base of the pencil. We deduce that $X_s$ has geometric type $\A_1$. 

The restricted discriminants are, modulo squares
\[
\Delta(Q_0)\equiv \delta_0\delta_\infty,~\Delta(Q_1)\equiv \Delta(Q_2)\equiv \delta_\infty,~\Delta(Q_\infty)\equiv 1
\]
and we get the types $1,2,4$ and $6$ respectively choosing $(\delta_0,\delta_\infty)$ of the form $(\square,\square)$, $(\boxtimes,\square)$, $(\boxtimes,\boxtimes)$ and $(\square,\boxtimes)$.

\subsubsection{Geometric type $2\A_1$ with $9$ lines}

The factorization of $P$ over $\overline{\F}_q$ is $P(T)=(T-\theta_1)(T-\theta_2)^2(T-\theta_3)^2$ with distinct $\theta_i$, $1\leq i\leq 3$, and $\theta_1\in \F_q$.

All singular quadrics in the pencil have rank four, and we can associate to the del Pezzo surface $X$ a cyclic quadratic module $[\![P,\delta]\!]$. We will always choose some $\delta$ such that $N_P(\delta)$ (or equivalently $\delta(\theta_1)$) is a square in $\F_q^\times$; then we have the congruence $\Delta(\theta_i)\equiv\delta(\theta_i)\bmod \F_q(\theta_i)^{\times2}$ for the restricted discriminants.  

An element of $\Stab(\R_{\irr})$ can
\begin{itemize}
	\item fix the $(-2)$-curves, and the sets $\{C_2,C_2',C_3,C_3'\}$ and $\{C_4,C_4',C_5,C_5'\}$; in this case it acts on the last four couples of complementary conics as
	\begin{itemize}
	\item the identity, of signed type $11\cdot 11$ or 
	\item a bitransposition such as $(C_2C_3')(C_2C_3')$, of signed type $2\cdot 11$ or
	\item four transpositions such as $(C_2C_3')(C_2C_3')(C_4C_5')(C_4C_5')$, of signed type $2\cdot 2$ ;
\end{itemize}
	\item exchange the $(-2)$-curves, and the sets $\{C_2,C_2',C_3,C_3'\}$ and $\{C_4,C_4',C_5,C_5'\}$; in this case it acts on the last four couples of complementary conics as
	
	\begin{itemize}
		\item four transpositions such as $(C_2C_4)(C_2'C_4')(C_3C_5)(C_3'C_5')$, of signed type we note here $\{2\cdot 2\}$ or
		\item a permutation such as $(C_3C_5C'_2C'_4)(C'_3C'_5C_2C_4)$, of signed type $4$.
	\end{itemize}
\end{itemize}
Note that all have even signed types, and the action must be trivial on the remaining couple of conics. We get the following five conjugacy classes in $\Stab(\R_{\irr})$
\begin{align*}
&1\cdot 11\cdot 11,
&
&1\cdot 2 \cdot 11,
&
&1\cdot 2\cdot 2,
&
&&1\cdot \{2\cdot 2\},
&
&1\cdot 4.
\end{align*}

Applying lemma \ref{multipleroots4}, we get the following table

$$
\begin{array}[t]{l|l|l|l}
\hline
N^\circ&\text{Cl. ~Stab.}&\text{Cl. ~Weyl}&(\delta(\theta_1),\delta(\theta_2),\delta(\theta_3))\\
\hline
11&1\cdot 11\cdot 11&11111&(\square,\square,\square)\\
\hline
12&1\cdot 2 \cdot 11&2111&(\square,\boxtimes,\square)\\
\hline
13&1\cdot 2\cdot 2&221&(\square,\boxtimes,\boxtimes)\\
\hline
14&1\cdot \{2\cdot 2\}&221&(\square,\{\square,\square\})\\
\hline
15&1 \cdot 4&41&(\square,\{\boxtimes,\boxtimes\})\\
\hline
\end{array}
$$

\subsubsection{Geometric type $2\A_1$ with $8$ lines}

The factorization of $P$ over $\overline{\F}_q$ is $P(T)=\prod_{i=1}^3(T-\theta_i)(T-\theta_4)^2$ with distinct $\theta_i$, $1\leq i\leq 4$, and $\theta_4\in \F_q$.

There is a rank three singular quadric in the pencil, corresponding to the root $\theta_4$, and we can associate to the del Pezzo surface $X$ a quadratic module of the form $[\![F,\delta]\!]\oplus [\![T-\theta_4,\delta_1]\!]\oplus[\![T-\theta_4,\delta_2]\!]$, where $F(T)=\prod_{i=1}^3(T-\theta_i)$. 

With the help of lemma \ref{lem_DiscriminantQuadriquesSingulieres}, and using a block diagonal form as in the proof of lemma \ref{rationalornot}, we get that the rank four singular quadrics in the pencil have restricted discriminants 
\[
\Delta(\theta_i)=(\theta_i-\theta_4)^2\delta_1\delta_2(-1)^3N_F(\delta)\delta(\theta_i)\equiv -\delta_1\delta_2N_F(\delta)\delta(\theta_i)\bmod \F_q(\theta_i)^{\times2}
\]
for $1\leq i\leq 3$. We shall construct quadratic modules satisfying $\delta_2=-1$ and $\delta_1N_F(\delta)\in \F_q^{\times2}$, so that $\Delta(\theta_i) \equiv \delta(\theta_i)\bmod \F_q(\theta_i)^{\times2}$ for $1\leq i\leq 3$.

An element of $\Stab(\R_{\irr})$ must act on the last two couples of complementary conics as the identity or as $(C_5C_5')$ of signed type $1\overline{1}$. In the first case it acts on the first three couples as an even signed type permutation, ie as an element of $H_3 \rtimes \S_3$, and in the second as an odd signed type permutation, ie as an element of $\Z/2\Z^3 \rtimes \S_3\setminus H_3 \rtimes \S_3 $. We get the following conjugacy classes in $\Stab(\R_{\irr})$

\[
111\cdot 11,
~
1\overline{1}\overline{1}\cdot 11,
~
21\cdot11,
~
\overline{2}\overline{1}\cdot11,
~
3\cdot11,
~
11\overline{1}\cdot1\overline{1},
~
\overline{1}\overline{1}\overline{1}\cdot1\overline{1},
~
2\overline{1}\cdot1\overline{1},
~
\overline{2}1\cdot1\overline{1},
~
\overline{3}\cdot1\overline{1}.
\]

The second part of the signed type is $11$ exactly when the singular points are defined over $\F_q$; lemma \ref{rationalornot}, applied with our convention $\delta_2=-1$, tells us that this happens exactly when $\delta_1$ is a square in $\F_q^\times$.

It remains to use lemma \ref{simpleroots} to determine the factorization pattern of $F$ and the $\delta(\theta_i)\bmod \F_q(\theta_i)^{\times2}$ from the signed type of the action on the first three couples. We get the following table
$$
\begin{array}[c]{l|l|l|l}
\hline
N^\circ&\text{Cl.~Stab.}&\text{Cl.~Weyl}&(\delta(\theta_1),\delta(\theta_2),\delta(\theta_3),\delta_{1})\\
\hline
16&111\cdot 11&11111&(\square,\square,\square,\square)\\
\hline
17&21\cdot11&2111&(\{\square,\square\},\square,\square)\\
\hline
18&1\overline{1}\overline{1}\cdot11&111\overline{1}\overline{1}&(\boxtimes,\boxtimes,\square,\square)\\
\hline
19&11\overline{1}\cdot1\overline{1}&111\overline{1}\overline{1}&(\boxtimes,\square,\square,\boxtimes)\\
\hline
20&2\overline{1}\cdot1\overline{1}&21\overline{1}\overline{1}&(\{\square,\square\},\boxtimes,\boxtimes)\\
\hline
21&\overline{1}\overline{1}\overline{1}\cdot1\overline{1}&1\overline{1}\overline{1}\overline{1}\overline{1}&(\boxtimes,\boxtimes,\boxtimes,\boxtimes)\\
\hline
22&3\cdot11&311&(\{\square,\square,\square\},\square)\\
\hline
23&\overline{2}1\cdot1\overline{1}&\overline{2}11\overline{1}&(\{\boxtimes,\boxtimes\},\square,\boxtimes)\\
\hline
24&\overline{2}\overline{1}\cdot11&\overline{2}11\overline{1}&(\{\boxtimes,\boxtimes\},\boxtimes,\square)\\
\hline
25&\overline{3}\cdot1\overline{1}&\overline{3}1\overline{1}&(\{\boxtimes,\boxtimes,\boxtimes\},\boxtimes)\\
\hline
\end{array}
$$

Note that we can always construct a polynomial over $\F_q$ with the claimed factorization, except when $q=3$ in cases 16, 18, 19 and 21. Elements $\delta\in k[T]/(F)$ with the required properties always exist. Finally note that we have $\delta_1N_F(\delta)\in \F_q^{\times2}$ in all cases.

It remains to construct surfaces of types 16, 18, 19 and 21 over $\F_3$. We mimic the above construction: we consider the quadratic forms with the following block diagonal matrices
\[
\Mat(Q_0)=\begin{pmatrix}
 \delta_\infty & & & &\\
 & \delta_1 & & & \\
 & & 1 & & \\
 &  & & 0 &    \\
& & &  & 0 \\
\end{pmatrix},~
\Mat(Q_\infty)=\begin{pmatrix}
 0 & & & &\\
 & \delta_1 & & & \\
 & & -1 & & \\
 &  & & \delta_{0} &    \\
& & &  & -1 \\
\end{pmatrix}
\]
 
The quadrics with equations $Q_\infty, Q_0+Q_\infty$ and $Q_0-Q_\infty$ have rank $4$, and the quadric $Q_0$ has rank $3$. The singular points of $X_s$ are the intersections of the vertex of $Q_0$ with $Q_\infty$, that is the points $(0:0:0:a:b)$ with $\delta_{0}a^2-b^2=0$. We get a surface of geometric type $2\A_1$ with $8$ lines as above, since the line joining the singularities is not contained in $X_s$. 

Computing the restricted discriminants, and applying lemma \ref{rationalornot}, we see that we get the types $16,18,19$ and $21$ respectively choosing $(\delta_0,\delta_1,\delta_\infty)$ of the form $(\square,\square,\square)$, $(\square,\boxtimes,\boxtimes)$, $(\boxtimes,\boxtimes,\boxtimes)$ and $(\boxtimes,\square,\square)$.

\subsubsection{Geometric type $\A_2$}

The factorization of $P$ over $\overline{\F}_q$ is $P(T)=(T-\theta_1)(T-\theta_2)(T-\theta_3)^3$ with distinct $\theta_i$, $1\leq i\leq 3$, and $\theta_3\in \F_q$.

All singular quadrics in the pencil have rank four, and we can associate to the del Pezzo surface $X$ a cyclic quadratic module $[\![P,\delta]\!]$. We will construct $\delta$ such that $N_P(\delta)$ is a square in $\F_q^\times$; then we have the congruence $\Delta(\theta_i)\equiv\delta(\theta_i)\bmod \F_q(\theta_i)^{\times2}$ for the restricted discriminants.  

An element of $\Stab(\R_{\irr})$ must act on the last three couples of complementary conics as the identity (of signed type $111$) or the permutation $(C_5C_3')(C_3C_5')(C_4C_4')$ (of signed type $2\overline{1}$). It will act on the first two couples as en even signed permutation in the first case, as an odd signed one in the second. We get the following five conjugacy classes in $\Stab(\R_{\irr})$
\begin{align*}
&11\cdot111,
&
&\overline{1}\overline{1}\cdot111,
&
&2\cdot111,
&
&1\overline{1}\cdot2\overline{1},
&
&\overline{2}\cdot2\overline{1}.
\end{align*}
Note that the signed type is completely determined by its first part, and we just have to describe the action on it via lemma \ref{simpleroots}. We get the following table, where we have chosen $\delta(\theta_3)$ so that the convention $N_P(\delta)=\delta(\theta_1)\delta(\theta_2)\delta(\theta_3)^3\in\F_q^{\times2}$ holds
$$
\begin{array}[t]{l|l|l|l}
\hline
N^\circ&\text{Cl. Stab.}&\text{Cl. Weyl}&(\delta(\theta_1),\delta(\theta_2),\delta(\theta_3))\\
\hline
26&11\cdot111&11111&(\square,\square,\square)\\
\hline
27&2\cdot111&2111&(\{\square,\square\},\square)\\
\hline
28&\overline{1}\overline{1}\cdot111&111\overline{1}\overline{1}&(\boxtimes,\boxtimes,\square)\\
\hline
29&1\overline{1}\cdot2\overline{1}&21\overline{1}\overline{1}&(\boxtimes,\square,\boxtimes)\\
\hline
30&\overline{2}\cdot2\overline{1}&2\overline{2}\overline{1}&(\{\boxtimes,\boxtimes\},\boxtimes)\\
\hline
\end{array}
$$ 

\subsubsection{Geometric type $3\A_1$}

The factorization of $P$ over $\F_q$ is $P(T)=(T-\theta_1)(T-\theta_2)^2(T-\theta_3)^2$ with distinct $\theta_i$, $1\leq i\leq 3$. We assume that the rank three conic in the pencil corresponds to $\theta_3$. We can write the quadratic $k[T]$-module $[\![F,\delta]\!]\oplus [\![T-\theta_3,\delta_1]\!]\oplus [\![T-\theta_3,\delta_2]\!]$ where $F(T):=(T-\theta_1)(T-\theta_2)^2$. 

With the help of lemma \ref{lem_DiscriminantQuadriquesSingulieres}, and using a block diagonal form as in the proof of lemma \ref{rationalornot}, we get that the rank four singular quadrics in the pencil have restricted discriminants 
\[
\Delta(\theta_i)=(\theta_i-\theta_3)^2\delta_1\delta_2(-1)^3N_F(\delta)\delta(\theta_i)\equiv -\delta_1\delta_2\delta(\theta_1)\delta(\theta_i)\bmod \F_q^{\times2}
,~1\leq i\leq 2
\]

 We shall construct quadratic modules satisfying $\delta_2=-1$ and $\delta_1\delta(\theta_1)\in \F_q^{\times2}$, so that $\Delta(\theta_i) \equiv \delta(\theta_i)\bmod \F_q^{\times2}$ for $1\leq i\leq 2$.

An element of $\Stab(\R_{\irr})$ must act on the last two couples of complementary conics as the identity (of signed type $11$) or the permutation $(C_5C_5')$ (of signed type $1\overline{1}$). On the second and third couples, it acts as the identity (of signed type $11$) or as the bitransposition $(C_2C_3')(C_3C_2')$ of signed type $2$. We then have to ``complete'' the action by fixing $C_1$ and $C_1'$ if $C_5$ and $C_5'$ are fixed, and by transposing them else. We get the following four conjugacy classes in $\Stab(\R_{\irr})$
\begin{align*}
&1\cdot 11 \cdot 11,
&
&\overline{1}\cdot 11 \cdot 1\overline{1},
&
&1\cdot 2 \cdot 11,
&
&\overline{1}\cdot 2 \cdot 1\overline{1}.
\end{align*}

As above, we use lemma \ref{multipleroots4} (a) to read the restricted discriminant $\Delta(\theta_2)$ from the action on the second and third couples, and lemma \ref{rationalornot} to get $\delta_1$ from the action on the last couple. We get
$$
\begin{array}[c]{l|l|l|l}
\hline
N^\circ&\text{Cl.~Stab.}&\text{Cl.~Weyl}&(\delta(\theta_1),\delta(\theta_2),\delta_{1})\\
\hline
31&1\cdot 11 \cdot 11&11111&(\square,\square,\square)\\
\hline
32&1\cdot 2 \cdot 11&2111&(\square,\boxtimes,\square)\\
\hline
33&\overline{1}\cdot 11 \cdot 1\overline{1}&111\overline{1}\overline{1}&(\boxtimes,\square,\boxtimes)\\
\hline
34&\overline{1}\cdot 2 \cdot 1\overline{1}&21\overline{1}\overline{1}&(\boxtimes,\boxtimes,\boxtimes)\\
\hline
\end{array}
$$

\subsubsection{Geometric type $\A_1\A_2$}

The factorization of $P$ over $\F_q$ is $P(T)=(T-\theta_1)^2(T-\theta_2)^3$ with distinct $\theta_i$, $1\leq i\leq 2$. There is no rank three conic in the pencil, and we can write the quadratic $k[T]$-module in cyclic form $[\![P,\delta]\!]$. 

Lemma \ref{lem_DiscriminantQuadriquesSingulieres} tells us that the restricted discriminants are $\Delta(\theta_i)=N_P(\delta)\delta(\theta_i)=\delta(\theta_1)^2\delta(\theta_2)^3\delta(\theta_i)\equiv \delta(\theta_2)\delta(\theta_i)\bmod \F_q^{\times2}$
for $1\leq i\leq 2$. We shall construct quadratic modules satisfying $\delta(\theta_2)\in \F_q^{\times2}$, so that $\Delta(\theta_i) \equiv \delta(\theta_i)\bmod \F_q^{\times2}$ for $1\leq i\leq 2$.

An element of $\Stab(\R_{\irr})$ must act on the first two couples of complementary conics as the identity (of signed type $11$) or a bitransposition (of signed type $2$). On the last three couples, it acts as the identity in order to have en even signed type (recall that the non trivial action on these three couples has signed type $2\overline{1}$). We get two conjugacy classes in $\Stab(\R_{\irr})$, namely $11\cdot111$ and $2\cdot111$, and using lemma \ref{multipleroots4} (a), we get the table
$$
\begin{array}[t]{l|l|l|l}
\hline
N^\circ&\text{Cl.~Stab}&\text{Cl.~Weyl}&\left(\delta(\theta_1),\delta(\theta_2)\right)\\
\hline
35&11\cdot111&11111&(\square,\square)\\
\hline
36&2\cdot111&2111&(\boxtimes,\square)\\
\hline
\end{array}
$$

\subsubsection{Geometric type $\A_3$ with five lines}

The factorization of $P$ over $\F_q$ is $P(T)=(T-\theta_1)(T-\theta_2)^4$ with distinct $\theta_i$, $1\leq i\leq 2$. There is no rank three conic in the pencil, and we can write the quadratic $k[T]$-module in cyclic form $[\![P,\delta]\!]$. 

Lemma \ref{lem_DiscriminantQuadriquesSingulieres} tells us that the restricted discriminants are $\Delta(\theta_i)=N_P(\delta)\delta(\theta_i)\equiv \delta(\theta_1)\delta(\theta_i)\bmod \F_q^{\times2}$
for $1\leq i\leq 2$. We shall construct quadratic modules satisfying $\delta(\theta_1)\in \F_q^{\times2}$, so that $\Delta(\theta_i) \equiv \delta(\theta_i)\bmod \F_q^{\times2}$ for $1\leq i\leq 2$.

An element of $\Stab(\R_{\irr})$ must act on the last four couples of complementary conics as the identity (of signed type $1111$) or as $(C_2C_5')(C_2'C_5)(C_3C_4')(C_3'C_4)$ (of signed type $22$). Thus it must act as the identity on the first couple in order to have en even signed type. We get two conjugacy classes in $\Stab(\R_{\irr})$, namely $1\cdot1111$ and $1\cdot22$, and using lemma \ref{multipleroots4} (a), we get the table
$$
\begin{array}[t]{l|l|l|l}
\hline
N^\circ&\text{Cl.~Stab.}&\text{Cl.~Weyl}&(\delta(\theta_1),\delta(\theta_2))\\
\hline
37&1\cdot1111&11111&(\square,\square)\\
\hline
38&1\cdot22&221&(\square,\boxtimes)\\
\hline
\end{array}
$$

\subsubsection{Geometric type $\A_3$ with four lines}

The factorization of $P$ over $\overline{\F}_q$ is $P(T)=(T-\theta_1)(T-\theta_2)(T-\theta_3)^3$ with distinct $\theta_i$, $1\leq i\leq 3$. We can write the quadratic $k[T]$-module $[\![F,\delta]\!]\oplus [\![T-\theta_3,\delta_1]\!]\oplus [\![(T-\theta_3)^2,\delta_2]\!]$ where $F(T):=(T-\theta_1)(T-\theta_2)$. 

With the help of lemma \ref{lem_DiscriminantQuadriquesSingulieres}, and using a block diagonal form as in the proof of lemma \ref{rationalornot}, we get that the rank four singular quadrics in the pencil have restricted discriminants 
\[
\Delta(\theta_i)=(\theta_i-\theta_3)^3\delta_1\delta_2^2N_F(\delta)\delta(\theta_i)\equiv (\theta_i-\theta_3)\delta_1\delta(\theta_1)\delta(\theta_2)\delta(\theta_i)\bmod \F_q^{\times2}
,~1\leq i\leq 2
\]
We shall construct quadratic modules satisfying $\delta_1=(\theta_1-\theta_3)(\theta_2-\theta_3)$, so that $\Delta(\theta_i) \equiv (\theta_j-\theta_3)\delta(\theta_j)\bmod \F_q^{\times2}$ for $1\leq i\neq j\leq 2$.

An element of $\Stab(\R_{\irr})$ must act on the last three couples of complementary conics as the identity (of signed type $111$) or the transposition $(C_5C_5')$ (of signed type $11\overline{1}$). We deduce the possible actions on the first two couples, and the following five conjugacy classes in $\Stab(\R_{\irr})$
\begin{align*}
&11\cdot 111,
&
&\overline{1}\overline{1}\cdot 111,
&
&2\cdot 111,
&
&1\overline{1}\cdot 11\overline{1},
&
& \overline{2}\cdot 11\overline{1}.
\end{align*}
The type is completely determined by the action on the first two couples: it is sufficient to use lemma \ref{simpleroots} to obtain the table
$$
\begin{array}[c]{l|l|l|l}
\hline
N^\circ&\text{Cl.~Stab.}&\text{Cl.~Weyl}&((\theta_1-\theta_3)\delta(\theta_1),(\theta_2-\theta_3)\delta(\theta_2))\\
\hline
39&11\cdot111&11111&(\square,\square)\\
\hline
40&2\cdot111&2111&(\{\square,\square\})\\
\hline
41&\overline{11}\cdot111&111\overline{1}\overline{1}&(\boxtimes,\boxtimes)\\
\hline
42&1\overline{1}\cdot 11\overline{1}&111\overline{1}\overline{1}&(\square,\boxtimes)\\
\hline
43&\overline{2}\cdot 11\overline{1}&\overline{2}11\overline{1}&(\{\boxtimes,\boxtimes\})\\
\hline
\end{array}
$$

\subsubsection{Geometric type $4\A_1$}

The factorization of $P$ over $\overline{\F}_q$ is $P(T)=(T-\theta_1)(T-\theta_2)^2(T-\theta_3)^2$ with distinct $\theta_i$, $1\leq i\leq 3$. We can write the quadratic $k[T]$-module $[\![T-\theta_1,\delta]\!]\oplus [\![F,\delta_1]\!]\oplus [\![F,\delta_2]\!]$ where $F(T):=(T-\theta_2)(T-\theta_3)$. We will construct such a module satisfying the additional assumption $\delta_2=-1$.

An element in $\Stab(R_{\irr})$ acts on the four $(-2)$-curves, and we deduce from this its action on the couples $\{C_i,C_i'\}$, $i\in\{1,2,4,5\}$
\begin{itemize}
	\item[(a)] it fixes the four $(-2)$-curves, and the eight conics classes; then its signed type is $1111$;
	\item[(b)] it fixes two $(-2)$-curves (necessarily corresponding to the same rank $3$ quadric of the pencil, thus in the same ``component'' of the graph), and permutes the other ones; for instance the transposition $(R_3R_4)$ corresponds to $(C_5C_5')$, of signed type $111\overline{1}$;
	\item[(c)] it acts as the bitransposition $(R_1R_2)(R_3R_4)$ on $(-2)$-curves, and on conic classes as $(C_2C_2')(C_5C_5')$, its signed type is $11\overline{11}$
	\item[(d)] it acts as the bitransposition $(R_1R_3)(R_2R_4)$ on $(-2)$-curves, and on conic classes as $(C_1C_4)(C_2C_5)(C_1'C_4')(C_2'C_5')$ of signed type $22$;
	\item[(e)] it acts as the four-cycle $(R_1R_3R_2R_4)$ on $(-2)$-curves and on conic classes as $(C_1C_4)(C_1'C_4')(C_2C_5C_2'C_5')$ of signed type $2\overline{2}$;
 \end{itemize}
Then we complete with the correct action on $\{C_3,C_3'\}$ in order to get an even signed permutation. We get five conjugacy classes
\begin{align*}
&1\cdot 11 11,
&
& \overline{1}\cdot 111\overline{1},
&
& 1\cdot 11\overline{1}\overline{1},
&
& 1\cdot 22,
&
& \overline{1}\cdot 2\overline{2}.
\end{align*}

The two rank three conics can be defined over $\F_q$ (we have $\theta_2,\theta_3\in \F_q$), or over $\F_{q^2}$, and conjugate over $\F_q$. Applying lemma \ref{rationalornot} to $X$ in the first case, and to $X\otimes \F_{q^2}$ in the second, we get the following table under the assumption $\delta_2=-1$

$$
\begin{array}[c]{l|l|l|l}
\hline
N^\circ&\text{Cl.~Stab.}&\text{Cl.~Weyl}&(\delta_{1}(\theta_2),\delta_{1}(\theta_3))\\
\hline
44&1\cdot 1111&11111&(\square,\square)\\
\hline
45&1\cdot11\overline{1}\overline{1}&111\overline{1}\overline{1}&(\boxtimes,\boxtimes)\\
\hline
46&1\cdot22&221&(\{\square,\square\})\\
\hline
47&\overline{1}\cdot111\overline{1}&111\overline{1}\overline{1}&(\boxtimes,\square)\\
\hline
48&\overline{1}\cdot2\overline{2}&2\overline{2}\overline{1}&(\{\boxtimes,\boxtimes\})\\
\hline
\end{array}
$$

\subsubsection{Geometric type $2\A_1\A_2$}

The factorization of $P$ over $\F_q$ is $P(T)=(T-\theta_1)^3(T-\theta_2)^2$ with distinct $\theta_i$, $1\leq i\leq 2$. We can write the quadratic $k[T]$-module $[\![(T-\theta_1)^3,\delta]\!]\oplus [\![T-\theta_2,\delta_1]\!]\oplus [\![T-\theta_2,\delta_2]\!]$. We will construct such a module satisfying the additional assumptions $\delta=1$, $\delta_2=-1$.

We have already seen that an element in $\Stab(R_{\irr})$ can act on the first three couples with a signed type $111$ or $2\overline{1}$, and on the last two with a signed type $11$ or $1\overline{1}$. This leaves us with two conjugacy classes, namely $11\cdot111$ and $2\overline{1}\cdot1\overline{1}$, which are determined by the action on the last couple, ie on the two $\A_1$-singularities. Applying lemma \ref{rationalornot}, and from our assumption $\delta_2=-1$, we get the table 

$$
\begin{array}[c]{l|l|l|l}
\hline
N^\circ&\text{Cl.~Stab.}&\text{Cl.~Weyl}&\delta_{1}\\
\hline
49&111\cdot11&11111&\square\\
\hline
50&2\overline{1}\cdot1\overline{1}&21\overline{1}\overline{1}&\boxtimes\\
\hline
\end{array}
$$

\subsubsection{Geometric type $\A_1\A_3$}

The factorization of $P$ over $\F_q$ is $P(T)=(T-\theta_1)^2(T-\theta_2)^3$ with distinct $\theta_i$, $1\leq i\leq 2$. We can write the quadratic $k[T]$-module $[\![(T-\theta_1)^2,\delta]\!]\oplus [\![(T-\theta_2)^2,\delta_1]\!]\oplus [\![T-\theta_2,\delta_2]\!]$. We will construct such a module satisfying the additional assumptions $\delta_1=\delta_2=1$. The restricted discriminant of the rank four quadric in the pencil (corresponding to root $\theta_1$) is
\[
\Delta(\theta_1)=\delta(\theta_1)(\theta_1-\theta_2)^3\delta_1^2\delta_2\equiv \delta(\theta_1)(\theta_1-\theta_2) \bmod \F_q^{\times2}
\]

An element in $\Stab(R_{\irr})$ acts on the first two couples with a signed type $11$ or $2$, and on the last three with a signed type $111$ or $11\overline{1}$. This leaves us with two conjugacy classes, namely $111\cdot11$ and $2\cdot111$, and they are determined by the action on the first two couples. Applying lemma \ref{multipleroots4} (a), and from our assumptions, we get the table 

$$
\begin{array}[c]{l|l|l|l}
\hline
N^\circ&\text{Cl.~Stab}&\text{Cl.~Weyl}&(\theta_1-\theta_2)\delta(\theta_1)\\
\hline
51&111\cdot11&11111&\square\\
\hline
52&2\cdot111&2111&\boxtimes\\
\hline
\end{array}
$$

\subsubsection{Geometric type $\A_4$}

The factorization of $P$ over $\F_q$ is $P(T)=(T-\theta_1)^5$, and we can consider the  cyclic quadratic $k[T]$-module $[\![(T-\theta_1)^5,\delta]\!]$.

An element in $\Stab(R_{\irr})$ can only act on the graph trivially (the other automorphism of this graph is $(C_1C_5')(C_1'C_5)(C_2C_4')(C_2'C_4)(C_3C_3')$ which has odd signed type $22\overline{1}$). Thus for any choice of $\delta$ we get the arithmetic type $53$.

%$$
%\begin{array}[t]{l|l|l|l}
%\hline
%N^\circ&\text{Cl.~Stab.}&\text{Cl.~Weyl}&\delta\\
%\hline
%53&11111&11111&\text{free}\\
%\hline
%\end{array}
%$$

\subsubsection{Geometric type $\D_4$}

The factorization of $P$ over $\F_q$ is $P(T)=(T-\theta_1)(T-\theta_2)^4$ with distinct $\theta_i$, $1\leq i\leq 2$. We can write the quadratic $k[T]$-module $[\![T-\theta_1,\delta]\!]\oplus [\![(T-\theta_2)^3,\delta_1]\!]\oplus [\![T-\theta_2,\delta_2]\!]$. We will construct such a module satisfying the additional assumptions $\delta=\delta_2=1$. In this case the restricted discriminant of the rank four quadric in the pencil (corresponding to root $\theta_1$) is
\[
\Delta(\theta_1)=-(\theta_2-\theta_1)^4\delta_1^3\delta_2\equiv \delta_1 \bmod \F_q^{\times2}
\]

An element in $\Stab(R_{\irr})$ acts on the last four couples with a signed type $1111$ or $111\overline{1}$, and on the first one with a signed type $1$ or $\overline{1}$. This leaves us with two conjugacy classes, namely $1\cdot1111$ and $\overline{1}\cdot111\overline{1}$, and they are determined by the action on the first couple. Applying lemma \ref{simpleroots} to the value of $\Delta(\theta_1)$ above, we get the table 

$$
\begin{array}[c]{l|l|l|l}
\hline
N^\circ&\text{Cl.~Stab}&\text{Cl.~Weyl}&\delta_{1}\\
\hline
54&1\cdot1111&11111&\square\\
\hline
55&\overline{1}\cdot111\overline{1}&111\overline{1}\overline{1}&\boxtimes\\
\hline
\end{array}
$$

\subsubsection{Geometric type $2\A_1\A_3$}

The factorization of $P$ over $\F_q$ is $P(T)=(T-\theta_1)^2(T-\theta_2)^3$ with distinct $\theta_i$, $1\leq i\leq 2$. We can write the quadratic $k[T]$-module $[\![T-\theta_1,\delta_1]\!]\oplus [\![T-\theta_1,\delta_2]\!]\oplus [\![(T-\theta_2)^2,\eta_1]\!]\oplus [\![T-\theta_2,\eta_2]\!]$. We will construct such a module satisfying the additional assumptions $\delta_2=-1$ and $\eta_1=\eta_2=1$.

An element in $\Stab(R_{\irr})$ acts on the first two couples with a signed type $11$ or $1\overline{1}$, and on the last three with a signed type $111$ or $11\overline{1}$. This leaves us with two conjugacy classes, namely $11\cdot111$ and $1\overline{1}\cdot11\overline{1}$, and they are determined by the action on the first two couples, ie by the action on the two $\A_1$-singularities. Applying lemma \ref{rationalornot} with $\delta_2=-1$, we get the table 

$$
\begin{array}[c]{l|l|l|l}
\hline
N^\circ&\text{Cl.~Stab.}&\text{Cl.~Weyl}&\delta_{1}\\
\hline
56&11\cdot111&11111&\square\\
\hline
57&1\overline{1}\cdot11\overline{1}&111\overline{1}\overline{1}&\boxtimes\\
\hline
\end{array}
$$

\subsubsection{Geometric type $\D_5$}

The factorization of $P$ over $\F_q$ is $P(T)=(T-\theta)^5$. We can write the quadratic $k[T]$-module $[\![(T-\theta)^4,\delta_1]\!]\oplus [\![T-\theta,\delta_2]\!]$.

An element in $\Stab(R_{\irr})$ can only act on the graph trivially (the other automorphism of this graph is $(C_5C_5')$ which has odd signed type $1111\overline{1}$). Thus for any choice of $\delta_i$ we get the arithmetic type $58$.
%
%$$
%\begin{array}[t]{l|l|l|l}
%\hline
%N^\circ&\text{Cl.~Stab}&\text{Cl.~Weyl}&\delta_1,\delta_2\\
%\hline
%58&11111&11111&\text{free}
%\end{array}
%$$

\section{Construction of singular del Pezzo surfaces of degree three.}
\label{sec5}

The aim of this section is to prove the last two assertions of Theorem \ref{alltypes}. 

We blow up the degree four surfaces from the preceding section at well chosen rational points in order to construct degree three surfaces, but we also use some direct constructions by blowing up some well chosen configurations of points in the projective plane.

\subsection{Blowing up degree four surfaces}

Our first construction of degree three del Pezzo surfaces is by blowing up a rational point not lying on any of the negative curves of a degree four del Pezzo surface. In this way we get a surface of the same Dynkin type (note that for degree three, the geometric type coincides with the Dynkin type), whose zeta function is the one of the degree four surface, divided by $1-qT$. This (and the Galois action on the $(-2)$-curves) makes it very easy to control the arithmetic type of the new surface.

In order to do this, we count the number of rational points not lying on any negative curve on a degree four weak del Pezzo surface of a given arithmetic type. If a rational point lies on a negative curve, then the curve must be defined over $\F_q$, or the point is the intersection of two Galois conjugate curves (thus defined over $\F_{q^2}$). We deduce that the number we are looking for is (note that there is no $3$-cycle in any graph of negative curves, and such three curves cannot be concurrent)
\begin{eqnarray*}
N & = & \#X(\F_q)-(\# \N(\F_q)(q+1)-I_1)-I_2\\
& = & q^2-tq+1-(\# \N(\F_q)(q+1)-I_1)-I_2\\
\end{eqnarray*}
where $t$ is the trace of the action of the Frobenius operator on $\Pic(X\otimes\overline{\F}_q)$, $\N(\F_q)$ is the number of negative curves defined over $\F_q$, $I_1$ is the number of their intersection points (which is readily computed as an intersection number from the $\N(\F_q)$ curves above), and $I_2$ is the number of couples of conjugate negative curves intersecting themselves.

We compute these data and present them in table \ref{galnegcurves}, which is constructed as follows. For each geometric type of degree four del Pezzo surfaces, we consider the graph of negative curves given in \cite[Proposition 6.1]{ct}. We denote the curves in the same way, except for the $(-2)$-curves : recall that we denote by $r_i$ the curve with class $E_i-E_{i+1}$, and by $r_{ijk}$ the curve with class $E_0-E_i-E_j-E_k$.

For each arithmetic type in degree four, we give in the second column an element of the Weyl group in the conjugacy class corresponding to the Frobenius action. We present it as a composite of reflections, where we denote by $s_{ij}$ (\emph{resp.} $s_{ijk}$) the reflection around the root $E_i-E_j$ (\emph{resp.} $E_0-E_i-E_j-E_k$). In the third column we present the action of this element on the negative curves, as a product of cycles. As explained above, this is sufficient to determine the numbers $t,\N(\F_q),I_1,I_2$ and $N$; they are given in the following columns. 

The last column gives the type of the degree three surface obtained by blowing up a rational point not lying on any negative curve, if any.

\begin{center}
\begin{table}[h]
\label{galnegcurves}
{\scriptsize
$\begin{array}{|c|l|c|l|l|c|}
\hline
\textrm{Type} & \textrm{Frobenius in }W(\D_5)  & \textrm{Galois action on neg. curves} &  (t,\# \N(\F_q) ,I_1 ,I_2)& N & \textrm{Type} \\
\hline
\hline
1 & \A_1 ( 12 ) & Id &   (6,13 ,24 ,0) & q^2-7q+12 & 1 \\
2 &  s_{12}  & (\ell_{1}\ell_2)(\ell_{13}\ell_{23})(\ell_{14}\ell_{24}) &   (4,7,6,0) & q^2-3q & 2\\
3 &  s_{123}  & (\ell_{45}q)(\ell_1\ell_{23})(\ell_2\ell_{13})(\ell_3\ell_{12}) &   (4,5,0,0) & q^2-q-4 & 2 \\
4 & s_{12} \circ s_{345} & (\ell_{12}q)(\ell_1\ell_{2})(\ell_3\ell_{45})(\ell_5\ell_{34})(\ell_{14}\ell_{24})(\ell_{13}\ell_{23})  & (2 , 1 ,  4 , 0) & q^2+q+4 & 3\\
5 & s_{12} \circ s_{123} &  (\ell_{3}q)(\ell_2\ell_{13})(\ell_1\ell_{23})(\ell_5\ell_{34})(\ell_{12}\ell_{45})  & (2 , 3 ,  2 , 2) & q^2-q-2 & 3\\
6 & s_{12} \circ s_{345}\circ s_{123} & (\ell_1\ell_{13})(\ell_{3}q)(\ell_2\ell_{23})(\ell_5\ell_{34})(\ell_{12}\ell_{45})(\ell_{14}\ell_{24})  & (0 , 1 , 0 , 4) & q^2-q-4 & 4 \\
7 & s_{12} \circ s_{23}&  (\ell_1\ell_2\ell_3)(\ell_{13}\ell_{12}\ell_{23})(\ell_{14}\ell_{24}\ell_{34})  & (3 , 4 , 3  , 0) & q^2-q & 6 \\
8 & s_{13} \circ s_{23} \circ s_{145} \circ s_{123} & (\ell_{1}q\ell_{3}\ell_{13}) (\ell_{2}\ell_{23}\ell_{45}\ell_{12}) (\ell_{14}\ell_{5}\ell_{34}\ell_{24}) & (0 , 1 , 0 , 0) & q^2-q & 8 \\
9 &  s_{345}\circ s_{23} \circ s_{12} & (\ell_{23}q\ell_{12}\ell_{13}) (\ell_{2}\ell_{1}\ell_{45}\ell_{3}) (\ell_{14}\ell_{5}\ell_{34}\ell_{24})   & (2 , 1 , 0 , 0) & q^2+q & 9 \\
10 & s_{12} \circ s_{23} \circ s_{123}  & (\ell_{14}\ell_{24}\ell_{34})(\ell_{45}q)(\ell_{1}\ell_{13}\ell_{3}\ell_{23}\ell_{2}\ell_{12}) & (1 , 2 , 1 , 0) & q^2-q & 7\\
\hline
11 & 2\A_1 ( 9 ) & Id & (6 , 11 , 16 , 0) & q^2-5q+6 & 12\\
12 & s_{123} & (q\ell_{45})(\ell_1\ell_{23})(\ell_3\ell_{12})   & (4 , 5 , 4 , 0) &q^2-q-2 & 13\\
13 & s_{123} \circ s_{145} &  (q\ell_{1})(\ell_3\ell_{12})(\ell_5\ell_{14})(\ell_{45}\ell_{23})  & (2 , 3 , 2 , 2) & q^2-q-2 & 15\\
14 & s_{24} \circ s_{35} & (\ell_3\ell_{5})(\ell_{45}\ell_{23})(\ell_{14}\ell_{12})(r_2r_4) & (2 , 3 , 1 , 1) & q^2-q-2 & 14\\
15 & s_{145}\circ s_{24} \circ s_{35}& (q\ell_{23}\ell_1\ell_{45})(\ell_{3}\ell_{14}\ell_{12}\ell_{5})(r_2r_4)   & (2 , 1 , 0 , 0) & q^2+q & 19 \\
\hline
16 & 2\A_1 ( 8 ) & Id & (6 , 10 , 12 , 0) & q^2-4q+3 & 12 \\
17 & s_{12} & (\ell_{1}\ell_{2})(\ell_{14}\ell_{24}) &   (4 , 6 , 6 , 0) &q^2-2q+1 & 13\\
18 & s_{345}\circ s_{12} & (\ell_{1}\ell_{2})(\ell_{14}\ell_{24})(\ell_{3}\ell_{45})(\ell_{34}\ell_{5}) &   (2 , 2 , 0 , 0) & q^2-1 & 15 \\
19 & s_{15} \circ s_{234} &  (\ell_{1}\ell_{5})(\ell_{2}\ell_{34})(\ell_{3}\ell_{24})(\ell_{45}\ell_{14})(r_{123}r_4) &   (2 ,0 , 0  , 0) & q^2+q+1 & 14\\
20 & s_{134}\circ s_{245}\circ s_{25} & (\ell_{1}\ell_{34})(\ell_{2}\ell_{24})(\ell_{3}\ell_{14})(\ell_{45}\ell_{5})(r_{123}r_4)  &   (0 ,0 , 0  , 2) & q^2-1 & 16\\
21  & s_{145}\circ s_{234}\circ s_{15}  \circ s_{23} & (\ell_{1}\ell_{14})(\ell_{2}\ell_{24})(\ell_{3}\ell_{34})(\ell_{45}\ell_{5})(r_{123}r_4) & (-2 ,0 , 0 , 4) & q^2-2q-3 & 17\\
22 & s_{12}  \circ s_{23} & (\ell_{1}\ell_{2}\ell_{3})(\ell_{14}\ell_{24}\ell_{34})  & (3 , 4 , 3 , 0) & q^2-q & 18\\
23 & s_{134}\circ s_{15}  \circ s_{25} &  (\ell_{1}\ell_{5}\ell_{2}\ell_{34})(\ell_{3}\ell_{14}\ell_{45}\ell_{24})(r_{123}r_4)  & (2 , 0 , 0 , 0) & q^2+2q+1 & 19\\
24 &  s_{145}\circ s_{12}  \circ s_{23} & (\ell_{1}\ell_{2}\ell_{3}\ell_{45})(\ell_{5}\ell_{14}\ell_{24}\ell_{34})   & (2 , 2 , 0  , 0) &q^2-1 & 20\\
25 &  s_{124}\circ s_{15}  \circ s_{35} \circ s_{23} &  (\ell_{1}\ell_{5}\ell_{3}\ell_{14}\ell_{45}\ell_{34})(\ell_{2}\ell_{24})(r_{123}r_4)  & (1 , 0 , 0 , 1) & q^2+q & 21\\
\hline
26 & \A_2 ( 8 ) & Id & (6 , 10 , 13 , 0) & q^2-4q+4 & 22\\
27 & s_{12} & (\ell_{1}\ell_{2})(\ell_{13}\ell_{23}) &   (4 , 6 , 5 , 0) & q^2-2q & 23 \\
28 & s_{345}\circ s_{12} & (\ell_{1}\ell_{2})(\ell_{13}\ell_{23})(\ell_{5}\ell_{34})(\ell_{12}q) &  (2 , 2 , 1 , 0) & q^2 & 24\\
29 & s_{235}\circ s_{124}\circ s_{14} &  (\ell_{1}\ell_{12})(\ell_{2}q)(\ell_{5}\ell_{23})(\ell_{13}\ell_{34})(r_3r_4) &  (0 , 0 , 0 , 3) & q^2-2 & 25\\
30 & s_{135}\circ s_{124}\circ s_{14} \circ s_{24} & (\ell_{1}\ell_{12}\ell_{2}q)(\ell_{5}\ell_{13}\ell_{34}\ell_{23})(r_3r_4) &   (0 , 0 , 0 , 1) & q^2 & 28\\
\hline
31 & 3\A_1 ( 6 ) & Id & (6 , 9 , 10 , 0) & q^2 -3q+2 & 31 \\
32 & s_{145} & (\ell_{1}\ell_{45})(\ell_{5}\ell_{14}) &   (4 , 5 , 4 , 0) & q^2- q & 32 \\
33 & s_{234}\circ s_{15} & (\ell_{5}\ell_{1})(\ell_{3}\ell_{24})(\ell_{45}\ell_{14}) (r_{4}r_{123}) &   (2 , 1 , 0 , 0) &q^2 +q & 33\\
34 & s_{234}\circ s_{145}\circ s_{15} & (\ell_{1}\ell_{14})(\ell_{5}\ell_{45})(\ell_{3}\ell_{24}) (r_{4}r_{123}) &   (0 , 1 , 0 , 2) & q^2 -q-2 & 34\\
\hline
35 & \A_1\A_2 ( 6) & Id & (6 , 9 , 10 , 0) & q^2 -3q+2 & 37\\
36 & s_{345} & (\ell_{5}\ell_{34})(q\ell_{12}) &   (4 , 5 , 4 , 0) & q^2 -q & 38 \\
\hline
37 & \A_3  (5) & Id & (6 , 8 , 8 , 0) & q^2 -2q+1 & 40\\
38 & s_{125}\circ s_{134} & (\ell_{5}\ell_{12})(q\ell_{1})(r_{2}r_{4}) &   (2 , 2 , 1 , 1) & q^2 -1 & 42\\
\hline
39 & \A_3  (4) & Id & (6 , 7 , 6 , 0) & q^2 -q & 40\\
40 & s_{345} & (\ell_{5}\ell_{34}) &   (4 , 5 , 4 , 0) & q^2 -q & 41\\
41 & s_{345}\circ s_{12} & (\ell_{1}\ell_{2})(\ell_{5}\ell_{34}) &   (2 , 3 , 2 , 0) & q^2 -q& 43\\
42 & s_{234}\circ s_{15} & (\ell_{1}\ell_{5})(\ell_{2}\ell_{34})(r_{123}r_{4}) &   (2 , 1 ,  0 , 0) & q^2 +q& 42 \\
43 & s_{134}\circ s_{15} \circ s_{25} & (\ell_{1}\ell_{5}\ell_{2}\ell_{34})(r_{123}r_{4}) &   (2 , 1 , 0 , 0) & q^2 +q & 44 \\
\hline
44 & 4\A_1 ( 4) & Id & (6 , 8 , 8 , 0) & q^2 -2q+1 & 45\\
45 & s_{134}\circ s_{25} & (\ell_{2}\ell_{5})(\ell_{3}\ell_{14})(r_{1}r_{345})(r_{4}r_{123}) &   (2 , 0 , 0 , 0) & q^2 +2q+1 & 46 \\
46 & s_{14} \circ s_{25} & (\ell_{2}\ell_{5})(r_{1}r_{4})(r_{123}r_{345}) &   (2  , 2 , 0 , 0) & q^2-1 & 46\\
47 & s_{124}\circ s_{35} & (\ell_{2}\ell_{14})(\ell_{3}\ell_{5})(r_{4}r_{123}) &   (2 , 2 , 0 , 0) & q^2-1 & 47\\
48 & s_{145}\circ s_{14} \circ s_{25} \circ s_{35} & (\ell_{2}\ell_{14}\ell_{5}\ell_{3})(r_{1}r_{4}r_{345}r_{123}) &   (0 , 0 , 0 , 0) & q^2-1 & 49 \\
\hline
49 & 2\A_1\A_2  (4) & Id & (6 , 8 , 8 , 0) & q^2 -2q+1 & 50\\
50 & s_{134}\circ s_{245} \circ s_{25} & (\ell_{3}\ell_{14})(\ell_{5}\ell_{45})(r_{1}r_{2})(r_{123}r_{4}) &   (0 , 0 , 0 , 2) & q^2 -1 & 51\\
\hline
51 & \A_1\A_3 ( 3) & Id & (6 , 7 , 6 , 0) & q^2 -q & 52\\
52 & s_{345} & (\ell_{34}\ell_5) &   (4 , 5 , 4 , 0) & q^2 -q & 53\\
\hline
53 & \A_4 ( 3) & Id & (6 , 7 , 6 , 0) & q^2 -q & 60\\
\hline
54 & \D_4 ( 2 )& Id & (6 , 6 , 5 , 0) &q^2 & 62 \\
55 & s_{234} \circ s_{15} & (\ell_{5}\ell_{2})(r_{4}r_{123}) &   (2 , 2 , 1 , 0) &q^2 & 63 \\
\hline
56 & 2\A_1\A_3 ( 2 ) & Id & (6 , 7 , 6 , 0) & q^2 -q & 67\\
57 & s_{134} \circ s_{25} & (\ell_{5}\ell_{2})(r_{1}r_{345})(r_{123}r_{4}) &   (2 , 1 , 0 , 0) & q^2 +q & 68 \\
\hline
58 & \D_5 ( 1 ) & Id & (6 , 6 , 5 , 0) &q^2 & 72\\
\hline
\end{array}$
\caption{Galois action on negative curves in degree $4$}
}
\end{table}
\end{center}
 
Note that this is sufficient to prove the existence of a degree three del Pezzo surface of the type given in the last column over the finite field $\F_q$ as long as the number given in the last but one column is positive for the given $q$.

Assume that there exists a weak del Pezzo surface of degree three and arithmetic type $1$, defined over $\F_q$; the Galois action on the Picard group is trivial, and all its negative curves are defined over $\F_q$. It contains an exceptional curve that does not meet the $(-2)$-curve. Contracting this curve gives a degree three del Pezzo surface of arithmetic type $1$, and the image of the curve is a point that does not lie on any negative curve. But such a point does not exist when $q=3$ from the first line of the above table.

One shows in the same way that a weak del Pezzo surface of degree three and arithmetic type $12$ does not exist over $\F_3$. The only difference is that contracting an exceptional curve that does not meet the $(-2)$-curve gives a point outside the negative curves on a weak del Pezzo surface of degree four and arithmetic type $11$ or $16$; such a point does not exist when $q=3$.

\subsection{Other constructions}

There remains $77-48=29$ types to be constructed over any finite field with odd characteristic.

For those we give an alternative contruction: we often present them as blow ups of the projective plane at well-chosen rational points, and sometimes as blow ups of a degree four del Pezzo surface at some point lying on one or two negative curve(s).

First remark that we get all arithmetic types for the geometric type $\A_1$ over $\F_q$ by blowing up the points of a degree $6$ zero-dimensional subscheme of a smooth conic, everything being defined over $\F_q$, when $q$ is large enough. Note that the only $(-2)$-curve is the strict transform of the conic. Playing on the fields of definition of the points in the subscheme, we get all partitions of the integer $6$, and all types (note the stabilizer here is $\S_6$). In this way we construct a surface for each of the remaining types. We get type $5$ by blowing up two points defined over $\F_{q^3}$, type $10$ by blowing up one point defined over $\F_{q}$ and one over $\F_{q^5}$, and type $5$ by blowing up one point defined over $\F_{q^6}$. Since a smooth conic defined over $\F_q$ has $q^k+1$ points over $\F_{q^k}$, such configurations exist over any finite field (also of even characteristic).

We get all arithmetic types for the geometric type $\A_2$ over $\F_q$ by blowing up the points of two degree $3$ zero-dimensional subschemes lying on two different lines, but not containing their intersection point, everything being defined over $\F_q$ for $q$ large enough. Note that the $(-2)$-curves are the strict transforms of the two lines. As above, the fields of definitions of the points, and the lines, give the arithmetic type. We get types
\begin{itemize}
	\item[$26$] when we choose the two lines to be defined over $\F_q$, with three points defined over $\F_q$ on one, and three conjugate points defined over $\F_{q^3}$ on the other line.
		\item[$27$] when we choose the two lines to be defined over $\F_q$, with three conjugate points defined over $\F_{q^3}$ on the both lines.
			\item[$29$] when we choose the two lines to be defined over $\F_q$, with one point defined over $\F_q$ and two defined over $\F_{q^2}$ on one, and three conjugate points defined over $\F_{q^3}$ on the other line.
				\item[$30$] when we choose the two lines to be conjugate, defined over $\F_{q^2}$, with one point defined over $\F_{q^6}$ on one, and all its conjugates points.
\end{itemize}

We get type $35$ by blowing up three conjugate points defined over $\F_{q^3}$ in the projective plane, then three infinitely near conjugate points. Note that the $(-2)$-curves are the strict tranforms of the exceptional divisors of the first blow up.

In order to construct a surface of type $39$, we start from a degree four del Pezzo surface of arithmetic type $22$. Such a surface contains two exceptional curves defined over $\F_q$, that meet together and each one meets exactly one $(-2)$-curve. We blow up a point defined over $\F_q$ on one of these exceptional curve, different from their intersection point and not lying on a $(-2)$-curve. This is possible for any $q$, and we get the desired surface.

In order to get a degree three surface of geometric type $4\A_1$, we start with four lines in general position in the projective plane, their union being defined over $\F_q$. Then we blow up their six intersection points; the $(-2)$-curves are the strict transforms of the lines. Then we get the arithmetic types by playing on the fields of definition of the lines. For instance we get type $48$ by choosing three conjugates lines defined over $\F_{q^3}$ and the last one defined over $\F_q$.

If we start from a degree four surface of geometric type $2\A_1$ defined over $\F_q$, and we blow up a rational point which lies at the intersection of two exceptional curve (ie corresponding to a ``middle'' edge of the graph of negative curves), we get a degree three surface of geometric type $2\A_2$ over $\F_q$. In this way we get  types $54, 55, 56, 57, 58$ 
$59$ in degree three respectively from types $16, 17, 20, 21, 22$ et $25$ in degree four. Note that in any case these degree four surfaces contain two intersecting exceptional curves which are either defined over $\F_q$, or defined over $\F_{q^2}$ and conjugate over $\F_q$: their intersection point is always a rational point.

We get type $61$ by blowing up $p_1\prec p_2\prec p_3\prec p_4$ four infinitely near points with the first three not collinear, and a point of degree $2$ on a line passing through $p_1$ but whose strict transform by the first blow up does not contain $p_2$.

We get a degree three surface of type $64$ by blowing up three collinear points $p_1,p_2,p_3$ defined over $\F_{q^3}$ and conjugate over $\F_q$ then three conjugate points $p_4,p_5,p_6$ with $p_{i}\prec p_{i+3}$.

We get a degree three del Pezzo surface of geometric type $\A_1 2\A_2$ when we blow up the point $\ell_1\cap\ell_{14}$ on a degree four del Pezzo surface of type $3\A_1$. This point is defined over $\F_q$ when the degree four surface has one of the types $31$ or $34$, and we get a surface of type $65$ or $66$ in this way.

Most of the remaining constructions are completely geometric and can be found in \cite[pp 493-494]{dolga}. Namely we get type 
\begin{itemize}
	\item[69] by blowing up $p_1\prec p_2\prec p_3\prec p_4\prec p_5$ with $p_1, p_2, p_3$ not collinear and $p_6$ on the (smooth) conic defined by the first five ones;
	\item[70] by blowing up $p_1\prec p_2\prec p_3\prec p_4\prec p_5\prec p_6$ with $p_1, p_2, p_3$ not collinear;
	\item[73] by blowing up $p_1\prec p_2\prec p_3\prec p_4\prec p_5 \prec p_6$ with $p_1, p_2, p_3$ not collinear and $p_6$ on the conic defined by the first five ones;
	\item[77] by blowing up $p_1\prec p_2\prec p_3\prec p_4\prec p_5\prec p_6$ with $p_1, p_2, p_3$ collinear.
\end{itemize}

Type $71$ in degree three is obtained by blowing up a rational point lying on the exceptional curve $\ell_2$ (but not on any $(-2)$-curve) in a degree four del Pezzo surface of arithmetic type $52$.

We end with the geometric type $3\A_2$; we blow up three points $p_1,p_2,p_3$ in general position in the projective plane. Then, on the resulting degree $6$ ordinary del Pezzo surface, we blow up the intersection points $\ell_1\cap \ell_{12}$, $\ell_2\cap \ell_{23}$ and $\ell_3\cap \ell_{13}$. The $(-2)$-curves on the corresponding degree three del Pezzo surface are the strict transforms of these six lines. Finally, we get a surface of type $74$ when the $p_i$ are rational points, of type $75$ when one is rational and the two other ones defined over $\F_{q^2}$ and conjugate over $\F_q$, and of type $76$ when the three points are defined over $\F_{q^3}$ and conjugate over $\F_q$.

\subsection{Type 36}

It remains to construct a del Pezzo surface of degree three and arithmetic type $36$. Such a surface has geometric type $3\A_1$, and the inverse of the weight two part of its zeta function is $\Phi_1^2\Phi_2\Phi_3^2$.

We start with a degree one del Pezzo surface $S$ obtained by blowing up two degree three points $p_1,p_2=p_1^\sigma,p_3=p_1^{\sigma^2}$ and $q_1,q_2=q_1^\sigma,q_3=q_1^{\sigma^2}$ in $\P^2(\F_{q^3})$ and a degree two point $r_1,r_2=r_1^\sigma$ in $\P^2(\F_{q^2})$, such that there exists a conic passing through $p_1,p_2,q_1,q_2,r_1,r_2$ and defined over $\F_{q^3}$, but there are no other $(-2)$-curve on the resulting degree one surface than the strict transforms of this conic and its conjugates.  We report the proof of the existence of such a configuration.

The surface $S$ has geometric type $3\A_1$ by our hypothesis on its $(-2)$-curves (note that the strict transforms are separated by the blowups), and the inverse of the weight two part of its zeta function is $(X-1)(X^2-1)(X^3-1)^2=\Phi_1^4\Phi_2\Phi_3^2$. We can contract the strict transform of the line $(r_1r_2)$, and of the conic $(q_1q_2q_3r_1r_2)$: these are disjoint exceptional curves that do not meet the $(-2)$-curves, both defined over $\F_q$. In this way we get a degree three surface with geometric type $3\A_1$ and the desired zeta function.

It remains to show the existence of such a configuration of points over any finite field of odd characteristic.

We start with two elements $\theta\in \F_{q^3}\setminus \F_q$, $\eta\in \F_{q^2}\setminus \F_q$, whose respective minimal polynomials over $\F_q$ are $\pi_\theta(x):=X^3+aX^2+bX+c$ and $\pi_\eta(X)=X^2+tX+n$.

We consider the points $p_1=(\theta:\theta^2:1)$, $q_1=(\theta:\theta^2:u)$ and $r_1=(\eta:1:0)$ for some $u\in \F_{q^3}\setminus\{0,1\}$.

From Pascal's theorem, the points $p_1,p_2,q_2,r_2,r_1,q_1$ are conconic if and only if the pairs of opposite sides of the hexagon meet in three collinear points $(p_1p_2)\cap(r_1r_2)$, $(p_2q_2)\cap(r_1q_1)$ and $(p_1q_1)\cap(q_2r_2)$; we get the points
\[
(\theta-\theta^q:\theta^2-\theta^{2q}:0),~(\theta^q:\theta^{2q}:\alpha),~(\theta:\theta^2:\beta)
\]
where we have set
\[
\alpha:=u\frac{\eta\theta^{2q}-\theta^q}{\eta\theta^{2}-\theta},~\beta:=u^q\frac{\eta^q\theta^{2}-\theta}{\eta^q\theta^{2q}-\theta^q}
\]
The points are collinear if and only if we have $(u\beta-u^q\alpha)\theta^3(\theta^{q-1}-\theta^{2(q-1)})=0$, if and only if $u\beta-u^q\alpha=0$ (note that $\theta^{q-1}\neq 1$). We easily check that the preceding equality gives
\[
u\in (\eta\theta^2-\theta)(\eta^q\theta^2-\theta)\F_q^\times=(n\theta^4+t\theta^3+\theta^2)\F_q^\times
\]
In other words, for our choice of the points, there are exactly $q-1$ values of $u$ in $\F_{q^3}$ such that the points $p_1,p_2,q_2,r_2,r_1,q_1$ are conconic; note that using the Frobenius action, we also get that the points $p_2,p_3,q_3,r_2,r_1,q_2$ are conconic, so as the points $p_1,p_3,q_3,r_2,r_1,q_1$.

We remark that the conic $C_1$ passing through $p_1,p_2,q_2,r_2,r_1,q_1$ must be irreducible. Else it would be the union of two lines $d$ and $d'$. If $d$ contains $r_1$, it cannot contain $r_2=r_1^\sigma$: it would be the line $Z=0$  and the other line would contain the four remaining points; this is impossible since $p_2$ does not lie on $(p_1q_1)$. Thus $d^{\sigma^3}$ contains $r_2$, $d$ is not defined over $\F_{q^3}$, and we get $d'=d^{\sigma^3}$. In this case both $d$ and $d^{\sigma^3}$ would contain $p_1,p_2,q_1,q_2$, this is impossible.

It remains to verify that one can choose $u$ such that there is no other $(-2)$-curve on the degree one surface obtained by blowing up the eight points. In other words, no three points can be collinear, no six can lie on a conic, and there is no cubic passing through the eight points which is singular at one of them.

We first check that no three of the eight points can be collinear. If such a subset contains $r_1$ and $r_2$, this is clear. If it contains $r_1$, then the line joining $r_1$ to any degree three point is defined over $\F_{q^6}$, and cannot contain any other point defined over $\F_{q^3}$. 

We are reduced to the subsets of three points among the $p_1,p_2,p_3,q_1,q_2,q_3$. Any subset of the form $\{p_i,p_j,q_k\}$ or $\{q_i,q_j,p_k\}$ with $k\in\{i,j\}$ cannot be collinear; else the corresponding line would cross one of the three irreducible conics conjugate to $C_1$ above at three points. We are reduced to consider the subsets $\{p_1,p_2,p_3\}$, $\{q_1,q_2,q_3\}$, and (up to Galois action) $\{p_1,p_2,q_3\}$, $\{q_1,q_2,p_3\}$.

Note that a point $(x:y:z)\in \P^2(\F_{q^3})$ and its conjugates over $\F_q$ are collinear if and only if the $\F_q$ subspace generated by $x,y,z$ is strictly contained in $\F_{q^3}$. As a consequence, it follows from the definition of $p_1$ that the points $p_1,p_2,p_3$ are not collinear. Now write 
\[
u=m(n\theta^4+t\theta^3+\theta^2)=m\left((1-nb-a(t-na))\theta^2-(nc+t-na)\theta-(t-na)c\right)
\]
for some $m\in \F_q^\times$. From the above criterion, we get that the points $q_1,q_2,q_3$ are collinear if and only if $t=na$ (of course we have $c\neq 0$).

The points $p_1,p_2,q_3$ are collinear if and only if
\[
\begin{vmatrix}
1 & \theta & \theta^2 \\
1 & \theta^q & \theta^{2q} \\
u^{q^2} & \theta^{q^2} & \theta^{2q^2} \\
\end{vmatrix}=0
\]
This equation is (semi)-linear in the variable $u$, and admits a unique solution.

In the same way, the points $q_1,q_2,p_3$ are collinear if and only if
\[
u(\theta^{2q^2+q}-\theta^{q^2+2q})-u^q(\theta^{2q^2+1}-\theta^{q^2+2})+\theta^{2q+1}-\theta^{q+2}=0
\]
Setting $U=u(\theta^{2q^2+q}-\theta^{q^2+2q})$, we can rewrite the equation $U+U^q+\theta^{2q+1}-\theta^{q+2}=0$.
Since $\gcd(T^q+T,T^{q^3}-T)=T$, the map $U\mapsto U^q+U$ is an $\F_q$-linear automorphism of $\F_{q^3}$, and this equation admits exactly one solution. 

Now choose six of the eight points, and assume they lie on a conic. If $r_i$ is one of these points, then among the five remaining points, four must lie on one of the conjugates of $C_1$; from Bezout theorem, the conic must be one of these, and the subset of six conconic points is among the three we already constructed. It remains to consider the six points $p_1,p_2,p_3,q_1,q_2,q_3$; any conic passing through these points should be defined over $\F_q$, et thus have an equation of the form
\[
eX^2+hXY+iY^2+jXZ+kYZ+lZ^2
\]
Plugging the coordinates of $p_1$ and $q_1$, substracting and factoring $1-u$, we get
\[
j\theta+k\theta^2+l(1+u)=0
\]
so that the family $(\theta,\theta^2,1+u)$ does not generate the $\F_q$-vector space $\F_{q^3}$. This happens if, and only if $1+mc(t-na)=0$ -- with $u=m(n\theta^4+t\theta^3+\theta^2)$.

It remains to show that there is no cubic passing through the eight points and singular at one of them. If such a cubic $C$ exists, it has exactly one singular point, which is not defined over $\F_q$. So it is distinct from $C^\sigma$. Now these two cubics have at least ten intersection points (counted with multiplicities), which is impossible.

Summing up, we have to choose $\eta$ and $\theta$ as above, such that the coefficients of their minimal polynomials satisfy $t-na\neq 0$; then we get $q-1$ possible values for $u$ such that the three prescribed subsets of six points are conconic. We remove one possible value to verify that the six points $p_i,q_i$ do not lie on a conic, then at most one for each of the assertions $p_1,p_2,q_3$ non collinear, and $q_1,q_2,p_3$ non collinear. We get a least $q-4$ possible values for $u$, and the problem is settled for $q\geq 5$.

For $q=3$, one can verify that if $\theta$ is a root of $T^3+T^2+2$, and $\eta$ a root of $T^2+T+2$, then for $m=-1$ the points defined above have the required position.

\appendix

\section{Arithmetic types for degrees three to six}
\label{gt}

We give below the lists of arithmetic types for degrees $3\leq d \leq 6$. The tables are organized as follows

\begin{itemize}
	\item in the first column, we fix a number for each type; there are also three types (one in degree four and two in degree three) for which we add an asterisk. This means that the invariant $H^1(\Gamma,\Pic(X\otimes \overline{\F}_q))$ is non trivial (it is isomorphic to the Klein four-group $\Z/2\Z\times \Z/2\Z$ in each case), and that a surface of this type is not birational to the projective plane \cite[Section 29]{manin}. 
	\item in the second one, we describe the geometric types for the given degrees, in other words the closed and symmetric parts of a root system of type $\E_{9-d}$ up to the action of the Weyl group; they come from \cite{ct}, except for degree $3$ where they are given in \cite[Table IV (iv)]{cox}. We give the Dynkin types of the singular points and (between parentheses) the number of exceptional divisors on the surface;
	\item in the third column, we give the stabilizer attached to the geometric type;
  \item in the fourth one, we give the characteristic polynomial of the action of any element $w$ in the conjugacy class of the stabilizer on the geometric Picard group $\Pic(\overline{X})$ of a weak del Pezzo surface of the type;
	\item finally, we give the characteristic polynomial of the action of any element $w$ in the conjugacy class of the stabilizer on the geometric Picard group $\Pic(\overline{X}_s)$ of a singular del Pezzo surface of a type.
\end{itemize}

Moreover, in the table for degree $4$, we add a column, in order to describe each conjugacy class in $W(\D_5)$ by its signed cycle type.

\begin{longtable}{|c|l|c|l|l|}
\caption{Arithmetic types in degree $6$}\\
\hline
\textrm{Type~} $\mathfrak{T}$ & \textrm{Geometric type} & \textrm{Stab} & $\chi_{w,\Pic(\overline{X})}$ & $\chi_{w,\Pic(\overline{X}_s)}$ \\
\hline
\hline
\endhead
\hline
\endfoot
1 & $\A_1$ ($4$) & $\Z/2\Z$ & $\Phi_1^4$ & $\Phi_1^3$ \\
2 & &  &   $\Phi_1^3\Phi_2$ & $\Phi_1^2\Phi_2$ \\
\hline
3 & $\A_1$ ($3$) & $\S_3$ & $\Phi_1^4$ & $\Phi_1^3$ \\
4 &   &  & $\Phi_1^3\Phi_2$ & $\Phi_1^2\Phi_2$ \\
5 &   &  & $\Phi_1^2\Phi_3$ & $\Phi_1\Phi_3$ \\
\hline
6 & $2\A_1$ ($2$) & $\{e\}$ & $\Phi_1^4$ & $\Phi_1^2$ \\
\hline
7 & $\A_2$ ($2$) & $\Z/2\Z$ & $\Phi_1^4$ & $\Phi_1^2$ \\
8 & &  &   $\Phi_1^3\Phi_2$ & $\Phi_1\Phi_2$ \\
\hline
9 & $\A_2\A_1$ ($1$) & $\{e\}$ & $\Phi_1^4$ & $\Phi_1$ \\
\hline
\end{longtable}

\begin{longtable}{|c|l|c|l|l|}
\caption{Arithmetic types in degree $5$}\\
\hline
\textrm{Type~} $\mathfrak{T}$ & \textrm{Geometric type} & \textrm{Stab} & $\chi_{w,\Pic(\overline{X})}$ & $\chi_{w,\Pic(\overline{X}_s)}$ \\
\hline
\hline
\endhead
\hline
\endfoot
1 & $\A_1$ ($7$) & $\S_3$ & $\Phi_1^5$ & $\Phi_1^4$ \\
2 & &    & $\Phi_1^4\Phi_2$ & $\Phi_1^3\Phi_2$ \\
3 & &    & $\Phi_1^3\Phi_3$ & $\Phi_1^2\Phi_3$ \\
\hline
4 & $2\A_1$ ($5$) & $\Z/2\Z$ & $\Phi_1^5$ & $\Phi_1^3$ \\
5 & &  &   $\Phi_1^3\Phi_2^2$ & $\Phi_1^2\Phi_2$ \\
\hline
6 & $\A_2$ ($4$) & $\Z/2\Z$ & $\Phi_1^5$ & $\Phi_1^3$ \\
7 & &  &   $\Phi_1^4\Phi_2$ & $\Phi_1^2\Phi_2$ \\
\hline
8 & $\A_2\A_1$ ($3$) & $\{e\}$ & $\Phi_1^5$ & $\Phi_1^2$ \\
\hline
9 & $\A_3$ ($2$)& $\{e\}$ & $\Phi_1^5$ & $\Phi_1^2$ \\
\hline
10 & $\A_4$ ($1$) & $\{e\}$ & $\Phi_1^5$ & $\Phi_1$ \\
\hline
\end{longtable}

\begin{longtable}{|c|l|c|l|l|l|}
\caption{Arithmetic types in degree $4$}\\
\hline
Type~ $\mathfrak{T}$ & Geometric type  & Stab & $W(\D_5)$ & $\chi_{w,\Pic(\overline{X})}$ & $\chi_{w,\Pic(\overline{X}_s)}$ \\
\hline
\hline
\endhead
\hline
\endfoot
1 & $\A_1$ ($12$) & $\S_4\times \Z/2\Z$ & $11111$ &  $\Phi_1^6$ & $\Phi_1^5$ \\
2 & &    & $2111$ & $\Phi_1^5\Phi_2$ &  $\Phi_1^4\Phi_2$ \\
3 & &    & $2111$ & $\Phi_1^5\Phi_2$ &  $\Phi_1^4\Phi_2$ \\
4 & &    & $111\overline{11}$ & $\Phi_1^4\Phi_2^2$ &  $\Phi_1^3\Phi_2^2$ \\
5 & &    & $221$ & $\Phi_1^4\Phi_2^2$ &  $\Phi_1^3\Phi_2^2$ \\
6 & &    & $21\overline{11}$ & $\Phi_1^3\Phi_2^3$ &  $\Phi_1^2\Phi_2^3$ \\
7 & &    & $311$ & $\Phi_1^4\Phi_3$ & $\Phi_1^3\Phi_3$ \\
8 & &    & $2\overline{21}$ & $\Phi_1^2\Phi_2^2\Phi_4$ & $\Phi_1\Phi_2^2\Phi_4$ \\
9 & &    & $11\overline{21}$ & $\Phi_1^3\Phi_2\Phi_4$ & $\Phi_1^2\Phi_2\Phi_4$ \\
10 &   &  & $32$ & $\Phi_1^3\Phi_2\Phi_3$ & $\Phi_1^2\Phi_2\Phi_3$ \\
\hline

11 & $2\A_1$ ($9$) & $D_8$ & $11111$ & $\Phi_1^6$ & $\Phi_1^4$ \\
12 & &    & $2111$ & $\Phi_1^5\Phi_2$ & $\Phi_1^3\Phi_2$ \\
13 & &    & $221$ & $\Phi_1^4\Phi_2^2$ & $\Phi_1^2\Phi_2^2$ \\
14 & &    & $221$ & $\Phi_1^4\Phi_2^2$ & $\Phi_1^3\Phi_2$ \\
15 & &    & $41$ &$\Phi_1^3\Phi_2\Phi_4$ & $\Phi_1^2\Phi_4$ \\
\hline
16 & $2\A_1$ ($8$) & $\S_4\times \Z/2\Z$ & $11111$ & $\Phi_1^6$ & $\Phi_1^4$ \\
17 & &  &   $2111$ &$\Phi_1^5\Phi_2$ & $\Phi_1^3\Phi_2$ \\
18 & &  &   $111\overline{11}$ &$\Phi_1^4\Phi_2^2$ & $\Phi_1^2\Phi_2^2$ \\
19 & &  &   $111\overline{11}$ &$\Phi_1^4\Phi_2^2$ & $\Phi_1^3\Phi_2$ \\
20 & &  &   $21\overline{11}$ &$\Phi_1^3\Phi_2^3$ & $\Phi_1^2\Phi_2^2$ \\
21$^\ast$  &  &  & $1\overline{1111}$ &$\Phi_1^2\Phi_2^4$ & $\Phi_1\Phi_2^3$ \\
22 & &    & $311$ &$\Phi_1^4\Phi_3$ & $\Phi_1^2\Phi_3$ \\
23 & &    & $\overline{2}11\overline{1}$ &$\Phi_1^3\Phi_2\Phi_4$ & $\Phi_1^2\Phi_4$ \\
24 & &    & $\overline{2}11\overline{1}$ &$\Phi_1^3\Phi_2\Phi_4$ & $\Phi_1\Phi_2\Phi_4$ \\
25 & &    & $\overline{3}1\overline{1}$ &$\Phi_1^2\Phi_2^2\Phi_6$ & $\Phi_1\Phi_2\Phi_6$ \\
\hline
26 & $\A_2$ ($8$) & $D_8$ & $11111$ & $\Phi_1^6$ & $\Phi_1^4$ \\
27 & &  &   $2111$ & $\Phi_1^5\Phi_2$ & $\Phi_1^3\Phi_2$ \\
28 & &  &   $111\overline{11}$ & $\Phi_1^4\Phi_2^2$ & $\Phi_1^2\Phi_2^2$ \\
29 & &  &   $21\overline{11}$ & $\Phi_1^3\Phi_2^3$ & $\Phi_1^2\Phi_2^2$ \\
30 & &  &   $2\overline{21}$ & $\Phi_1^2\Phi_2^2\Phi_4$ & $\Phi_1\Phi_2\Phi_4$ \\
\hline
31 & $3\A_1$ ($6$) & $(\Z/2\Z)^2$ & $11111$ & $\Phi_1^6$ & $\Phi_1^3$ \\
32 & &  &   $2111$ & $\Phi_1^5\Phi_2$ & $\Phi_1^2\Phi_2$ \\
33 & &  &   $111\overline{11}$ & $\Phi_1^4\Phi_2^2$ & $\Phi_1^2\Phi_2$ \\
34 & &  &  $ 21\overline{11}$ & $\Phi_1^3\Phi_2^3$ & $\Phi_1\Phi_2^2$ \\
\hline
35 & $\A_1\A_2$ ($6$) & $\Z/2\Z$ & $11111$ & $\Phi_1^6$ & $\Phi_1^3$ \\
36 & &  &   $2111$ & $\Phi_1^5\Phi_2$ & $\Phi_1^2\Phi_2$ \\
\hline
37 & $\A_3$  ($5$) & $\Z/2\Z$ & $11111$ & $\Phi_1^6$ & $\Phi_1^3$ \\
38 & &  &   $221$ & $\Phi_1^4\Phi_2^2$ & $\Phi_1^2\Phi_2$ \\
\hline
39 & $\A_3$  ($4$) & $D_8$ & $11111$ & $\Phi_1^6$ & $\Phi_1^3$ \\
40 & &  &   $2111$ & $\Phi_1^5\Phi_2$ & $\Phi_1^2\Phi_2$ \\
41 & &  &   $111\overline{11}$ & $\Phi_1^4\Phi_2^2$ & $\Phi_1\Phi_2^2$ \\
42 & &  &   $111\overline{11}$ & $\Phi_1^4\Phi_2^2$ & $\Phi_1^2\Phi_2$ \\
43 & &  &   $\overline{2}11\overline{1}$ & $\Phi_1^3\Phi_2\Phi_4$ & $\Phi_1\Phi_4$ \\
\hline
44 & $4\A_1$ ($4$) & $D_8$ & $11111$ & $\Phi_1^6$ & $\Phi_1^2$ \\
45 & &  &   $111\overline{11}$ & $\Phi_1^4\Phi_2^2$ & $\Phi_1^2$ \\
46 & &  &   $221$ & $\Phi_1^4\Phi_2^2$ & $\Phi_1^2$ \\
47 & &  &   $111\overline{11}$ & $\Phi_1^4\Phi_2^2$ & $\Phi_1\Phi_2$ \\
48 & &  &   $2\overline{21}$ & $\Phi_1^2\Phi_2^2\Phi_4$ & $\Phi_1\Phi_2$ \\
\hline
49 & $2\A_1\A_2$  ($4$) & $\Z/2\Z$ & $11111$ & $\Phi_1^6$ & $\Phi_1^2$ \\
50 & &  &   $21\overline{11}$ & $\Phi_1^3\Phi_2^3$ & $\Phi_1\Phi_2$ \\
\hline
51 & $\A_1\A_3$ ($3$) & $\Z/2\Z$ & $11111$ & $\Phi_1^6$ & $\Phi_1^2$ \\
52 & &  &   $2111$ & $\Phi_1^5\Phi_2$ & $\Phi_1\Phi_2$ \\
\hline
53 & $\A_4$ ($3$) & $\{e\}$ & $11111$ & $\Phi_1^6$ & $\Phi_1^2$ \\
\hline
54 & $\D_4$ ($2$)& $\Z/2\Z$ & $11111$ & $\Phi_1^6$ & $\Phi_1^2$ \\
55 & &  &   $111\overline{11}$ & $\Phi_1^4\Phi_2^2$ & $\Phi_1\Phi_2$ \\
\hline
56 & $2\A_1\A_3$ ($2$) & $\Z/2\Z$ & $11111$ & $\Phi_1^6$ & $\Phi_1$ \\
57 & &  &   $111\overline{11}$ & $\Phi_1^4\Phi_2^2$ & $\Phi_1$ \\
\hline
58 & $\D_5$ ($1$) & $\{e\}$ & $11111$ & $\Phi_1^6$ & $\Phi_1$ \\
\hline
\end{longtable}

\begin{longtable}{|c|l|c|l|l|}
\caption{Arithmetic types in degree $3$}\\
\hline
\textrm{Type~} $\mathfrak{T}$ & \textrm{Geometric type} & \textrm{Stab} & $\chi_{w,\Pic(\overline{X})}$ & $\chi_{w,\Pic(\overline{X}_s)}$ \\
\hline
\hline
\endhead
\hline
\endfoot
1 & $\A_1$  ($21$) & $\S_6$ & $\Phi_1^7$ & $\Phi_1^6$ \\
2 & &  &   $\Phi_1^6\Phi_2$ & $\Phi_1^5\Phi_2$ \\
3 & &  &   $\Phi_1^5\Phi_2^2$ & $\Phi_1^4\Phi_2^2$ \\
4 & &  &   $\Phi_1^4\Phi_2^3$ & $\Phi_1^3\Phi_2^3$ \\
5 & &  &   $\Phi_1^3\Phi_3^2$ & $\Phi_1^2\Phi_3^2$ \\
6 & &  &   $\Phi_1^5\Phi_3$ & $\Phi_1^4\Phi_3$ \\
7 & &  &   $\Phi_1^4\Phi_2\Phi_3$ & $\Phi_1^3\Phi_2\Phi_3$ \\
8 & &  &   $\Phi_1^3\Phi_2^2\Phi_4$ & $\Phi_1^2\Phi_2^2\Phi_4$ \\
9 & &  &   $\Phi_1^4\Phi_2\Phi_4$ & $\Phi_1^3\Phi_2\Phi_4$ \\
10 & &  &   $\Phi_1^3\Phi_5$ & $\Phi_1^2\Phi_5$ \\
11 & &  &   $\Phi_1^2\Phi_2\Phi_3\Phi_6$ & $\Phi_1\Phi_2\Phi_3\Phi_6$ \\
\hline
12 & $2\A_1$ ($16$) & $\S_4\times \Z/2\Z$ & $\Phi_1^7$ & $\Phi_1^5$ \\
13 & &  &   $\Phi_1^6\Phi_2$ & $\Phi_1^4\Phi_2$ \\
14 & &  &   $\Phi_1^5\Phi_2^2$ & $\Phi_1^4\Phi_2$ \\
15 & &  &   $\Phi_1^5\Phi_2^2$ & $\Phi_1^3\Phi_2^2$ \\
16 & &  &   $\Phi_1^4\Phi_2^3$ & $\Phi_1^3\Phi_2^2$ \\
17$^\ast$  &  &  & $\Phi_1^3\Phi_2^4$ & $\Phi_1^2\Phi_2^3$ \\
18 & &  &   $\Phi_1^5\Phi_3$ & $\Phi_1^3\Phi_3$ \\
19 & &  &   $\Phi_1^4\Phi_2\Phi_4$ & $\Phi_1^3\Phi_4$ \\
20 & &  &   $\Phi_1^4\Phi_2\Phi_4$ & $\Phi_1^2\Phi_2\Phi_4$ \\
21 & &  &   $\Phi_1^3\Phi_2^2\Phi_6$ & $\Phi_1^2\Phi_2\Phi_6$ \\
\hline
22 & $\A_2$ ($15$) & $\S_3\wr \Z/2\Z$ & $\Phi_1^7$ & $\Phi_1^5$ \\
23 & &  &   $\Phi_1^6\Phi_2$ & $\Phi_1^4\Phi_2$ \\
24 & &  &   $\Phi_1^5\Phi_2^2$ & $\Phi_1^3\Phi_2^2$ \\
25 & &  &   $\Phi_1^4\Phi_2^3$ & $\Phi_1^3\Phi_2^2$ \\
26 & &  &   $\Phi_1^5\Phi_3$ & $\Phi_1^3\Phi_3$ \\
27 & &  &   $\Phi_1^3\Phi_3^2$ & $\Phi_1\Phi_3^2$ \\
28 & &  &   $\Phi_1^3\Phi_2^2\Phi_4$ & $\Phi_1^2\Phi_2\Phi_4$ \\
29 & &  &   $\Phi_1^4\Phi_2\Phi_3$ & $\Phi_1^2\Phi_2\Phi_3$ \\
30 & &  &   $\Phi_1^2\Phi_2\Phi_3\Phi_6$ & $\Phi_1\Phi_3\Phi_6$ \\
\hline
31 & $3\A_1$ ($12$) & $D_{12}$ & $\Phi_1^7$ & $\Phi_1^4$ \\
32 & &  &   $\Phi_1^6\Phi_2$ & $\Phi_1^3\Phi_2$ \\
33 & &  &   $\Phi_1^5\Phi_2^2$ & $\Phi_1^5\Phi_2$ \\
34 & &  &   $\Phi_1^4\Phi_2^3$ & $\Phi_1^2\Phi_2^2$ \\
35 & &  &   $\Phi_1^3\Phi_3^2$ & $\Phi_1^2\Phi_3$ \\
36 & &  &   $\Phi_1^2\Phi_2\Phi_3^2$ & $\Phi_1\Phi_2\Phi_3$ \\
\hline
37 & $\A_1\A_2$ ($11$) & $\S_3$ & $\Phi_1^7$ & $\Phi_1^4$ \\
38 & &  &   $\Phi_1^6\Phi_2$ & $\Phi_1^3\Phi_2$ \\
39 & &  &   $\Phi_1^5\Phi_3$ & $\Phi_1^2\Phi_3$ \\
\hline
40 & $\A_3$ ($10$) & $D_8$ & $\Phi_1^7$ & $\Phi_1^4$ \\
41 & &  &   $\Phi_1^6\Phi_2$ & $\Phi_1^3\Phi_2$ \\
42 & &  &   $\Phi_1^5\Phi_2^2$ & $\Phi_1^3\Phi_2$ \\
43 & &  &   $\Phi_1^5\Phi_2^2$ & $\Phi_1^2\Phi_2^2$ \\
44 & &  &   $\Phi_1^4\Phi_2\Phi_4$ & $\Phi_1^2\Phi_4$ \\
\hline
45 & $4\A_1$ ($9$)& $\S_4$ & $\Phi_1^7$ & $\Phi_1^3$ \\
46 & &  &   $\Phi_1^5\Phi_2^2$ & $\Phi_1^3$ \\
47 & &  &   $\Phi_1^5\Phi_2^2$ & $\Phi_1^2\Phi_2$ \\
48 & &  &   $\Phi_1^3\Phi_3^2$ & $\Phi_1\Phi_3$ \\
49 & &  &   $\Phi_1^3\Phi_2^2\Phi_4$ & $\Phi_1^2\Phi_2$ \\
\hline
50 & $2\A_1\A_2$ ($8$)& $\Z/2\Z$ & $\Phi_1^7$ & $\Phi_1^3$ \\
51 & &  &   $\Phi_1^4\Phi_2^3$ & $\Phi_1^2\Phi_2$ \\
\hline
52 & $\A_1\A_3$ ($7$)& $\Z/2\Z$ & $\Phi_1^7$ & $\Phi_1^3$ \\
53 &   &  & $\Phi_1^6\Phi_2$ & $\Phi_1^2\Phi_2$ \\
\hline
54 & $2\A_2$ ($7$)& $D_{12}$ & $\Phi_1^7$ & $\Phi_1^3$ \\
55 & &  &   $\Phi_1^6\Phi_2$ & $\Phi_1^2\Phi_2$ \\
56 & &  &   $\Phi_1^4\Phi_2^3$ & $\Phi_1^2\Phi_2$ \\
57$^\ast$  &  &  & $\Phi_1^3\Phi_2^4$ & $\Phi_1\Phi_2^2$ \\
58 & &  &   $\Phi_1^5\Phi_3$ & $\Phi_1\Phi_3$ \\
59 & &  &   $\Phi_1^3\Phi_2^2\Phi_6$ & $\Phi_1\Phi_6$ \\
\hline
60 & $\A_4$ ($6$) & $\Z/2\Z$ & $\Phi_1^7$ & $\Phi_1^3$ \\
61 & &  &   $\Phi_1^6\Phi_2$ & $\Phi_1^2\Phi_2$ \\
\hline
62 & $\D_4$ ($6$) & $\S_3$ & $\Phi_1^7$ & $\Phi_1^3$ \\
63 & &  &   $\Phi_1^5\Phi_2^2$ & $\Phi_1^2\Phi_2$ \\
64 & &  &   $\Phi_1^3\Phi_3^2$ & $\Phi_1\Phi_3$ \\
\hline
65 & $\A_1 2\A_2$ ($5$) & $\Z/2\Z$ & $\Phi_1^7$ & $\Phi_1^2$ \\
66 & &  &   $\Phi_1^4\Phi_2^3$ & $\Phi_1\Phi_2$ \\
\hline
67 & $2\A_1 \A_3$ ($5$) & $\Z/2\Z$ & $\Phi_1^7$ & $\Phi_1^2$ \\
68 & &  &   $\Phi_1^5\Phi_2^2$ & $\Phi_1^2$ \\
\hline
69 & $\A_1 \A_4$ ($4$) & $\{e\}$ & $\Phi_1^7$ & $\$Phi_1^3$ \\
\hline
70 & $\A_5$ ($3$) & $\Z/2\Z$ & $\Phi_1^7$ & $\Phi_1^2$ \\
71 & &    & $\Phi_1^6\Phi_2$ & $\Phi_1\Phi_2$ \\
\hline
72 & $\D_5$ ($3$) & $\{e\}$ & $\Phi_1^7$ & $\Phi_1^2$ \\
\hline
73 & $\A_1 \A_5$  ($2$) & $\{e\}$ & $\Phi_1^7$ & $\Phi_1$ \\
\hline
74 & $3\A_2$  ($3$) & $\S_3$ & $\Phi_1^7$ & $\Phi_1$ \\
75 & &    & $\Phi_1^4\Phi_2^3$ & $\Phi_1$ \\
76 & &    & $\Phi_1^3\Phi_3^2$ & $\Phi_1$ \\
\hline
77 & $\E_6$  ($1$) & $\{e\}$ & $\Phi_1^7$ & $\Phi_1$ \\
\hline
\end{longtable}

\bibliographystyle{amsplain}

\bibliography{SingularDelPezzo}

\end{document}